\documentclass[]{amsart}
\usepackage{amsmath, amsfonts, amssymb, amsthm, mathrsfs, mathtools}
\usepackage[a4paper, total={6in, 8in}]{geometry}
\usepackage{hyperref}
\usepackage{xcolor}
\usepackage{soul}
\usepackage{graphicx}
\usepackage{caption,subcaption,comment}
\usepackage{tikz}
\usepackage{pgfplots}
\usepackage{adjustbox}
\usepackage{tabularx}

\allowdisplaybreaks 

\hypersetup{
    colorlinks,
    linkcolor={blue!80},
    citecolor={blue!80},
    urlcolor={blue!80}
}

\usepackage{lineno}
\usepackage{todonotes}

\newcommand{\grado}{\nabla_{\Omega}}
\newcommand{\lapo}{\Delta_{\Omega}}
\newcommand{\gradoh}{\nabla_{\Omega_h}}
\newcommand{\matdev}{\partial^\bullet}

\newcommand{\diam}{\operatorname{diam}}
\newcommand{\esssup}[1]{\underset{#1}{\operatorname{ess\,sup}}\,}
\newcommand{\Ph}{\mathscr{P}_h}
\newcommand{\Eh}{\mathscr{E}_h}
\newcommand{\dt}{\mathrm{d}t}
\newcommand{\ddt}[1]{\frac{\mathrm{d}#1}{\mathrm{d}t}}
\newcommand{\G}{\mathcal{G}}
\newcommand{\Gh}{\mathcal{G}_h}
\newcommand{\GSh}{\mathcal{G}_{S_h}}

\newcommand{\angled}[1]{\left \langle #1 \right \rangle}

\newcommand{\mbb}[1]{\mathbb{#1}}
\newcommand{\mbf}[1]{\mathbf{#1}}
\newcommand{\codeletter}[1]{\boldsymbol{\mathsf{#1}}}

\newcommand{\Xint}[1]{\mathchoice
	{\XXint\displaystyle\textstyle{#1}}%
	{\XXint\textstyle\scriptstyle{#1}}%
	{\XXint\scriptstyle\scriptscriptstyle{#1}}%
	{\XXint\scriptscriptstyle\scriptscriptstyle{#1}}%
	\!\int}
\newcommand{\XXint}[3]{{\setbox0=\hbox{$#1{#2#3}{\int}$ }
		\vcenter{\hbox{$#2#3$ }}\kern-.6\wd0}}

\newcommand{\dashint}{\Xint-}
\newcommand{\mval}[2]{\dashint_{#2} #1}

\pgfplotsset{compat = 1.18} 

\title[Finite element approximation of the enthalpy formulation on evolving surfaces]{Finite element approximation of the enthalpy formulation for Stefan problems on evolving surfaces}
\author[P.~J.~Herbert]{Philip~J.~Herbert}
\author[T.~Sales]{Thomas~Sales} 
\author[C.~Venkataraman]{Chandrasekhar~Venkataraman}
\address{Department of Mathematics, University of Sussex, BN1 9RF Brighton, United Kingdom}
\email{\href{mailto:p.herbert@sussex.ac.uk}{p.herbert@sussex.ac.uk}, \href{mailto:t.p.sales@sussex.ac.uk}{t.p.sales@sussex.ac.uk}, \href{mailto:c.venkataraman@sussex.ac.uk}{c.venkataraman@sussex.ac.uk}}

\subjclass[2020]{65M60, 65M15, 35R35, 80A22}
\keywords{Stefan problem, free boundary problem, evolving surface finite elements, enthalpy formulation, degenerate PDE}

\newtheorem{theorem}{Theorem}[section]

\newtheorem{lemma}[theorem]{Lemma}
\newtheorem{definition}[theorem]{Definition}

\newtheorem{remark}[theorem]{Remark}
\newtheorem{assumption}{Assumption}
\newtheorem*{example}{Example}

\numberwithin{equation}{section}

\begin{document}

\begin{abstract}
	We propose, and analyse, a spatially discrete evolving surface finite element method for the approximation of the enthalpy formulation of the two-phase Stefan problem posed on an evolving surface.
    Our approach does not rely on mass-lumping and discrete maximum principles.
    We prove this numerical method is numerically stable, and prove $\mathcal{O}(\sqrt{h})$ error bounds for the temperature in the $L^2_{L^2}$ norm under minimal regularity assumptions by introducing a new projection-type operator.
    We complement our analysis with discussion on the implementation of this numerical method, where we propose a novel implementation that avoids errors due to numerical quadrature and which has not previously been considered in the literature even in the stationary, flat setting.
    We also include numerical experiments and experimental order of convergence demonstrations.
\end{abstract}

\maketitle

\section{Introduction}
\label{section: introduction}
We are interested in the analysis of evolving surface finite element methods (ESFEM) for two-phase Stefan problems posed on an evolving surface, $\Omega(t) \subset \mbb{R}^3$, moving with a prescribed velocity, $\mbf{V}$.
In particular, we consider the so-called \emph{enthalpy formulation} of the Stefan problem,
\begin{subequations}
\label{eqn: stefan problem pair}
\begin{gather}
	\matdev e + e (\grado \cdot \mbf{V}) - \lapo u = f, \text{ on } \Omega(t), \label{eqn: stefan problem}\\
	 e \in \beta(u), \label{eqn: stefan inclusion}
\end{gather}
\end{subequations}
with initial condition $e(0) = e_0 \in L^2(\Omega(0))$ and where the enthalpy $\beta$ is a set-valued map, as discussed in Section~\ref{section: stefan problem on evolving surface}.
Here the function $e$ is the enthalpy function (i.e.~the heat content) and $u$ is the temperature distribution.
We shall assume that $\Omega(t)$ is a sufficiently smooth surface without boundary, and hence there are no boundary conditions associated with~\eqref{eqn: stefan problem pair}.
We defer discussion of the differential operators used in~\eqref{eqn: stefan problem pair} until Section~\ref{section: stefan problem on evolving surface}.

Problems of the form \eqref{eqn: stefan problem pair} were proposed and analysed in~\cite{Alphonse2015Stefan}, wherein the authors study the well-posedness for suitably defined solutions.
Stefan-type problems posed on a surface appear in nature as models of phase transitions on surfaces, for example when a soap bubble freezes~\cite{Ahmadi2019SoapBubbles,Garcke2024CrystalGrowth}, in free boundary limits of bulk-surface models of ligand-receptor dynamics~\cite{Alphonse2024FreeBoundaryLimit,Elliott2017CoupledFreeBoundary}, and in models of cell-polarisation~\cite{Logioti2021CellPolarization,Niethammer2020CellPolarization}.
There is also interest in industrial applications of Stefan problems on surfaces, such as  welding~\cite{Colera2025ComparisionEnthalpy,Nedjar2002NonlinearHeat,Nehad1995KeyholePlasma} and aircraft icing~\cite{Guardone2025Aircraft,Peters2024IceCrystal},  which can be modelled as Stefan problems on surfaces (which may also be deforming).
This enthalpy formulation provides a generalised notion of solution for the Stefan problem and has been used extensively since initial work in the 1960s by Ole\u{\i}nik~\cite{Oleinik1960GeneralStefan}, Kamenomostskaja~\cite{Kamenomostskaja1961Stefan}, and Friedman~\cite{Friedman1968Stefan}.
This weak notion of solution allows one to implicitly capture the evolution of the free boundary, $\Gamma(t)$ as illustrated in Figure~\ref{fig: stefan diagram}, through the definition of the graph $\beta$ (sometimes referred to as the generalised enthalpy~\cite{Elliott1982MovingBoundaryProblems}), instead of explicitly capturing the free boundary, as in front-tracking methods~\cite{Chen1997LevelSetMethod}.
We note that sufficiently regular enthalpy solutions do indeed solve the more standard strong formulation of the Stefan problem~\eqref{eqn: strong stefan}, provided the free boundary, $\Gamma(t)$, does not develop an interior (a so-called mushy region), cf.~\cite[Remark 2.12]{Alphonse2015Stefan}.
We refer the reader to~\cite{Crowley1979MovingBoundary,Elliott1982MovingBoundaryProblems,Gupta2003Stefan,Rodrigues1989StefanRevisted,Rubenstein1971StefanProblem} for further details on the enthalpy formulation of the Stefan problem, as well as \cite{DiPietro2015AdaptiveStefan,Elliott1987StefanError,Nochetto1985StefanLinear,Nochetto1985StefanNonlinear,Nochetto1991ParabolicFBPs,Nochetto2000AdaptivityDegenerate, Nochetto1988Degenerate} for results on finite element approximations of the two-phase Stefan problem.
To our knowledge, there is no literature concerning the numerical approximation of Stefan-type problems posed on an evolving surface.
We do however, refer the reader to recent work by Garcke and N\"urnberg \cite{Garcke2024CrystalGrowth} where they study a free boundary problem governing anisotropic crystal growth on stationary surfaces.
The corresponding system is a surface Stefan problem with surface tension and kinetic undercooling, and the authors propose a finite element approximation in the style of Barrett--Garcke--N\"urnberg type discretisations, cf.~\cite{Barrett2020ParametricFEM}.

The ESFEM developed by Dziuk and Elliott  \cite{dziuk2007finite} is a robust and efficient numerical method for the solution of PDEs posed on evolving surfaces. The method has been applied to a number of different equations and its analysis is a burgeoning area of current numerical analysis research, see e.g., \cite{Dziuk2013SurfacePDEs} for a review.  Despite their relevance in analysis and applications \cite{dibenedetto2012degenerate} the  numerical approximation of degenerate parabolic PDEs on evolving surfaces with the ESFEM or other methods  has, to our best knowledge, not been considered previously. This work is therefore, an important step in this direction and is expected to have significant impact beyond the specific case of the Stefan problem on an evolving surface which is our primary focus, cf.~Remark~\ref{remark: new l2 projection}.\\

\textbf{Main contributions:}
\begin{itemize}
    \item Our main contribution (Theorem~\ref{thm: error theorem}) is an error bound for a semi-discrete ESFEM approximation of~\eqref{eqn: stefan problem pair}.
    Our approach does not rely on mass-lumping techniques, and obtains the same order error as in the stationary, flat setting~\cite{Elliott1987StefanError,Nochetto1988Degenerate}.
    \item We introduce a new  projection-type operator (Definition~\ref{defn: new L2 projection}), which enables us to avoid the strong (and in this case infeasible) regularity assumptions often required in ESFEM error analysis~\cite{Elliott2021EvolvingFiniteElement}.
    We believe this approach will be useful in the analysis of ESFEM for other degenerate problems, cf.~Remark~\ref{remark: new l2 projection}.
    \item We introduce a new numerical method for solving the fully discrete problem by using an ``exact discretisation'' that avoids errors due to numerical quadrature under the assumption that $\beta$ is piecewise polynomial.
    This numerical method is also applicable and novel in the stationary, flat setting.
    \end{itemize}

\textbf{Outline:}
The outline of this paper is as follows.
In Section~\ref{section: stefan problem on evolving surface} we discuss the two-phase Stefan problem on an evolving surface and state our assumptions.
Section~\ref{section: evolving surfaces} recalls relevant material on evolving surfaces, and the evolving function space theory developed in~\cite{Alphonse2023EvolvingBanach,Alphonse2015Abstract}.
Section~\ref{section: ESFEM} recalls aspects of the evolving surface finite element method (ESFEM) of Dziuk and Elliott~\cite{Dziuk2013SurfacePDEs}, and states some previous results to be used in our subsequent analysis.
In Section~\ref{section: semidiscrete} we propose a spatially discrete numerical method for the approximation of \eqref{eqn: stefan problem pair} for which we prove stability (Lemma~\ref{lemma: semidiscrete stability}), continuous dependence on the data (Lemma~\ref{lemma: semidiscrete cts dependence}), and error bounds (Theorem~\ref{thm: error theorem}) under minimal regularity assumptions by introducing a new projection-type operator (Definition~\ref{defn: new L2 projection}).
To the authors' knowledge, this is the first error bound for an approximation of a free boundary problem posed on an evolving surface.
In Section~\ref{section: numerics} we propose a fully discrete numerical method and describe a novel implementation that avoids errors due to numerical quadrature of the method. This implementation has not previously been considered in the literature, even in the flat, stationary setting.
Our numerical experiments indicate this method performs similarly to quadrature-based methods.
This section is closed with numerical experiments illustrating solution behaviour that can only arise on an evolving domain, and tests that demonstrate the experimental order of convergence of the method.

\section{The Stefan problem on an evolving surface}
\label{section: stefan problem on evolving surface}
We shall assume throughout that $\Omega(t)$ is a $C^2$ evolving surface with known surface evolution, i.e.~the surface evolution is independent of the solution to~\eqref{eqn: stefan problem pair}.
By $\matdev$, we denote the material time derivative following the flow of $\mbf{V}$, $\grado$ denotes the tangential gradient on $\Omega(t)$, and $\lapo$ denotes the Laplace--Beltrami operator on $\Omega(t)$.
Note that due to the surface evolution the operators $\grado$ and $\lapo$ vary in time, but we shall ignore this in our notation.
We refer the reader to~\cite{Dziuk2013SurfacePDEs} for further details on these differential operators.
We employ notion of the form $L^p_{X}$, and $H^1_{X}$, for evolving Bochner spaces, and evolving Sobolev--Bochner spaces respectively, where $\{X(t)\}_{t \in [0,T]}$ is a family of time-dependent Banach spaces.
This is defined in greater detail in Section~\ref{section: evolving surfaces}.

\subsection{Weak formulation}
Given data $e_0 \in L^\infty(\Omega(0))$ and $f \in L^\infty_{L^\infty}$, one seeks a \emph{weak solution} pair $(e, u) \in L^{\infty}_{L^\infty} \times L^2_{H^1}$ solving \eqref{eqn: stefan problem}, subject to the inclusion \eqref{eqn: stefan inclusion}, in the sense that
\begin{align*}
    -\int_0^T \int_{\Omega(t)} e \matdev \eta + \int_0^T \int_{\Omega(t)} \grado u \cdot \grado \eta = \int_0^T \int_{\Omega(t)} f \eta + \int_{\Omega(0)} e_0 \eta(0), \quad \text{such that} \quad e \in \beta(u),
\end{align*}
for all $\eta \in L^2_{H^1} \cap H^1_{L^2}$ with $\eta(T) = 0$.
We defer discussion on the function spaces used here to Section~\ref{section: evolving surfaces}.
Here $\beta: \mbb{R} \rightrightarrows \mbb{R}$ is a set-valued map which satisfies the following assumptions.
\begin{assumption}[Admissible enthalpy functions]
	\label{assumption: enthalpy}
	We assume that $\beta: \mbb{R} \rightrightarrows \mbb{R}$ is such that:
	\begin{enumerate}
		\item[(A1)] There exists a function $\mathcal{U}: \mbb{R} \rightarrow \mbb{R}$ such that $\mathcal{U}(\beta(r)) = r$ for all $r \in \mbb{R}$.
		\item[(A2)] The inverse function, $\mathcal{U}(\cdot)$, is Lipschitz continuous and monotonically increasing (i.e.~nondecreasing).
        We shall denote the Lipschitz constant of $\mathcal{U}$ as $C_\mathcal{U}$.
		\item[(A3)] The set-valued map $\beta: \mbb{R} \rightrightarrows \mbb{R}$ is \emph{maximal monotone} in the sense that \[ (R_1 - R_2)(r_1 - r_2) \geq 0, \]
		for all $r_1, r_2 \in \mbb{R}$ and $R_i \in \beta(r_i)$ for $i=1,2$, and there exists no extension of $\beta$ which is monotone.
		\item[(A4)] The map $\beta: \mbb{R} \rightrightarrows \mbb{R}$ is \emph{strongly monotone} in the sense that there exists a constant $C_\beta > 0$ such that
		\[ (R_1 - R_2)(r_1 - r_2) \geq C_\beta (r_1 - r_2)^2, \]
		for all $r_1, r_2 \in \mbb{R}$ and $R_i \in \beta(r_i)$ for $i=1,2$.
		Equivalently this may be expressed as
		\begin{align}
			(\mathcal{U}(r_1) - \mathcal{U}(r_2))(r_1 - r_2) \geq C_\beta (\mathcal{U}(r_1) - \mathcal{U}(r_2))^2, \label{eqn: beta strong monotonicity}
		\end{align}
		for all $r_1, r_2 \in \mbb{R}$.
	\end{enumerate}
\end{assumption}

By using the inverse function $\mathcal{U}$ one finds that $u = \mathcal{U}(e)$, and hence one may equivalently write \eqref{eqn: stefan problem pair} as a single equation
\begin{align}
	\matdev e + e (\grado \cdot \mbf{V}) - \lapo \mathcal{U}(e) = f, \text{ on } \Omega(t). \label{eqn: stefan problem2}
\end{align}
In this setting one readily observes that since $\mathcal{U}'(\cdot) \geq 0$ this equation is a \emph{degenerate, quasilinear, parabolic equation} for $e$.
Using~\eqref{eqn: stefan problem2} one can now define a weak solution to~\eqref{eqn: stefan problem2} to be a function $e \in L^2_{H^{-1}} \cap L^\infty_{L^2}$ such that $\mathcal{U}(e) \in L^2_{H^1}$ and
\begin{align}
    \angled{\matdev e, \phi}_{H^{-1}(\Omega(t)) \times H^{1}(\Omega(t))} + \int_{\Omega(t)} e \phi (\grado \cdot \mbf{V}) + \int_{\Omega(t)} \grado \mathcal{U}(e) \cdot \grado \phi = \int_{\Omega(t)} f \phi, \label{eqn: weak stefan}
\end{align}
for all $\phi \in H^1(\Omega(t))$ and almost all $t \in [0,T]$ such that $e(0) = e_0$, for some $e_0 \in L^2(\Omega(0))$.
Here angled brackets denote the duality pairing between $H^{-1}(\Omega(t))$ and $H^1(\Omega(t))$.
This formulation will be the basis of our analysis.
We note that in the case of weaker data, namely $e_0 \in L^1(\Omega(0))$ and $f \in L^1_{L^1}$, one may define a weaker notion of solution~\cite[Definition 1.1]{Alphonse2015Stefan} for which our analysis is not applicable.

\begin{example}[Examples of admissible enthalpy functions]\
    As a motivating example, we consider the map
    \begin{equation}
    \beta(r) \coloneqq \begin{cases}
	\{r\}, & r < 0,\\
	[0,1], & r = 0,\\
	\{r + 1\}, & r > 0, 
    \end{cases}
    \label{eqn: enthalpy example}
    \end{equation}
    as in \cite{Alphonse2015Stefan} where the authors study the well-posedness the enthalpy formulation of the two-phase Stefan problem on an evolving surface.
    In this case it is easy to verify that Assumption~\ref{assumption: enthalpy} holds with $\mathcal{U}: \mbb{R} \rightarrow \mbb{R}$ given by
\begin{equation}\label{eq:classicalU}
\mathcal{U}(r) \coloneqq \begin{cases}
	r, & r <0,\\
	0, & r \in [0,1],\\
	r - 1, & r > 1,
\end{cases}
\end{equation}
and $C_\beta = 1$.\\

An interesting example covered by these assumptions is the map
\begin{align}
    \beta_\varepsilon(r) \coloneqq \begin{cases}
        \{ \frac{r}{\varepsilon} \}, & r \leq 0,\\
        \{\varepsilon r\}, & r > 0.
    \end{cases}
    \label{eqn: enthalpy example2}
\end{align}
This example is such that Assumption~\ref{assumption: enthalpy} holds with $\mathcal{U}_\varepsilon: \mbb{R} \rightarrow \mbb{R}$ defined by
\[
\mathcal{U}_\varepsilon(r) \coloneqq
\begin{cases}
	\varepsilon r, & r \leq 0,\\
	\frac{r}{\varepsilon}, & r > 0,
\end{cases}
\]
and $C_{\beta_{\varepsilon}}= \varepsilon$ provided $\varepsilon \leq 1$.
This is an approximation of the graph considered in the Stefan-type problems of~\cite{Alphonse2024FreeBoundaryLimit} and~\cite{Elliott2017CoupledFreeBoundary} which is obtained in the limit $\varepsilon \rightarrow 0$.
One can similarly approximate the one-phase Stefan problem by a graph of the form
\[ \beta(r) \coloneqq \begin{cases}
	\{\varepsilon r\}, & r < 0,\\
	[0,L], & r = 0,\\
	\{r + L\}, & r > 0, 
    \end{cases} \]
where $L$ denotes the latent heat.
We refer the reader to \cite{Gupta2003Stefan} for further discussion on the one-phase Stefan problem.
\end{example}
    \begin{figure}[ht]
        \centering
        \begin{subfigure}[b]{.5\linewidth}
        \centering
        \begin{tikzpicture}
                \draw[->] (-2, 0) -- (2, 0) node[right] {$r$};
                \draw[->] (0, -2) -- (0, 2) node[above] {$\beta(r)$};
                \draw[thick, red] (-2,-2)--(0,0);
                \draw[thick, red] (0,0)--(0,1);
                \draw[thick,red] (0,1)--(1,2);
                \node[left] at (0,1) {$1$};
        \end{tikzpicture}
        \caption{Plot of the graph \eqref{eqn: enthalpy example}.}
        \label{fig: enthalpy example}
        \end{subfigure}
        ~
        \begin{subfigure}[b]{.5\linewidth}
        \centering
        \begin{tikzpicture}
                \draw[->] (-2, 0) -- (2, 0) node[right] {$r$};
                \draw[->] (0, -2) -- (0, 2) node[above] {$\beta(r)$};
                \draw[thick, red] (0,0)--(2,0.2);
                \draw[thick, red] (-0.2,-2)--(0,0);
        \end{tikzpicture}
        \caption{Plot of the graph \eqref{eqn: enthalpy example2} with $\varepsilon = \frac{1}{10}$.}
        \label{fig: enthalpy example2}
        \end{subfigure}
        \caption{A plot of two example forms of $\beta$.}
        \label{fig: enthalpy examples}
    \end{figure}

\subsection{Strong formulation of a two-phase Stefan problem}
As in \cite[Remark 2.12]{Alphonse2015Stefan} when $\beta$ is given by \eqref{eqn: enthalpy example} then a sufficiently smooth solution, $u \in H^1_{L^2} \cap L^2_{H^2}$, can be shown to solve the classical strong formulation of the Stefan problem
\begin{subequations}
\label{eqn: strong stefan}
\begin{align}
	\matdev u + (u+1)(\grado \cdot \mbf{V}) - \lapo u = f, &\qquad \text{on } \bigcup_{t \in [0,T]} \Omega_+(t) \times \{t\}, \label{eqn: strong stefan 1}\\
	\matdev u + u(\grado \cdot \mbf{V}) - \lapo u = f, &\qquad \text{on } \bigcup_{t \in [0,T]} \Omega_-(t) \times \{t\}, \label{eqn: strong stefan 2}\\
	-(\grado u_+ - \grado u_-) \cdot \boldsymbol{\mu} = V_\mu, &\qquad \text{on } \bigcup_{t \in [0,T]} \Gamma(t) \times \{t\}, \label{eqn: strong stefan 3}\\
	u = 0, &\qquad \text{on } \bigcup_{t \in [0,T]} \Gamma(t) \times \{t\}, \label{eqn: strong stefan 4}
\end{align}
\end{subequations}
pointwise almost everywhere\footnote{Here we understand almost everywhere to mean up to a set of zero $\mathcal{H}^2$ measure for~\eqref{eqn: strong stefan 1},~\eqref{eqn: strong stefan 2}, and up to a set of zero $\mathcal{H}^1$ measure for~\eqref{eqn: strong stefan 3}, \eqref{eqn: strong stefan 4}.}, where $u_\pm = u|_{\overline{\Omega_{\pm}}}$.
Here $\boldsymbol{\mu}$ denotes the unit conormal vector to $\Gamma(t)$ (i.e.~the vector which is tangential to $\Omega(t)$, normal to $\Gamma(t)$, and pointing into $\Omega_+(t)$), and $V_\mu$ denotes the conormal velocity of $\Gamma(t)$.
We illustrate the relations between $\Omega_{\pm}(t)$ and $\Gamma(t)$ appearing in this strong formulation of the Stefan problem in Figure~\ref{fig: stefan diagram}.
This holds under the assumption that there are no so-called ``mushy regions'', i.e.~regions where the interior of $\Gamma(t)$ is non-empty.
In the presence of heat sources it is known that this assumption may not hold, cf.~\cite{Bertsch1986MushyRegions,Elliott1982MovingBoundaryProblems}, and indeed we shall observe numerically that mushy regions may form on an evolving surface even in the absence of a heat source (cf. Figure~\ref{fig: stefan_mushy}).
In this setting it is clear that the enthalpy solutions we are approximating are generalised solutions of the strong formulation of Stefan problem, which does not make sense in the presence of mushy regions.
Moreover, when a solution to the strong formulation~\eqref{eqn: strong stefan} exists it is well known (see for instance~\cite{Elliott1982MovingBoundaryProblems,Gupta2003Stefan}) that this is also an enthalpy solution.

\begin{figure}[ht]
	\centering
	\scalebox{0.8}{
		\begin{tikzpicture}
			\node at (0,0) {\includegraphics[scale=0.5]{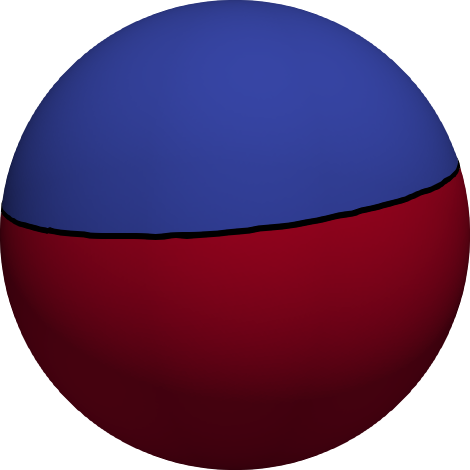}};
			\node[scale=2,white] at (0.5,-2) {${\Omega_+(t)}$};
			\node[scale=2,white] at (-0.3,1) {${\Omega_-(t)}$};
			\node[scale=2,black] at (5,-1) {${\Omega(t)}$};
			\node[scale=2] at (6.1,2.6) {${\Gamma(t)}$};
			\draw[very thick, ->]  (5.8,2.3) -- (4.05,1.15);
            \draw[white, very thick, ->] (2,0.4) -- (1.8,1);
            \node[white, scale=1.25] at (2,0) {$\mbf{V}_{\mu}$};
		\end{tikzpicture}
	}
	\caption{Diagram of the Stefan problem on an evolving surface.
    The free boundary, $\Gamma(t)$, evolves with velocity $\mbf{V}_\mu = V_\mu \boldsymbol{\mu}$ where $V_\mu$ is given by~\eqref{eqn: strong stefan 3}.}
	\label{fig: stefan diagram}
\end{figure}

\begin{remark}
\label{remark: boundary conditions}
We note that our analysis can be applied to surfaces with boundary when one considers either homogeneous Dirichlet or Neumann boundary conditions.
Similarly, our analysis is applicable to the Stefan problem in an evolving bulk domain.
The numerical scheme and our analytical results are novel even in this setting.
\end{remark}

\section{Evolving surfaces and function spaces}
\label{section: evolving surfaces}

We assume that the evolving surface, $\Omega(t)$, is $C^2$ for all $t \in [0,T]$, with a velocity field $\mbf{V} \in C^1([0,T];\mbf{C}^2(\mbb{R}^3;\mbb{R}^3))$.
It particular one finds that
\[ \sup_{t \in [0,T]} \|\mbf{V}\|_{\mbf{C}^2(\Omega(t);\mbb{R}^3)} \leq C < \infty. \]
We emphasise that this is the material velocity of the surface, $\Omega(t)$, and is known a priori and hence \emph{independent of the free boundary}.

One defines the parametrisation following the flow of $\mbf{V}$ as the unique solution, $\Phi\colon [0,T] \times \Omega(0) \rightarrow \mbb{R}^3$, of
\[ \ddt{}\Phi(t;\mbf{x}) = \mbf{V}(t; \Phi(t;\mbf{x})) \quad \forall (t,\mbf{x}) \in [0,T]\times\Omega(0), \quad \Phi(0;\mbf{x}) = \mbf{x} \quad \forall \mbf{x} \in \Omega(0). \]
By construction this is such that $\Phi(t;\Omega(0)) = \Omega(t)$, and hence one has a parametrisation of the evolving surface.
Using this parametrisation we may pushforward functions on $\Omega(0)$ by
\[ \Phi_t \psi \coloneqq \psi \circ \Phi(t)^{-1} \quad \forall \psi: \Omega(0) \rightarrow \mbb{R}, \]
and pullback functions on $\Omega(t)$ by
\[ \Phi_{-t} \chi \coloneqq \chi \circ \Phi(t) \quad \forall \chi: \Omega(t) \rightarrow \mbb{R}. \]
Using these pushforwards/pullbacks one can define \emph{evolving Bochner spaces}, which we shall denote as $L^p_X$, as in~\cite{Alphonse2023EvolvingBanach,Alphonse2015Abstract} as follows.
Given a family of Banach spaces, $\{X(t)\}_{t \in [0,T]}$, consisting of functions
\[\chi: \bigcup_{t \in [0,T]} \Omega(t) \times \{ t \} \rightarrow \mbb{R}, \]
one can define $L^p_X$, for~$p\in [1,\infty]$, as the set of functions such that $\Phi_{-t} \chi \in L^p([0,T];X(0))$, and denote the corresponding norm as $\|\cdot\|_{L^p_X}$.
We refer the reader to \cite{Alphonse2023EvolvingBanach,Alphonse2015Abstract} for further details on evolving Bocher spaces.
We say that the function space-parametrisation pairs, $\{(X(t), \Phi_t)\}_{t \in [0,T]}$, are compatible if we have also that:
\begin{enumerate}
    \item there exists a constant $C_X$ independent of $t$ such that
    \begin{align*}
        \|\Phi_t \psi\|_{X(t)} \leq C_X \|\psi\|_{X(0)} &\quad \forall \psi \in X(0),\\
        \|\Phi_{-t} \chi\|_{X(0)} \leq C_X \|\chi\|_{X(t)} &\quad \forall \chi \in X(t),
    \end{align*}
    \item for all $\chi \in X(0)$ the map $t \mapsto \|\Phi_t \chi\|_{X(t)}$ is measurable.
\end{enumerate}
We refer the reader to~\cite{Alphonse2023EvolvingBanach,Alphonse2015Abstract} for further details on evolving function spaces.
For compatible pairs, $\{(X(t), \Phi_t)\}_{t \in [0,T]}$, $L^p_X$ is a Banach space when equipped with the $\|\cdot\|_{L^p_X}$ norm defined by
\[ \|\chi\|_{L^p_X} \coloneqq \begin{cases}
    \left( \int_0^T \|\chi(t)\|_{X(t)}^p \, \dt \right)^{\frac{1}{p}}, & p \in [1,\infty),\\
    \esssup{t \in [0,T]} \|\chi(t)\|_{X(t)}, & p = \infty.
\end{cases} \]
Moreover, if $p = 2$ and $\{X(t)\}_{t \in [0,T]}$ is a family of Hilbert spaces then $L^2_X$ is a Hilbert space, where one obtains the corresponding inner product by polarisation.
Under the assumption of compatibility the function spaces $L^p_X$ inherit many of the nice properties of the usual Bochner spaces --- we refer the reader to \cite{Alphonse2023EvolvingBanach} for details.
We will be interested in the case where $X(t)$ is a Sobolev space defined over $\Omega(t)$, which we shall denote as $W^{k,q}(\Omega(t))$ for $k \in \mbb{N}$ and $q \in [1,\infty]$.
It is well known~\cite{Alphonse2023EvolvingBanach,Alphonse2015Abstract,Alphonse2015ParabolicPDEs} that our assumption on $\mbf{V}$ means that the pairs $\{(W^{k,q}(\Omega(t)),\Phi_t)\}_{t\in[0,T]}$ are compatible in the above sense for $k \in \{0,1\}$ and $q \in [1,\infty]$.
In the case $q = 2$ we shall write $H^k(\Omega(t)) \coloneqq W^{k,2}(\Omega(t))$.
We refer the reader to \cite{Aubin1982Manifolds} for further details on Sobolev spaces defined on Riemannian manifolds, and \cite[Section 2]{Alphonse2015ParabolicPDEs} for further details on Sobolev spaces on evolving hypersurfaces.

The natural notion of a time derivative on the evolving surface must take into account both the variation of the function in time, but also effects due to evolution of the surface.
As such one works with the \emph{material derivative}, defined as
\[ \matdev \chi \coloneqq \Phi_t \left( \ddt{} \Phi_{-t} \chi \right),  \]
for all sufficiently smooth functions $\chi$.
This notion can be generalised to a weak material derivative analogously to the usual time derivative, cf.~\cite[Definition 3.6]{Alphonse2023EvolvingBanach}.
We are particularly interested in the Gelfand triple setting, $H^1(\Omega(t)) \subset L^2(\Omega(t)) \equiv (L^2(\Omega(t)))^* \subset H^{-1}(\Omega(t))$, and a weak time derivative taking values in $H^{-1}(\Omega(t))$, the dual space to $H^1(\Omega(t))$.
We shall denote by $H^1_{H^{-1}}$ the evolving Sobolev--Bochner space consisting of functions
\[ H^1_{H^{-1}} \coloneqq \{ \chi \in L^2_{L^2} \mid \matdev \chi \in L^2_{H^{-1}}  \}, \]
and when the material derivative has further regularity $\matdev \chi \in L^2_{L^2}$ we shall write $\chi \in H^1_{L^2}$.

We end this section by recalling the transport theorem on an evolving surface, which we will use in our later analysis.
For this we firstly introduce the following notation.
\begin{align*}
    m(t; \phi, \psi) &\coloneqq \int_{\Omega(t)} \phi \psi,\\
    m_*(t; \mathcal{L}, \psi) &\coloneqq \angled{\mathcal{L}, \psi}_{H^{-1}(\Omega(t)) \times H^1(\Omega(t))}\\
    g(t; \phi, \psi) &\coloneqq \int_{\Omega(t)} \phi \psi (\grado \cdot \mbf{V}),\\
    a(t; \phi, \psi) &\coloneqq \int_{\Omega(t)} \grado \phi \cdot \grado \psi,\\
    b(t; \phi, \psi) &\coloneqq \int_{\Omega(t)} \left((\grado \cdot \mbf{V}) \mbb{I} - \grado \mbf{V} - (\grado \mbf{V})^T\right) \grado \phi \cdot \grado \psi,
\end{align*}
for all sufficiently smooth functions $\phi, \psi$, linear functionals $\mathcal{L} \in H^{-1}(\Omega(t))$, and where $\mbb{I}$ denotes the identity matrix.
We will omit the argument $t$ throughout, as it will be clear from context.
\begin{lemma}[{\cite[Lemma 5.2]{Dziuk2013SurfacePDEs}}]
    \label{lemma: transport theorem}
    Let $\phi, \psi \in L^2_{L^2} \cap H^1_{H^{-1}}$ then
    \[ \ddt{} m(\phi, \psi) = m_*(\matdev \phi, \psi) + m_*(\matdev \psi, \phi) + g(\phi, \psi). \]
    If we have further regularity, $\phi, \psi \in L^2_{H^1}$ and $\matdev \phi, \matdev \psi \in L^2_{H^1}$, then 
    \[ \ddt{} a(\phi, \psi) = a(\matdev \phi, \psi) + a(\matdev \psi, \phi) + b(\phi, \psi). \]
\end{lemma}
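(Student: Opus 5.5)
The plan is to pull everything back to the fixed reference surface $\Omega(0)$ with the flow map $\Phi(t)$, where the material derivative becomes an ordinary time derivative, differentiate there, and push forward again. Write $J(t;\cdot)$ for the surface Jacobian of $\Phi(t)$, so that $\int_{\Omega(t)} \chi = \int_{\Omega(0)} (\Phi_{-t}\chi) J(t)$. A standard computation with the flow ODE $\ddt{}\Phi = \mbf{V}(\Phi)$ gives
\[ \frac{\partial J}{\partial t} = \Phi_{-t}(\grado \cdot \mbf{V})\, J . \]

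\textbf{Smooth case for $m$.} For smooth $\phi,\psi$ we have
\[ m(\phi,\psi) = \int_{\Omega(0)} \Phi_{-t}\phi\, \Phi_{-t}\psi\, J(t). \]
Differentiating under the integral and using $\ddt{}\Phi_{-t}\phi = \Phi_{-t}\matdev\phi$, the product rule gives three terms. Pushing them forward yields $m(\matdev\phi,\psi) + m(\phi,\matdev\psi) + g(\phi,\psi)$.

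\textbf{Extension to $L^2_{L^2}\cap H^1_{H^{-1}}$.} This is the main technical step, and I would do it by density, as in the Alphonse--Elliott--Stinner framework.
\begin{itemize}
\item Smooth functions, in the sense of pushforwards of $C^1([0,T];H^1(\Omega(0)))$, are dense in $H^1_{H^{-1}}\cap L^2_{H^1}$. This relies on the compatibility of $\{(H^{\pm1}(\Omega(t)),\Phi_t)\}$ and on the fact that $\matdev$ of a pushforward corresponds to $\frac{d}{dt}$ plus a bounded operator induced by $J$.
\item The smooth identity is then read in its integrated (distributional-in-time) form:
\[ m(\phi,\psi)(t_2)-m(\phi,\psi)(t_1) = \int_{t_1}^{t_2} \big( m_*(\matdev\phi,\psi)+m_*(\matdev\psi,\phi)+g(\phi,\psi) \big). \]
\item Pass to the limit in each term. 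The duality terms converge because $\matdev\phi_n\to\matdev\phi$ in $L^2_{H^{-1}}$ while $\psi_n\to\psi$ in $L^2_{H^1}$. The $g$ term converges since $\grado\cdot\mbf{V}$ is bounded.
\end{itemize}
For the duality pairing to make sense one implicitly needs $\phi,\psi\in L^2_{H^1}$, which is the natural Gelfand-triple hypothesis. I would flag this as the intended reading of the statement. The left-hand side then converges in $C([0,T])$ by the evolving Lions--Magenes embedding $H^1_{H^{-1}}\cap L^2_{H^1}\hookrightarrow C^0_{L^2}$, which closes the first claim.

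\textbf{The $a$ identity.} This follows the same scheme. Pull back:
\[ a(\phi,\psi)=\int_{\Omega(0)} A(t)\,\nabla_{\Omega(0)}\Phi_{-t}\phi\cdot\nabla_{\Omega(0)}\Phi_{-t}\psi , \]
where $A(t) = J\,(D\Phi)^{-1}(D\Phi)^{-T}$ is suitably projected onto the tangent spaces. Differentiating gives the two $a(\matdev\cdot,\cdot)$ terms plus $\int A'\,\nabla\cdot\nabla$. Then compute $A' = \Phi_{-t}\big((\grado\cdot\mbf{V})\mbb{I}-\grado\mbf{V}-(\grado\mbf{V})^T\big)$ composed with the pullbacks, using $\frac{d}{dt}D\Phi = D\mbf{V}\,D\Phi$ and the derivative of $J$ computed above. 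This identifies the extra term as $b(\phi,\psi)$.

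The delicate part is tracking the tangential projections: the normal components of $D\mbf{V}$ must drop out when contracted with tangential gradients. Density then extends the identity to $\phi,\psi,\matdev\phi,\matdev\psi\in L^2_{H^1}$, where every term is continuous in these norms because $\mbf{V}\in C^1$ in space.
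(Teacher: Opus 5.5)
Your proposal is correct, and you are right to flag the hypothesis. With this paper's definition $H^1_{H^{-1}}\subset L^2_{L^2}$, the pairing $m_*(\partial^\bullet\phi,\psi)$ only makes sense if also $\phi,\psi\in L^2_{H^1}$.

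The paper gives no proof of its own and simply cites Dziuk--Elliott. The standard argument there is Eulerian and covers smooth functions only. It applies the Leibniz formula $\frac{\mathrm{d}}{\mathrm{d}t}\int_{\Omega(t)}\eta=\int_{\Omega(t)}\partial^\bullet\eta+\eta\,\nabla_{\Omega}\cdot\mathbf{V}$ to $\eta=\phi\psi$ and to $\eta=\nabla_\Omega\phi\cdot\nabla_\Omega\psi$. For the second case it uses a commutator formula expressing $\partial^\bullet\nabla_\Omega\phi-\nabla_\Omega\partial^\bullet\phi$ through $\nabla_\Omega\mathbf{V}$ acting on $\nabla_\Omega\phi$. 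The form $b$ then collects the divergence term and the two commutator terms.

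You instead work on $\Omega(0)$, where $\frac{\mathrm{d}}{\mathrm{d}t}$ commutes with $\nabla_{\Omega(0)}$. There the commutator lemma is replaced by differentiating the Jacobian and the inverse pulled-back metric. The Eulerian computation is intrinsic and carries over verbatim to Lemmas~\ref{lemma: discrete transport theorem} and~\ref{lemma: lifted transport theorem}, by replacing $\mathbf{V}$ with $\mathbf{V}_h$ or $\mathbf{V}_h^\ell$. Your computation avoids the commutator identity and lives in the same reference-surface setting as your density step.

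That density step is not cosmetic. The cited result covers smooth functions only, so the weak version asserted in the statement needs exactly the evolving-space equivalence and density argument of Alphonse--Elliott--Stinner that you outline.

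For the $a$ computation, avoid a ``suitably projected'' $(D\Phi)^{-1}(D\Phi)^{-T}$. Take instead $A(t)=J\,G(t)^{-1}$, where $G(t)=(D\Phi)^TD\Phi$ is the pulled-back metric on $T\Omega(0)$. Since $D\Phi$ maps $T\Omega(0)$ into $T\Omega(t)$, on tangent vectors you get $\partial_tG=(D\Phi)^T\bigl(\nabla_\Omega\mathbf{V}+(\nabla_\Omega\mathbf{V})^T\bigr)D\Phi$, with $\nabla_\Omega\mathbf{V}$ evaluated at $\Phi$. You also have $\partial_tG^{-1}=-G^{-1}(\partial_tG)G^{-1}$. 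Together with $\partial_tJ=J\,\Phi_{-t}(\nabla_\Omega\cdot\mathbf{V})$, this yields $b$ with no normal components left to track.
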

\section{The evolving surface finite element method}
\label{section: ESFEM}
\subsection{ESFEM and geometric perturbation estimates}

Let us now briefly recap some of the details of the evolving surface finite element method.
We refer the reader to~\cite{Dziuk2013SurfacePDEs,Elliott2021EvolvingFiniteElement} for further details.
Given a sufficiently smooth surface, $\Omega(0)$, and a set of vertices $\{\mbf{x}_i(0)\}_{i \in \{1,\ldots, N_h\}} \subset \Omega(0)$, we may construct a triangulated domain by appropriately connecting these vertices by edges.
Here $N_h$ denotes the number of degrees of freedom.
We denote the corresponding triangulation as $\mathcal{T}_h(0)$, which in turn defines a triangulated surface, $\Omega_h(0)$, via
\[ \Omega_h(0) \coloneqq \bigcup_{K \in \mathcal{T}_h(0)} K. \]
One then may evolve the vertices by using the parametrisation, $\Phi$, to obtain vertices $\{\mbf{x}_i(t)\}_{i \in \{1, \ldots, N_h\}}$ and a corresponding triangulation $\mathcal{T}_h(t)$.
One then defines the triangulated surface
\[\Omega_h(t) \coloneqq \bigcup_{K(t) \in \mathcal{T}_h(t)} K(t).\]
Notice that this construction yields a discrete velocity field, $\mbf{V}_h$, where is it straightforward, cf.~\cite[Section 7]{Elliott2021EvolvingFiniteElement}, to see that this is the Lagrange interpolant of $\mbf{V}$.
We shall denote the mesh size of our family of triangulated surfaces as
\[ h \coloneqq \sup_{t \in [0,T]} \max_{K(t) \in \mathcal{T}_h(t)} \diam(K(t)). \]
Throughout our analysis we will assume that our triangulations of the evolving surface are uniformly quasi-uniform in the sense of~\cite[Definition 6.29]{Elliott2021EvolvingFiniteElement}.

We now denote our (continuous Lagrange) finite element spaces as
\[ S_h(t) \coloneqq \{ \phi: \Omega_h(t) \rightarrow \mbb{R} \mid \phi|_{K(t)} \text{ is affine linear } \forall K(t) \in \mathcal{T}_h(t) \}. \]

\subsubsection{Lifts and geometric perturbation estimates}
Since our triangulated surface is not the true surface our associated finite element families are non-conforming.
As such we are committing a so-called \emph{variational crime} (cf.~\cite[Chapter 27]{Ern2021FiniteElementsII}) which we will mitigate by the use of \emph{lifts}.
For a surface, $\Omega(t)$, there exists a neighbourhood of $\Omega(t)$, denoted $\mathcal{N}(\Omega(t)) \subset \mbb{R}^3$ such that each $\mbf{x} \in \mathcal{N}(\Omega(t))$ may be uniquely expressed in Fermi coordinates
\[ \mbf{x} = \mbf{p}(t;\mbf{x}) + d(t,\mbf{x}) \boldsymbol{\nu}(t; \mbf{p}(t; \mbf{x})). \]
Here $\mbf{p}(t;\cdot)$ denotes the closest point projection onto $\Omega(t)$, $d(t;\cdot)$ denotes the signed distance function of $\Omega(t)$, and $\boldsymbol{\nu}(t;\cdot)$ denotes the outward unit normal vector on $\Omega(t)$.
We now use this to ``lift'' a function from $\Omega_h(t)$ onto $\Omega(t)$ implicitly via
\[ \eta_h^\ell(\mbf{p}(t;\mbf{x})) = \eta_h(\mbf{x}) \quad \forall \mbf{x} \in \Omega_h(t), \eta_h: \Omega_h(t) \rightarrow \mbb{R}. \]
One can similarly define an inverse lift from $\Omega(t)$ onto $\Omega_h(t)$ by
\[ \eta^{-\ell}(\mbf{x}) = \eta(\mbf{p}(t; \mbf{x})) \quad \forall \mbf{x} \in \Omega_h(t), \eta : \Omega(t) \rightarrow \mbb{R}. \]
These operations are stable in $W^{k,q}(\Omega(t))$ for $k \in \{0,1\}$ and $q \in [1,\infty]$ in the sense that there exist constants $C_{k,q} > 0$, independent of $h$, such that
\begin{align}
\frac{1}{C_{k,q}}\|\eta^{-\ell}\|_{W^{k,q}(\Omega_h(t))} \leq \|\eta\|_{W^{k,q}(\Omega(t))} \leq {C_{k,q}}\|\eta^{-\ell}\|_{W^{k,q}(\Omega_h(t))} \quad \forall \eta \in W^{k,q}(\Omega(t)). \label{eqn: lift stability}
\end{align}
We refer the reader to~\cite{Elliott2021EvolvingFiniteElement} for further details.

The evolution of the nodes of $\Omega_h(t)$ by $\mbf{V}_h$ induces a parametrisation, $\Phi^h(t): \Omega_h(0) \rightarrow \Omega^h(t)$, analogously to the definition of $\Phi(t)$.
This in turn defines a (strong) discrete material derivative by
\[ \matdev_h \chi_h \coloneqq \Phi_t^h \left( \ddt{} \Phi_{-t}^{h} \chi_h \right), \]
for sufficiently smooth functions, $\chi_h$, defined on $\Omega_h(t)$.
and one may define a weak discrete material derivative accordingly.
Using this, one may state a discrete transport theorem, analogous to Lemma~\ref{lemma: transport theorem}.
For this we introduce the following notation for bilinear forms.
\begin{align*}
    m_h(t; \phi_h, \psi_h) &\coloneqq \int_{\Omega_h(t)} \phi \psi,\\
    g_h(t; \phi_h, \psi_h) &\coloneqq \int_{\Omega_h(t)} \phi_h \psi_h (\gradoh \cdot \mbf{V}_h),\\
    a_h(t; \phi_h, \psi_h) &\coloneqq \int_{\Omega_h(t)} \gradoh \phi_h \cdot \gradoh \psi_h,\\
    b_h(t; \phi_h, \psi_h) &\coloneqq \int_{\Omega_h(t)} \left((\gradoh \cdot \mbf{V}_h) \mbb{I} - \gradoh \mbf{V}_h - (\gradoh \mbf{V}_h)^T\right) \gradoh \phi_h \cdot \gradoh \psi_h.
\end{align*}
\begin{lemma}[{\cite[Lemma 5.6]{Dziuk2013SurfacePDEs}}]
    \label{lemma: discrete transport theorem}
    Let $\phi_h, \psi_h \in \mathcal{S}_h$ then
    \begin{align*}
        \ddt{} m_h(\phi_h, \psi_h) &= m_h(\matdev_h \phi_h, \psi_h) + m_h(\matdev \psi_h, \phi_h) + g_h(\phi_h, \psi_h),\\
        \ddt{} a_h(\phi_h, \psi_h) &= a_h(\matdev_h \phi_h, \psi_h) + a_h(\matdev \psi_h, \phi_h) + b_h(\phi_h, \psi_h).
    \end{align*}
\end{lemma}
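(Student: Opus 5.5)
The plan is to deduce the discrete transport theorem from the smooth-surface transport formula, applied element by element, and then sum over the triangulation. Fix $\phi_h,\psi_h \in S_h(t)$ and an element $K(t)\in\mathcal{T}_h(t)$. Each $K(t)$ is the image of $K(0)$ under the restriction of $\Phi^h(t)$, which is affine on $K(0)$: it is the piecewise linear interpolant of $\Phi(t)$ determined by the vertex trajectories. So $K(t)$ is a flat evolving surface with boundary, moving with the affine velocity $\mbf{V}_h|_{K(t)}$.

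First I pull back to $K(0)$. Writing $J_K(t)$ for the area Jacobian of $\Phi^h(t)|_{K(0)}$, we have
\[ \int_{K(t)} \phi_h\psi_h = \int_{K(0)} (\Phi^h_{-t}\phi_h)(\Phi^h_{-t}\psi_h)\,J_K(t). \]
Differentiating in $t$ uses the standard identity $\ddt{}J_K = (\nabla_{K}\cdot \mbf{V}_h)\,J_K$, which holds on a flat triangle from the formula for the derivative of the Gram determinant. The definition of $\matdev_h$ then gives
\[ \ddt{}\int_{K(t)}\phi_h\psi_h = \int_{K(t)} \left( \matdev_h\phi_h\,\psi_h + \phi_h\,\matdev_h\psi_h + \phi_h\psi_h\,\nabla_K\cdot\mbf{V}_h \right). \]
Summing over elements gives the first identity. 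No boundary terms arise from the element boundaries, because the pullback to the fixed reference element $K(0)$ absorbs all of the motion. I would also note that $S_h$ is transported by $\Phi^h$: basis functions satisfy $\matdev_h \chi_j = 0$, so $\matdev_h\phi_h \in S_h$ and all the terms make sense.

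For the second identity I compute on each element as well. There $\nabla_{K}\phi_h = P_K \nabla(\text{affine extension})$, where $P_K$ is the tangential projection of the plane of $K(t)$. I then differentiate
\[ \int_{K(0)} \bigl(\nabla_K\phi_h\circ\Phi^h\bigr)\cdot\bigl(\nabla_K\psi_h\circ\Phi^h\bigr)\,J_K. \]
The key commutator formula on a flat moving element is
\[ \matdev_h(\nabla_K\phi_h) = \nabla_K\matdev_h\phi_h - (\nabla_K\mbf{V}_h)^T\nabla_K\phi_h \]
up to normal components, and these normal components vanish when dotted with a tangential vector. I would prove it by differentiating the chain rule $\nabla_{K(0)}(\Phi_{-t}^h\phi_h) = (D\Phi^h)^T (\nabla_K\phi_h\circ\Phi^h)$ in time. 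Combining this commutator with the Jacobian derivative gives, element by element,
\[ a_K(\matdev_h\phi_h,\psi_h) + a_K(\phi_h,\matdev_h\psi_h) + \int_K \left( (\nabla_K\cdot\mbf{V}_h)\mbb{I} - \nabla_K\mbf{V}_h - (\nabla_K\mbf{V}_h)^T \right)\nabla_K\phi_h\cdot\nabla_K\psi_h, \]
and summing over elements yields $b_h$.

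The main obstacle is the gradient commutator. In particular one must verify that the projection $P_K(t)$ is time dependent, and that its derivative contributes only normal components which drop out against tangential gradients. Since everything is affine per element and the elements are flat, this reduces to a finite-dimensional linear-algebra identity, and I would check it directly in a local orthonormal frame of $K(t)$.
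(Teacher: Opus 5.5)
Your proposal is correct and follows the standard argument behind the cited result; the paper gives no proof of its own and only refers to Dziuk--Elliott. That argument applies the smooth transport formulae element by element, with no boundary contributions because each element is transported by $\mathbf{V}_h$. The stiffness identity then follows from the interchange formula $\matdev_h(\nabla_K\phi_h) = \nabla_K\matdev_h\phi_h - (\nabla_K\mathbf{V}_h)^T\nabla_K\phi_h$, which holds modulo normal components. The only difference is that you re-derive both ingredients directly on the flat affine elements, via the pullback to $K(0)$ and by differentiating the chain rule for $D\Phi^h$; this is a valid and, in this piecewise-affine setting, quite elementary route.
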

By combining the parametrisation $\Phi^h(t)$, and the lifts one may define a lifted material derivative as
\[ \matdev_\ell \chi = (\matdev_h \chi^{-\ell})^\ell, \]
for all sufficiently smooth functions, $\chi$, defined on $\Omega(t)$.
We refer the reader to~\cite{Elliott2021EvolvingFiniteElement} for further details, and the proof of the following result relating $\matdev$ and $\matdev_\ell$.

\begin{lemma}[{\cite[Lemma 9.25]{Elliott2021EvolvingFiniteElement}}]
    \label{lemma: time derivative difference}
    For a sufficiently smooth function $\eta$ one has that
    \begin{equation}
        \|\matdev \eta - \matdev_\ell \eta\|_{L^2(\Omega(t))} \leq C h^2 \|\eta\|_{H^1(\Omega(t))}. \label{eqn: time derivative difference}
    \end{equation}
\end{lemma}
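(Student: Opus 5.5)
The idea is to express both material derivatives through the same fixed reference domain and compare the resulting velocities. Let $\Omega_h(t)$ be parametrised by $\Phi^h(t)$, and let $\Phi^\ell(t) := \mbf{p}(t;\Phi^h(t;\cdot))$ be its lift to $\Omega(t)$. Then $\Phi^\ell$ is a second parametrisation of the evolving surface $\Omega(t)$, and its velocity $\mbf{V}_\ell$ satisfies $\ddt{}\Phi^\ell = \mbf{V}_\ell\circ\Phi^\ell$. Unwinding the definition, for $\mbf{x}\in\Omega_h(t)$ we have
\[
\matdev_\ell\eta\bigl(\mbf{p}(t;\mbf{x})\bigr) = \ddt{}\Bigl[\eta\bigl(t,\mbf{p}(t;\Phi^h(t;\mbf{y}))\bigr)\Bigr],
\]
so $\matdev_\ell\eta$ is the time derivative of $\eta$ following the flow of $\mbf{V}_\ell$. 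Since $\matdev$ and $\matdev_\ell$ both differentiate along curves lying in the space–time surface, the chain rule applied to a smooth extension of $\eta$ gives
\[
\matdev \eta - \matdev_\ell \eta = (\mbf{V} - \mbf{V}_\ell)\cdot\nabla\eta .
\]
Both velocities share the same normal component, because each parametrises the same surface. Hence $\mbf{V}-\mbf{V}_\ell$ is tangential and the extended gradient can be replaced by $\grado\eta$. Cauchy--Schwarz then yields
\[
\|\matdev\eta-\matdev_\ell\eta\|_{L^2(\Omega(t))}\le \|\mbf{V}-\mbf{V}_\ell\|_{L^\infty(\Omega(t))}\,\|\eta\|_{H^1(\Omega(t))},
\]
so the whole lemma reduces to the velocity estimate $\|\mbf{V}-\mbf{V}_\ell\|_{L^\infty}\le Ch^2$.

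To prove the velocity estimate, I will differentiate the relation $\Phi^\ell = \mbf{p}(t;\Phi^h)$ in time:
\[
\mbf{V}_\ell\circ\Phi^\ell = \partial_t\mbf{p}(t;\Phi^h) + \nabla\mbf{p}(t;\Phi^h)\,\mbf{V}_h\circ\Phi^h .
\]
On $\Omega(t)$ itself, $\mbf{p}$ is the identity and $\mbf{V}$ is the true velocity, which gives the exact identity
\[
\mbf{V}\circ\mbf{p} = \partial_t\mbf{p}\big|_{\Omega} + \nabla\mbf{p}\big|_{\Omega}\,\mbf{V}\circ\mbf{p}.
\]
Subtracting the two relations at $\mbf{x}\in\Omega_h(t)$ produces three kinds of error:
\begin{itemize}
\item the Lagrange interpolation error $\mbf{V}_h-\mbf{V}$, which is $O(h^2)$ in $L^\infty$ for $\mbf{V}\in\mbf{C}^2$;
\item the terms $\partial_t\mbf{p}(\mbf{x})-\partial_t\mbf{p}(\mbf{p}(\mbf{x}))$ and $\nabla\mbf{p}(\mbf{x})-\nabla\mbf{p}(\mbf{p}(\mbf{x}))$, which are $O(|d(t,\mbf{x})|)=O(h^2)$ by the standard bound $\|d\|_{L^\infty(\Omega_h)}\le Ch^2$;
\item the difference $\mbf{V}(\mbf{x})-\mbf{V}(\mbf{p}(\mbf{x}))$, which is also $O(|d|)=O(h^2)$.
\end{itemize}
Together these give $\|\mbf{V}_\ell-\mbf{V}\|_{L^\infty(\Omega(t))}\le Ch^2$.

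The main obstacle is the regularity needed for these bounds. We need $\partial_t\mbf{p}$ and $\nabla\mbf{p}$ to be Lipschitz in a neighbourhood of $\Omega(t)$, which requires enough smoothness of the distance function in space–time (essentially a $C^3$-type surface, or the standard ESFEM geometric assumptions of~\cite{Elliott2021EvolvingFiniteElement}). This is the "sufficiently smooth" hypothesis, and I would state it explicitly. There is a second, smaller point. The $O(h^2)$ bounds hold elementwise, and $\mbf{V}_h$ is only piecewise smooth, so the chain-rule identity should be justified on each lifted element separately. This suffices, because the $L^2$ norm is additive over elements and $\eta$ is smooth.
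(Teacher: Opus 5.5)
Your proposal is correct. It is essentially the standard argument behind the cited result; the paper itself gives no proof and only cites Elliott--Ranner. That argument writes $\matdev\eta-\matdev_\ell\eta=(\mbf{V}-\mbf{V}_\ell)\cdot\grado\eta$, which holds because both flows parametrise $\Omega(t)$ and so share the normal velocity, and then proves $\|\mbf{V}-\mbf{V}_\ell\|_{L^\infty(\Omega(t))}\le Ch^2$ from the interpolation error of $\mbf{V}_h$ together with $|d|\le Ch^2$ on $\Omega_h(t)$. Your elementwise caveat is harmless but unnecessary, since $\mbf{V}_h$ is continuous and each trajectory $t\mapsto\Phi^h(t;\mbf{y})$ is $C^1$ in time.
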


One may equivalently formulate Lemma~\ref{lemma: transport theorem} using the lifted material derivative as follows.
\begin{lemma}[{\cite[Lemma 8.15]{Elliott2021EvolvingFiniteElement}}]
    \label{lemma: lifted transport theorem}
    Let $\phi, \psi \in L^2_{L^2} \cap H^1_{H^{-1}}$ then
    \[ \ddt{} m(\phi, \psi) = m_*(\matdev_\ell \phi, \psi) + m_*(\matdev_\ell \psi, \phi) + g_\ell(\phi, \psi). \]
    If we have further regularity,$\phi, \psi \in L^2_{H^1}$ and $\matdev_\ell \phi, \matdev_\ell \psi \in L^2_{H^1}$, then 
    \[ \ddt{} a(\phi, \psi) = a(\matdev_\ell \phi, \psi) + a(\matdev_\ell \psi, \phi) + b_\ell(\phi, \psi). \]
    Here we have introduced two new bilinear forms:
    \begin{align*}
        g_\ell(t; \phi, \psi) &\coloneqq \int_{\Omega(t)} \phi \psi (\grado \cdot \mbf{V}_h^\ell),\\
        b_\ell(t; \phi, \psi) &\coloneqq \int_{\Omega(t)} \left((\grado \cdot \mbf{V}_h^\ell) \mbb{I} - \grado \mbf{V}_h^\ell - (\grado \mbf{V}_h^\ell)^T\right) \grado \phi \cdot \grado \psi.
    \end{align*}
\end{lemma}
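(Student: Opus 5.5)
The plan is to reduce the statement to Lemma~\ref{lemma: transport theorem} by rewriting the lifted material derivative in terms of the true one. The key observation is that both $\matdev$ and $\matdev_\ell$ are material derivatives, but along two different flows on $\Omega(t)$. The first follows $\mbf{V}$. The second follows the lifted discrete flow, namely $\Phi^h$ composed with the closest point projection $\mbf{p}$, whose velocity on $\Omega(t)$ we denote $\mbf{V}_h^\ell$; this is the field appearing in $g_\ell$ and $b_\ell$. Both velocities have the same normal component, since each flow maps $\Omega(t)$ onto itself. Hence $\mbf{w} \coloneqq \mbf{V}_h^\ell - \mbf{V}$ is tangential, and for smooth $\chi$ the chain rule gives
\[ \matdev_\ell \chi = \matdev \chi + \mbf{w}\cdot\grado \chi. \]

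First step (mass form). I would work with smooth $\phi,\psi$ and argue by density at the end. Applying Lemma~\ref{lemma: transport theorem} and substituting $\matdev = \matdev_\ell - \mbf{w}\cdot\grado$ yields the desired identity up to a correction. That correction equals $-\int_{\Omega(t)} \mbf{w}\cdot\grado(\phi\psi)$ on one side and $\int_{\Omega(t)} \phi\psi\,(\grado\cdot\mbf{V}_h^\ell - \grado\cdot\mbf{V})$ on the other. Since $\mbf{w}$ is tangential and $\Omega(t)$ has no boundary, the surface divergence theorem gives
\[ \int_{\Omega(t)} \mbf{w}\cdot\grado(\phi\psi) = -\int_{\Omega(t)} \phi\psi\,\grado\cdot\mbf{w}, \]
so the two corrections cancel. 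An alternative route, avoiding the decomposition altogether, is to pull back directly to $\Omega_h(0)$ through $\Phi^h$ and the lift, and differentiate the Jacobian, exactly as in the proof of Lemma~\ref{lemma: transport theorem}. This gives the identity with $g_\ell$ at once.

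Second step (stiffness form). I would argue identically for $a$, using the second identity of Lemma~\ref{lemma: transport theorem}. Now one must commute $\grado$ with $\mbf{w}\cdot\grado$, which produces terms involving $\grado\mbf{w}$ and the curvature. These must combine with $b_\ell - b$ after an integration by parts. This bookkeeping is messy, so for this part I would prefer the direct pullback argument. In the parametrised setting the derivative of the pulled-back metric tensor and area element along the flow $\mbf{V}_h^\ell$ gives exactly the tensor $(\grado\cdot\mbf{V}_h^\ell)\mbb{I} - \grado\mbf{V}_h^\ell - (\grado\mbf{V}_h^\ell)^T$, by the same computation as in \cite{Dziuk2013SurfacePDEs}. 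This step only needs $\mbf{V}_h^\ell$ to be Lipschitz, which holds elementwise.

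Main obstacle. The hard part is the extension to the low-regularity setting $\phi,\psi \in L^2_{L^2}\cap H^1_{H^{-1}}$, with $m_*$ and $\matdev_\ell$ understood weakly. The flow $\mbf{V}_h^\ell$ is only piecewise smooth, and we must check that the pair $(L^2,\Phi^h_t\circ\text{lift})$ is compatible. I would then invoke the abstract transport theorem of \cite{Alphonse2015Abstract,Alphonse2023EvolvingBanach}, which requires only Lipschitz regularity of the flow and the measurability conditions of Section~\ref{section: evolving surfaces}. Smooth functions are dense in $H^1_{H^{-1}}$, and the identity passes to the limit by continuity of both sides.
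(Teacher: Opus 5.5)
Your proof is correct. You read $\mbf{V}_h^\ell$ as the velocity of the lifted flow $\mbf{p}(t;\cdot)\circ\Phi^h(t)\circ(\mbf{p}(0;\cdot)|_{\Omega_h(0)})^{-1}$, and that is the reading under which the lemma holds. It is not the pointwise lift $\mbf{V}_h\circ\mbf{p}^{-1}$, whose surface divergence differs in general by $\mathcal{O}(h)$, so the identities would fail for it.

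The paper gives no proof and imports the lemma from \cite{Elliott2021EvolvingFiniteElement}. There it is presumably the general transport formula applied to this lifted flow, which is $C^1$ in time and Lipschitz, piecewise smooth in space. That is your ``direct pullback'' route, which you use for the stiffness identity and offer as an alternative for the mass identity.

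Your primary argument for the mass identity is genuinely different. Writing $\matdev_\ell=\matdev+\mbf{w}\cdot\grado$ with $\mbf{w}$ tangential, you reduce it to Lemma~\ref{lemma: transport theorem} plus the divergence theorem for $\phi\psi\mbf{w}$ on the closed surface.
\begin{itemize}
\item What this buys you: the low-regularity case comes almost for free. $\mbf{w}\cdot\grado$ maps $L^2_{H^1}$ boundedly into $L^2_{L^2}$, so $\matdev_\ell\phi\in L^2_{H^{-1}}$ exactly when $\matdev\phi\in L^2_{H^{-1}}$. You never need to check compatibility of the lifted flow or rerun the abstract theory.
\item What it costs: it does not transfer cleanly to $a(\cdot,\cdot)$. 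Commuting $\grado$ with $\mbf{w}\cdot\grado$ needs second derivatives of $\phi,\psi$ and curvature bookkeeping, so switching to the pullback there is the sensible choice.
\end{itemize}

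Two precision points.
\begin{itemize}
\item The divergence theorem step needs $\mbf{w}\in W^{1,\infty}(\Omega(t))$ globally, not merely elementwise. This holds because $\mbf{V}_h$ is continuous across element edges, so $\mbf{V}_h^\ell$ is continuous and piecewise smooth. Without it you would pick up edge terms $\int_E \phi\psi\,[\mbf{w}\cdot\boldsymbol{\mu}_E]$.
\item The first identity, and your limit passage ``by continuity of both sides'', require $\phi,\psi\in L^2_{H^1}\cap H^1_{H^{-1}}$. The term $m_*(\matdev_\ell\phi,\psi)$ is an $H^{-1}\times H^1$ pairing, so the hypothesis $L^2_{L^2}\cap H^1_{H^{-1}}$ as written (here and in Lemma~\ref{lemma: transport theorem}) is not enough. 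You need $\psi_n\to\psi$ in $L^2_{H^1}$ for the pairings to converge.
\end{itemize}
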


Finally, to quantify the error induced by lifting functions we have the following geometric perturbation results.
\begin{lemma}[{\cite[Lemma 9.24]{Elliott2021EvolvingFiniteElement}}]
    \label{lemma: geometric perturbations}
    For sufficiently small $h$ the following hold:
    \begin{align}
        |m(\phi, \psi) - m_h(\phi^{-\ell}, \psi^{-\ell})| &\leq Ch^2 \|\phi\|_{L^2(\Omega(t))} \|\psi\|_{L^2(\Omega(t))}, \label{eqn: geometric perturbation m}\\
        |g_\ell(\phi, \psi) - g_h(\phi^{-\ell}, \psi^{-\ell})| &\leq Ch^2 \|\phi\|_{L^2(\Omega(t))} \|\psi\|_{L^2(\Omega(t))}, \label{eqn: geometric perturbation g1}\\
        |g_\ell(\phi, \psi) - g(\phi, \psi)| &\leq Ch \|\phi\|_{L^2(\Omega(t))} \|\psi\|_{L^2(\Omega(t))}, \label{eqn: geometric perturbation g2}\\
        |a(\phi, \psi) - a_h(\phi^{-\ell}, \psi^{-\ell})| &\leq Ch^2 \|\phi\|_{H^1(\Omega(t))} \|\psi\|_{H^1(\Omega(t))}, \label{eqn: geometric perturbation a}\\
        |b_\ell(\phi, \psi) - b_h(\phi^{-\ell}, \psi^{-\ell})| &\leq Ch^2 \|\phi\|_{H^1(\Omega(t))} \|\psi\|_{H^1(\Omega(t))}, \label{eqn: geometric perturbation b1}\\
        |b_\ell(\phi, \psi) - b(\phi, \psi)| &\leq Ch \|\phi\|_{H^1(\Omega(t))} \|\psi\|_{H^1(\Omega(t))}, \label{eqn: geometric perturbation b2}
    \end{align}
    for all sufficiently smooth functions $\phi, \psi$ defined on $\Omega(t)$.
\end{lemma}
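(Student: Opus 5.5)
The plan is to reduce every estimate to an elementwise change of variables through the closest point projection $\mbf{p}(t;\cdot)$, which maps each $K(t)\in\mathcal{T}_h(t)$ onto its curved lift $K^\ell(t)\subset\Omega(t)$. Write $\mathrm{d}\sigma = \delta_h\,\mathrm{d}\sigma_h$ for the ratio of area elements and let $\mbf{P}=\mbb{I}-\boldsymbol{\nu}\otimes\boldsymbol{\nu}$ and $\mbf{P}_h=\mbb{I}-\boldsymbol{\nu}_h\otimes\boldsymbol{\nu}_h$ be the tangential projections. I will use the standard pointwise geometric facts for piecewise linear interpolation of a $C^2$ surface, with constants uniform in $t$ by the regularity of $\mbf{V}$ and uniform quasi-uniformity:
\[
\|d\|_{L^\infty(\Omega_h(t))}\le Ch^2,\qquad |\boldsymbol{\nu}-\boldsymbol{\nu}_h|\le Ch,\qquad |1-\delta_h|\le Ch^2 .
\]

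\textbf{The forms $m$ and $a$.} First, $m_h(\phi^{-\ell},\psi^{-\ell})=\int_{\Omega(t)}\phi\psi\,(1/\delta_h)^\ell$, so the $m$-estimate \eqref{eqn: geometric perturbation m} follows immediately from $|1-1/\delta_h|\le Ch^2$.

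For $a$, differentiate $\eta^{-\ell}=\eta\circ\mbf{p}$ to get $\gradoh\eta^{-\ell}=\mbf{P}_h(\mbb{I}-d\mbf{H})\mbf{P}\,(\grado\eta)\circ\mbf{p}$, where $\mbf{H}$ is the Weingarten map. This gives
\[
a_h(\phi^{-\ell},\psi^{-\ell})=\int_{\Omega(t)}\mbf{A}_h\grado\phi\cdot\grado\psi,\qquad \mbf{A}_h=\tfrac{1}{\delta_h}\mbf{P}(\mbb{I}-d\mbf{H})\mbf{P}_h(\mbb{I}-d\mbf{H})\mbf{P}.
\]
The key observation is that $\mbf{P}\mbf{P}_h\mbf{P}=\mbf{P}-(\mbf{P}\boldsymbol{\nu}_h)\otimes(\mbf{P}\boldsymbol{\nu}_h)$. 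Since $|\mbf{P}\boldsymbol{\nu}_h|=|\mbf{P}(\boldsymbol{\nu}_h-\boldsymbol{\nu})|\le Ch$, this term is quadratic in $h$. Hence $|\mbf{P}-\mbf{A}_h|\le Ch^2$, which yields \eqref{eqn: geometric perturbation a}.

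\textbf{The forms $g_\ell$ and $b_\ell$ versus $g_h$ and $b_h$.} The same transformation applied componentwise to $\mbf{V}_h=(\mbf{V}_h^\ell)^{-\ell}$ gives $\gradoh\mbf{V}_h=(\grado\mbf{V}_h^\ell)\circ\mbf{p}\,(\mbb{I}-d\mbf{H})\mbf{P}_h$. Therefore $\gradoh\cdot\mbf{V}_h=\mathrm{tr}\big(\grado\mbf{V}_h^\ell\,(\mbb{I}-d\mbf{H})\mbf{P}_h\big)$, while $\grado\cdot\mbf{V}_h^\ell=\mathrm{tr}(\grado\mbf{V}_h^\ell\mbf{P})$.

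The difference $\mbf{P}_h-\mbf{P}$ is only $O(h)$, and this is the main obstacle. Naively it gives an $O(h)$ bound, not the $O(h^2)$ claimed in \eqref{eqn: geometric perturbation g1} and \eqref{eqn: geometric perturbation b1}. I would exploit that $\grado\mbf{V}_h^\ell=(\grado\mbf{V}_h^\ell)\mbf{P}$ has range-side factor $\mbf{P}$. The trace then only sees $\mbf{P}\mbf{P}_h\mbf{P}-\mbf{P}$, which is $O(h^2)$ by the identity above, up to $O(h^2)$ terms from $d$. I expect the care needed to order the projections correctly, so that $\boldsymbol{\nu}_h$ always appears paired with $\mbf{P}$, to be the delicate point. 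For $b_h$ one must additionally handle the gradient transformations of $\phi,\psi$ exactly as in the $a$ case, so that all $O(h)$ contributions cancel in the same quadratic way. I will also need $\|\mbf{V}_h^\ell\|_{W^{1,\infty}}\le C$, which follows from interpolation stability.

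\textbf{The forms $g_\ell$ and $b_\ell$ versus $g$ and $b$.} Here no change of variables is involved. The difference is governed by $\|\grado(\mbf{V}_h^\ell-\mbf{V})\|_{L^\infty(\Omega(t))}$. Since $\mbf{V}_h$ is the Lagrange interpolant of $\mbf{V}\in\mbf{C}^2$, the standard $W^{1,\infty}$ interpolation estimate (lifted via \eqref{eqn: lift stability}) gives this is at most $Ch$. Hölder's inequality then gives \eqref{eqn: geometric perturbation g2} and \eqref{eqn: geometric perturbation b2}, and only first order is claimed there, consistent with this bound.
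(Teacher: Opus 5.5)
There is a genuine gap in the second-order bounds \eqref{eqn: geometric perturbation g1} and \eqref{eqn: geometric perturbation b1}. Your arguments for \eqref{eqn: geometric perturbation m} and \eqref{eqn: geometric perturbation a} are the standard Dziuk--Elliott ones. The paper gives no proof of its own; it only cites Lemma~9.24 of Elliott--Ranner.

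\textbf{The velocity in $g_\ell$, $b_\ell$.} You read $\mbf{V}_h^\ell$ as the pointwise lift $\mbf{V}_h\circ\mbf{p}^{-1}$. But $g_\ell$ and $b_\ell$ are the forms in Lemma~\ref{lemma: lifted transport theorem}, where $\matdev_\ell$ differentiates along the lifted discrete flow $t\mapsto\mbf{p}(t;\Phi^h(t;\mbf{x}))$. So the velocity there must be $(\partial_t\mbf{p}+\nabla\mbf{p}\,\mbf{V}_h)\circ\mbf{p}^{-1}$. This agrees with the pointwise lift up to $O(h^2)$ in $L^\infty$, but only up to $O(h)$ in $W^{1,\infty}$, and that difference matters.

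With the pointwise lift, \eqref{eqn: geometric perturbation g1} is in fact false. Take the rigidly rotating unit sphere, $\mbf{V}=A\mbf{x}$ with $A^T=-A$. Then $\mbf{V}_h=A\mbf{x}$ exactly, so $\gradoh\cdot\mbf{V}_h=0$. On the other hand $\mbf{V}_h\circ\mbf{p}^{-1}(\mbf{y})=(1+\tilde d(\mbf{y}))A\mbf{y}$ with $\tilde d\coloneqq d\circ\mbf{p}^{-1}$. Its tangential divergence is $\grado\tilde d\cdot A\mbf{y}$, which is of size $h$ (not $h^2$) on a set of measure bounded below independently of $h$.

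\textbf{Why the cancellation fails.} Write $\mbf{W}\coloneqq\mbf{V}_h^\ell$. The identity $(\grado\mbf{W})\mbf{P}=\grado\mbf{W}$ only reduces the discrepancy to
\[
\mathrm{tr}\bigl(\grado\mbf{W}\,(\mbf{P}_h-\mbf{P})\bigr)=\mathrm{tr}\bigl(\grado\mbf{W}\,(\mbf{P}\mbf{P}_h-\mbf{P})\bigr)=-\boldsymbol{\nu}_h\cdot(\grado\mbf{W})\mbf{P}\boldsymbol{\nu}_h ,
\]
and $\mbf{P}\mbf{P}_h-\mbf{P}=-(\mbf{P}\boldsymbol{\nu}_h)\otimes\boldsymbol{\nu}_h$ is only $O(h)$. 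Reaching $\mbf{P}\mbf{P}_h\mbf{P}-\mbf{P}$ would also require $\mbf{P}\grado\mbf{W}=\grado\mbf{W}$. That fails, because $(\grado\mbf{W})^T\boldsymbol{\nu}=\grado(\mbf{W}\cdot\boldsymbol{\nu})-\mbf{H}\mbf{W}$ is $O(1)$ whenever the velocity has a varying normal part, or a tangential part on a curved surface. The leftover $-\boldsymbol{\nu}\cdot(\grado\mbf{W})\mbf{P}(\boldsymbol{\nu}_h-\boldsymbol{\nu})$ is genuinely $O(h)$ pointwise. The same term defeats your sketch for $b$.

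\textbf{The missing idea.} These two bounds are dynamic statements, not frozen-time ones. The standard route (Dziuk--Elliott, and the cited Lemma~9.24) is as follows.
Take $\eta,\zeta$ transported by the discrete flow, so $\matdev_h\eta=\matdev_h\zeta=0$. Differentiate the exact identities $m(\eta^\ell,\zeta^\ell)=\int_{\Omega_h}\eta\zeta\,\delta_h$ and $a(\eta^\ell,\zeta^\ell)=\int_{\Omega_h}\mbf{Q}_h\gradoh\eta\cdot\gradoh\zeta$. Here $\mbf{Q}_h$ is the analogue of your $\mbf{A}_h$ for the reverse change of variables, so $\mbf{Q}_h-\mbf{P}_h=O(h^2)$. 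Use Lemma~\ref{lemma: lifted transport theorem} on the left and Lemma~\ref{lemma: discrete transport theorem}, in its elementwise-smooth form, on the right. This gives
\[
g_\ell(\eta^\ell,\zeta^\ell)-g_h(\eta,\zeta)=\int_{\Omega_h}\eta\zeta\bigl(\matdev_h\delta_h+(\delta_h-1)\gradoh\cdot\mbf{V}_h\bigr),
\]
and an analogous formula for $b_\ell-b_h$ involving $\mbf{P}_h\,\matdev_h(\mbf{Q}_h-\mbf{P}_h)\,\mbf{P}_h$ and $\mbf{Q}_h-\mbf{P}_h$.

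The new ingredient is then
\[
\|\matdev_h\delta_h\|_{L^\infty}+\|\mbf{P}_h\,\matdev_h(\mbf{Q}_h-\mbf{P}_h)\,\mbf{P}_h\|_{L^\infty}\le Ch^2 .
\]
This rests on the nodes staying on $\Omega(t)$ and $\mbf{V}_h$ interpolating $\mbf{V}$. Along the discrete flow, $\matdev_h d=\boldsymbol{\nu}\cdot(\mbf{V}_h-\mbf{V}\circ\mbf{p})=O(h^2)$. Also $1-\boldsymbol{\nu}\cdot\boldsymbol{\nu}_h=\tfrac12|\boldsymbol{\nu}-\boldsymbol{\nu}_h|^2$ with $\matdev_h(\boldsymbol{\nu}-\boldsymbol{\nu}_h)=O(h)$. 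In the rotating example $\delta_h$ is simply transported, so $\matdev_h\delta_h=0$, consistent with $g_\ell=g_h=0$ for the correct velocity.

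Once the correct lifted velocity is used, your proof of \eqref{eqn: geometric perturbation g2} and \eqref{eqn: geometric perturbation b2} needs one extra line. You must show that this velocity differs from the lifted interpolant of $\mbf{V}$ by a field whose $W^{1,\infty}$ norm is $O(h)$.
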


\subsection{Inverse Laplacians}

In our later analysis we will make extensive use of inverse Laplacian operators.
We now recall the definition of these operators.

\begin{definition}
    \label{defn: inverse laplace Omega}
    Let $z \in L^2(\Omega(t))$ be a function such that $\int_{\Omega(t)} z = 0$.
    Then one defines $\mathcal{G}z \in H^1(\Omega(t))$ to be the unique solution of
    \begin{align*}
    \int_{\Omega(t)} \grado \mathcal{G}z \cdot \grado \phi &= \int_{\Omega(t)}z \phi \qquad \forall \phi \in H^1(\Omega(t)),\\
    \int_{\Omega(t)} \mathcal{G}z &= 0.
    \end{align*}
    This defines a norm on the subspace of $L^2(\Omega(t))$, consisting of functions $z \in L^2(\Omega(t))$ with vanishing mean value, by
    \[ \|z\|_{-1,t} \coloneqq \sqrt{a(\mathcal{G}z, \mathcal{G}z)} = \sqrt{m(z,\mathcal{G}z)}. \]
\end{definition}

We also have the following result concerning time-differentiability.

\begin{lemma}[{\cite[Lemma 4.3]{Elliott2015CahnHilliard}}]
    \label{lemma: inverse laplacian time derivative}
    If $z \in H^1_{H^{-1}}$ then $\mathcal{G}z \in H^1_{H^1}$.
\end{lemma}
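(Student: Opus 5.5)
The strategy is to pull the problem back to the fixed reference surface $\Omega(0)$ via $\Phi_t$ and differentiate the weak formulation of Definition~\ref{defn: inverse laplace Omega} in time. There it becomes an elliptic problem with time-dependent but smooth coefficients, so the time derivative of $\mathcal{G}z$ can be controlled. First we obtain a candidate for $\matdev \mathcal{G}z$ by formally differentiating
\[
a(\mathcal{G}z, \phi) = m(z,\phi)
\]
for test functions $\phi$ with $\matdev \phi = 0$, using both parts of Lemma~\ref{lemma: transport theorem}. Writing $w = \matdev \mathcal{G}z$, this gives
\[
a(w, \phi) = m_*(\matdev z, \phi) + g(z,\phi) - b(\mathcal{G}z, \phi) \qquad \forall \phi \in H^1(\Omega(t)).
\]
The right-hand side defines a functional in $H^{-1}(\Omega(t))$. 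Its norm is bounded by
\[
\|\matdev z\|_{H^{-1}} + \|z\|_{L^2} + \|\mathcal{G}z\|_{H^1},
\]
using $\mbf{V}\in C^1([0,T];\mbf{C}^2)$ to bound $\grado\cdot\mbf{V}$ and $\grado\mbf{V}$ in $L^\infty$. Testing with $\phi=1$ gives compatibility: the left-hand side vanishes, and the right-hand side equals $\frac{d}{dt}\int_{\Omega(t)} z = 0$ (using $\matdev 1 = 0$ and the transport theorem, since $z$ has zero mean for each $t$). Hence $w$ is determined by Lax--Milgram on mean-zero functions, plus a constant. The constant is fixed by differentiating $\int_{\Omega(t)} \mathcal{G}z = 0$, which gives
\[
\int_{\Omega(t)} w = -\int_{\Omega(t)} \mathcal{G}z\,\grado\cdot\mbf{V}.
\]
The resulting bound is
\[
\|w\|_{H^1} \le C\big(\|\matdev z\|_{H^{-1}} + \|z\|_{L^2}\big),
\]
and integrating in time gives $w \in L^2_{H^1}$.

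To make this rigorous, we argue by density. Smooth functions (in the sense of $\Phi_{-t}z \in C^1([0,T];H^1(\Omega(0)))$, with mean zero) are dense in $H^1_{H^{-1}}$, by the evolving-space theory in \cite{Alphonse2023EvolvingBanach}, and the mean-zero constraint is preserved by subtracting means. For such $z$, the pulled-back problem on $\Omega(0)$ has coefficients that are $C^1$ in $t$: a metric tensor $A(t)$ and a Jacobian $J(t)$ built from $\Phi$. Differentiating the pulled-back problem through difference quotients shows that $\Phi_{-t}\mathcal{G}z$ is $C^1$ in $H^1(\Omega(0))$, with derivative solving the identity above.

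The a priori bound $\|\mathcal{G}z\|_{L^2_{H^1}} + \|\matdev \mathcal{G}z\|_{L^2_{H^1}} \le C\|z\|_{H^1_{H^{-1}}}$ then extends by linearity and density. Completeness of $H^1_{H^1}$, together with closedness of the weak material derivative, passes the bound to the limit.

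The main obstacle is the difference-quotient step. One needs the uniform coercivity of the pulled-back bilinear form, together with the Lipschitz/$C^1$ dependence of $A(t)$ and $J(t)$, to control
\[
\tfrac{1}{\delta}\big(\Phi_{-(t+\delta)}\mathcal{G}z - \Phi_{-t}\mathcal{G}z\big)
\]
in $H^1(\Omega(0))$. The mean-zero normalisation must also be handled, since the mean is taken with the time-varying weight $J(t)$. The cleanest route is to first treat the problem with a Poincaré-type regularisation, $a + m$, and then correct by constants.
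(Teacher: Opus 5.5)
The paper gives no argument for this lemma. It is quoted from Elliott--Ranner~\cite{Elliott2015CahnHilliard} and used as a black box, mainly to justify applying the second part of Lemma~\ref{lemma: transport theorem} to $\G\Eh$ in the proof of Theorem~\ref{thm: error theorem}.

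Your proposal is a correct self-contained proof along standard lines. It consists of:
\begin{enumerate}
\item the formal identity
\[
a(\matdev \G z, \phi) + b(\G z, \phi) = m_*(\matdev z, \phi) + g(z, \phi) \quad \forall \phi \in H^1(\Omega(t)), \qquad \int_{\Omega(t)} \matdev \G z = -g(\G z, 1);
\]
\item the compatibility check at $\phi = 1$, which rightly uses the hypothesis $\int_{\Omega(t)} z(t) = 0$ that the statement leaves implicit;
\item the uniform bound $\|\matdev \G z\|_{H^1(\Omega(t))} \leq C(\|\matdev z\|_{H^{-1}(\Omega(t))} + \|z\|_{L^2(\Omega(t))})$;
\item the rigorous justification via pull-back, the smooth case, linearity, density and closedness of the weak material derivative.
\end{enumerate}
Compared with the bare citation, your route also gives an explicit characterisation of $\matdev \G z$. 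Taking $\phi = \G z$ in your identity and using Lemma~\ref{lemma: transport theorem} gives
\[
m_*(\matdev z, \G z) + g(z, \G z) = \frac{1}{2} \ddt{} \|z\|_{-1,t}^2 + \frac{1}{2} b(\G z, \G z),
\]
which is exactly the identity the paper uses in the proof of Theorem~\ref{thm: error theorem}.

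There are two technical points to fix.

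\textbf{Normalisation.} Your closing suggestion to first ``treat the problem with $a+m$'' and then correct by constants does not work as literally stated. The solution of $a(u,\phi) + m(u,\phi) = m(z,\phi)$ is $v = (I - \lapo)^{-1} z$, and $-\lapo(\G z - v) = v$, so $\G z - v$ is not constant unless $z = 0$. Either of the following does work:
\begin{itemize}
\item use the form $a(u,\phi) + m(u,1)\,m(\phi,1)$, which is uniformly coercive on $H^1(\Omega(t))$ and whose solution for mean-zero data is exactly $\G z$;
\item solve the pulled-back problem on the fixed subspace $\{\tilde u \in H^1(\Omega(0)) : \int_{\Omega(0)} \tilde u = 0\}$, then subtract the $C^1$-in-time constant $\mval{\Phi_t \tilde u}{\Omega(t)}$.
\end{itemize}
In either formulation the pulled-back operator is $C^1$ in $t$ in operator norm and uniformly invertible, so its inverse is $C^1$. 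The difference-quotient step you flag as the main obstacle then becomes routine.

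\textbf{Density.} Make sure the density result you invoke covers the paper's $H^1_{H^{-1}}$, whose base space is $L^2_{L^2}$ rather than $L^2_{H^1}$. If it does not, the density follows by pulling back and mollifying in time: the weak material derivative corresponds to the weak time derivative of $\Phi_{-t} z$ up to multiplication by the smooth Jacobian $J(t)$. The resulting approximants lie in $C^1([0,T];L^2(\Omega(0)))$ (after re-subtracting means), which already suffices for your smooth-case argument, so $H^1$-valued approximants are not needed.
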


We will also require notions of a finite element inverse Laplacian defined on $\Omega_h(t)$ as in the following definitions.
\begin{definition}
    \label{defn: discrete inverse laplace}
    Let $z_h \in S_h(t)$ be a function such that $\int_{\Omega_h(t)} z_h = 0$.
    Then one defines $\GSh z_h \in S_h(t)$ to be the unique solution of
    \begin{align*}
        \int_{\Omega_h(t)} \gradoh \GSh z_h \cdot \gradoh \phi_h &= \int_{\Omega_h(t)}z_h \phi_h \qquad \forall \phi_h \in S_h(t),\\
        \int_{\Omega_h(t)} \GSh z_h &= 0.
    \end{align*}
    This defines a norm on the subspace of $S_h(t)$, consisting of functions $z_h \in S_h(t)$ with vanishing mean value, by
    \[ \|z_h\|_{S_h(t)} \coloneqq \sqrt{a_h(\GSh z_h, \GSh z_h)} = \sqrt{m_h(z_h,\GSh z_h)}. \]
\end{definition}

Next we recall that for $\Sigma(t)$ a $\mathcal{H}^2$-measurable set we shall denote its $\mathcal{H}^2$-measure by $|\Sigma(t)|$.
For such a region, and a function $z \in L^1(\Sigma(t))$, we define the mean value of $z$ over $\Sigma(t)$ as
\[ \mval{z}{\Sigma(t)} \coloneqq \frac{1}{|\Sigma(t)|}\int_{\Sigma(t)} z. \]

\begin{definition}
    \label{defn: inverse laplace Omegah}
    Let $z \in L^2(\Omega(t))$ be a function such that $\int_{\Omega(t)} z = 0$.
    Then we define $\mathcal{G}_h z \in S_h(t)$ to be the unique solution of
    \begin{align*}
    \int_{\Omega_h(t)} \gradoh \mathcal{G}_h z \cdot \gradoh \phi_h &= \int_{\Omega_h(t)} \left( z^{-\ell} - \mval{z^{-\ell}}{\Omega_h(t)}\right) \phi_h \qquad \forall \phi_h \in S_h(t),\\
    \int_{\Omega_h(t)} \Gh z &= 0.
    \end{align*}
    This defines a norm on the subspace of $L^2(\Omega(t))$, consisting of functions $z \in L^2(\Omega(t))$ with vanishing mean value, by
    \[ \|z\|_{-h,t} \coloneqq \sqrt{a_h(\mathcal{G}_hz, \mathcal{G}_hz)} = \sqrt{m_h\left(z^{-\ell} - \mval{z^{-\ell}}{\Omega_h(t)}, \mathcal{G}_h z\right)}. \]
\end{definition}

We can relate $\G$ and $\Gh$ through the following error estimate, in analogy to the usual inverse Laplacians on Euclidean domains.

\begin{lemma}
    \label{lemma: inverse laplacian error}
    Let $z \in L^2(\Omega(t))$ be such that $\int_{\Omega(t))} z = 0$.
    Then, for sufficiently small $h$, one has
    \begin{align}
        \|\mathcal{G}z - \mathcal{G}_h^\ell z\|_{L^2(\Omega(t))} + h \|\grado(\mathcal{G}z - \mathcal{G}_h^\ell z)\|_{L^2(\Omega(t))} \leq Ch^2 \|z\|_{L^2(\Omega(t))}, \label{eqn: inverse laplacian error}
    \end{align}
    for a constant, $C$, independent of $t$, $z$ and $h$.
    Here we are using notation $\mathcal{G}_h^\ell z \coloneqq (\mathcal{G}_h z)^\ell$.
\end{lemma}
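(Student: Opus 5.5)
The estimate is the surface finite element error bound for a Poisson problem, together with a geometric perturbation argument for the discrete right-hand side. Write $w \coloneqq \G z$, which solves $-\lapo w = z$ with zero mean. Since $\Omega(t)$ is $C^2$ uniformly in $t$, elliptic regularity on closed surfaces gives $\|w\|_{H^2(\Omega(t))} \leq C\|z\|_{L^2(\Omega(t))}$. The constant is uniform in $t$ because the geometry is controlled uniformly through $\Phi$.

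I would compare $\Gh z$ with the Ritz-type projection $R_h w \in S_h(t)$ defined by
\[ a_h(R_h w, \phi_h) = a(w, \phi_h^\ell) \quad \forall \phi_h \in S_h(t), \qquad \int_{\Omega_h(t)} R_h w = 0. \]
This is solvable because $a(w,1) = 0$. By the standard ESFEM Ritz estimates (Dziuk--Elliott), which use the interpolation estimate, Céa's lemma and \eqref{eqn: geometric perturbation a},
\[ \|w - (R_h w)^\ell\|_{L^2(\Omega(t))} + h\|\grado(w - (R_h w)^\ell)\|_{L^2(\Omega(t))} \leq Ch^2\|w\|_{H^2(\Omega(t))}. \]
The mean-value normalisation differs on $\Omega(t)$ and $\Omega_h(t)$, but only by $O(h^2)\|\cdot\|_{L^2}$ through \eqref{eqn: geometric perturbation m}. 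So it remains to bound $\theta_h \coloneqq \Gh z - R_h w$.

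For any $\phi_h \in S_h(t)$, using $a(w,\phi_h^\ell) = m(z,\phi_h^\ell)$, we have
\[ a_h(\theta_h, \phi_h) = m_h\bigl(z^{-\ell} - \mval{z^{-\ell}}{\Omega_h(t)}, \phi_h\bigr) - m(z, \phi_h^\ell). \]
By \eqref{eqn: geometric perturbation m}, $|m_h(z^{-\ell},\phi_h) - m(z,\phi_h^\ell)| \leq Ch^2\|z\|_{L^2}\|\phi_h^\ell\|_{L^2}$. The mean-value term is $|\Omega_h(t)|^{-1} m_h(z^{-\ell},1)\, m_h(1,\phi_h)$. Since $m(z,1)=0$, \eqref{eqn: geometric perturbation m} with $\psi = 1$ gives $|m_h(z^{-\ell},1)| \leq Ch^2\|z\|_{L^2}$. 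Hence $|a_h(\theta_h,\phi_h)| \leq Ch^2\|z\|_{L^2}\|\phi_h\|_{L^2(\Omega_h)}$.

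Now test with $\phi_h = \theta_h$. Because $\theta_h$ has zero mean on $\Omega_h(t)$, a discrete Poincaré inequality with $h$-uniform constant holds (transfer the continuous one via \eqref{eqn: lift stability} and the $O(h^2)$ mean perturbation). This gives $\|\theta_h\|_{H^1} \leq Ch^2\|z\|_{L^2}$. Lifting and using \eqref{eqn: lift stability} then yields both the $L^2$ and $H^1$ parts of \eqref{eqn: inverse laplacian error}; the $H^1$ part is in fact better than required. The triangle inequality completes the proof.

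The main obstacle is the $L^2$ estimate $O(h^2)$ for the Ritz projection, which needs an Aubin--Nitsche duality argument carrying the geometric consistency errors. I would cite this from \cite{Elliott2021EvolvingFiniteElement} rather than reprove it, checking only that the constants are uniform in $t$ under the uniform quasi-uniformity and $C^2$ assumptions. A secondary point is ensuring that "sufficiently small $h$" makes the lift and Poincaré constants uniform.
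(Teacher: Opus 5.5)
Your proof is correct and follows the same approach as the paper. The paper cites the Dziuk--Elliott surface finite element bound (Theorem 4.9), which leaves only a data-perturbation term equal to the mean value of $z^{-\ell}$ over $\Omega_h(t)$, and bounds that term through $|m_h(z^{-\ell},1) - m(z,1)| \leq Ch^2\|z\|_{L^2(\Omega(t))}$ using $\int_{\Omega(t)} z = 0$, exactly as you do; your Ritz-projection splitting with the $O(h^2)$ estimate for $\theta_h$ simply unpacks that cited theorem and shows explicitly where the consistency error $m_h(z^{-\ell},\phi_h) - m(z,\phi_h^\ell)$ enters.
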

\begin{proof}
    This is essentially the standard error bound for (piecewise linear) surface finite element approximations of the Laplace equation, but we spell out some of the details nonetheless.
    One appeals to~\cite[Theorem 4.9]{Dziuk2013SurfacePDEs} to see that
    \begin{align*}
        \|\mathcal{G}z - \mathcal{G}_h^\ell z\|_{L^2(\Omega(t))} &\leq Ch^2\|z\|_{L^2(\Omega(t))} + C\left\| \mval{z^{-\ell}}{\Omega_h(t))} \right\|_{L^2(\Omega(t))},\\
        \|\grado(\mathcal{G}z - \mathcal{G}_h^\ell z))\|_{L^2(\Omega(t))} &\leq Ch\|z\|_{L^2(\Omega(t))} + C\left\| \mval{z^{-\ell}}{\Omega_h(t))} \right\|_{L^2(\Omega(t))},
    \end{align*}
    and so we need only bound this mean value term.
    For this we use Lemma~\ref{lemma: geometric perturbations} to see that
    \[ \left| \mval{z^{-\ell}}{\Omega_h(t)} \right| = \frac{1}{|\Gamma_h(t)|}\left| m_h(z^{-\ell},1) - m(z,1) \right| \leq Ch^2 \|z\|_{L^2(\Omega(t))}, \]
    where we have also used the assumption that $\int_{\Omega(t)} z = 0$.
    \eqref{eqn: inverse laplacian error} now follows.
\end{proof}

One can similarly relate $\mathcal{G}$ and $\mathcal{G}_h$ to $\GSh$, but this will not be useful in our subsequent analysis.

\begin{remark}
    \label{remark: discrete inverse laplacian}
    Arguably a more natural definition for $\mathcal{G}_h z \in S_h(t)$ would be the unique solution of
    \begin{align*}
        \int_{\Omega_h(t)} \gradoh \mathcal{G}_h z \cdot \gradoh \phi_h &= \int_{\Omega(t)} z \phi_h^\ell \qquad \forall \phi_h \in S_h(t),\\
        \int_{\Omega_h(t)} \mathcal{G}_h z &= 0.
    \end{align*}
    However, in this case it appears that this notion of inverse Laplacian would require higher regularity and yield a lower order error estimate --- hence we instead use Definition~\ref{defn: inverse laplace Omegah}.
    To see this, one observes that
    \[ \int_{\Omega(t)} z \phi_h^\ell = \int_{\Omega_h(t)} \Lambda_h z  \phi_h, \]
    where $\Lambda_h: L^2(\Omega(t)) \rightarrow S_h(t)$ is the $L^2$ projection onto $S_h(t)$.
    In this case~\cite[Theorem 4.9]{Dziuk2013SurfacePDEs} now yields
    \begin{align*}
        \|\mathcal{G}z - \mathcal{G}_h^\ell z\|_{L^2(\Omega(t))} &\leq Ch^2\|z\|_{L^2(\Omega(t))} + C\|z - (\Lambda_h z)^\ell\|_{L^2(\Omega(t))},\\
        \|\grado(\mathcal{G}z - \mathcal{G}_h^\ell z)\|_{L^2(\Omega(t))} &\leq Ch\|z\|_{L^2(\Omega(t))} + C\|z - (\Lambda_h z)^\ell\|_{L^2(\Omega(t))},
    \end{align*}
    and since our mesh is uniformly quasi-uniform one finds that, cf.~\cite{Thomee2006GalerkinFEM},
    \[ \|z - (\Lambda_h z)^\ell\|_{L^2(\Omega(t))} \leq C h \|z\|_{H^1(\Omega(t))}, \]
    provided that $z \in H^1(\Omega(t))$.
\end{remark}

\section{A semi-discrete evolving surface finite element method}
\label{section: semidiscrete}
We now introduce our spatially-discrete finite element method to be analysed.
Given initial data, $e_h^0 \in S_h(0)$, one finds $e_h \in \mathcal{S}_h$, where
\[ \mathcal{S}_h \coloneqq \{ \chi_h \mid \Phi_{-t}^h \chi_h \in C^1([0,T];S_h(0)) \}, \]
such that
\begin{align}
	m_h(\matdev_h e_h, \phi_h) + g_h(e_h, \phi_h) + a_h(\mathcal{U}(e_h), \phi_h) = m_h(f_h, \phi_h) \quad \forall \phi_h \in S_h(t), \label{eqn: semidiscrete stefan}
\end{align}
for almost all $t \in [0,T]$, and such that $e_h(0) = e_h^0$.

\begin{remark}
    \label{remark: mass-lumping}
    In existing literature~\cite{Elliott1987StefanError,Elliott1982MovingBoundaryProblems,Nochetto1991ParabolicFBPs} on the Stefan problem on a flat domain one often assumes mesh acuteness and uses mass-lumped finite elements to allow for a discrete maximum principle.
    We refer the reader to~\cite{Barrenechea2024DiscreteMaximum} for an overview on discretisations allowing discrete maximum principles.
    On an evolving surface this is problematic, as it is known that the nodal evolution may cause an initially acute mesh to lose this property, cf.~\cite{Deckelnick2019HamiltonJacobi} and~\cite[Remark 2.16]{Elliott2026Logarithmic}.
    As such our analysis will avoid the use of discrete maximum principles.
    One may wish to explore alternate numerical methods for this problem --- we defer further discussion in this regard to Section~\ref{section: numerics}.
\end{remark}

\subsection{Stability}

\begin{lemma}
    \label{lemma: semidiscrete stability}
    Let $\beta$ satisfy Assumption~\ref{assumption: enthalpy}.
    There exists a function $e_h \in \mathcal{S}_h$ solving~\eqref{eqn: semidiscrete stefan} for all $\phi_h \in S_h(t)$ for almost all $t \in [0,T]$ and such that $e_h(0) = e_h^0$.
    Moreover this function is such that
    \begin{gather}
    \begin{multlined}
        \sup_{t \in [0,T]} \|e_h\|_{L^2(\Omega_h(t))}^2 + \frac{1}{C_{\mathcal{U}}} \int_0^T \|\gradoh \mathcal{U}(e_h)\|_{L^2(\Omega_h(t))}^2\\
        \leq C\left(\|e_h^0\|_{L^2(\Omega_h(0))}^2 + \int_0^T \|f_h\|_{L^2(\Omega_h(t))}^2 \right), \label{eqn: semidiscrete energy estimate1}
    \end{multlined}\\
        \int_{0}^T \|\matdev_h e_h\|_{H^{-1}(\Omega_h(t))}^2 \leq C\left(\|e_h^0\|_{L^2(\Omega_h(0))}^2 + \int_0^T \|f_h\|_{L^2(\Omega_h(t))}^2 \right), \label{eqn: semidiscrete energy estimate2}
    \end{gather}
    for constants $C$ independent of $e_h$ and $h$, but depending on $T$. 
\end{lemma}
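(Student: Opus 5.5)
Existence will follow from Picard--Lindelöf applied to the finite-dimensional ODE system, with the a priori bound ruling out blow-up. The energy estimates come from testing \eqref{eqn: semidiscrete stefan} with $\phi_h = e_h$, using the strong monotonicity \eqref{eqn: beta strong monotonicity} in the diffusion term, and then estimating the $H^{-1}$ norm of $\matdev_h e_h$ by duality.

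\textbf{Step 1 (ODE system).} Pull back to $\Omega_h(0)$ through $\Phi^h$ and write $e_h(t)=\sum_j E_j(t)\chi_j(t)$ in the nodal basis, which is transported by $\matdev_h\chi_j=0$. Then \eqref{eqn: semidiscrete stefan} becomes
\[
M(t)\dot E+G(t)E+N(t,E)=F(t),
\]
where $M(t)$ is the mass matrix and $N(t,E)_i=a_h(\mathcal{U}(e_h),\chi_i)$. The mass matrix $M(t)$ is uniformly positive definite for fixed $h$. Since $\mathcal{U}$ is Lipschitz, $\mathcal{U}(e_h)\in W^{1,\infty}$ on each element, with $\|\gradoh(\mathcal{U}(e_h)-\mathcal{U}(\tilde e_h))\|$ controlled by $C_{\mathcal U}$ times $h$-dependent inverse-type constants and $|E-\tilde E|$. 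So $N$ is Lipschitz in $E$, continuous in $t$, and Carathéodory/Picard--Lindelöf gives a local solution. A uniform-in-time bound from Step 2 then extends it to $[0,T]$.

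I expect this step to be the delicate one. The chain rule $\gradoh\mathcal{U}(e_h)=\mathcal{U}'(e_h)\gradoh e_h$ holds only a.e., and the resulting regularity is only $W^{1,\infty}$-in-time. I would try the Lipschitz argument directly on the vector field first. If that is awkward, I would regularise $\mathcal{U}$ by a smooth mollification and pass to the limit using the uniform bounds.

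\textbf{Step 2 (energy estimate \eqref{eqn: semidiscrete energy estimate1}).} Take $\phi_h=e_h$. Lemma~\ref{lemma: discrete transport theorem} gives $m_h(\matdev_h e_h,e_h)=\tfrac12\ddt{}\|e_h\|^2-\tfrac12 g_h(e_h,e_h)$. For the diffusion term, $\gradoh\mathcal{U}(e_h)=\mathcal{U}'(e_h)\gradoh e_h$ a.e. with $0\le \mathcal{U}'\le C_{\mathcal U}$, so
\[
a_h(\mathcal{U}(e_h),e_h)=\int \mathcal{U}'(e_h)|\gradoh e_h|^2\ge \frac1{C_{\mathcal U}}\|\gradoh \mathcal{U}(e_h)\|^2 .
\]
This is exactly the differential form of \eqref{eqn: beta strong monotonicity}: one can also obtain it elementwise from difference quotients. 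Since $\|\gradoh\cdot\mbf{V}_h\|_\infty\le C$ uniformly in $h$ (the Lagrange interpolant of a $C^2$ field), the $g_h$ terms are bounded by $C\|e_h\|^2$. Young's inequality gives $m_h(f_h,e_h)\le\tfrac12\|f_h\|^2+\tfrac12\|e_h\|^2$, and Grönwall then yields \eqref{eqn: semidiscrete energy estimate1} with a constant $e^{CT}$.

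\textbf{Step 3 (time derivative estimate \eqref{eqn: semidiscrete energy estimate2}).} Take $\phi\in H^1(\Omega_h(t))$ and let $P_h\phi$ be the $L^2(\Omega_h(t))$-projection onto $S_h(t)$. On a uniformly quasi-uniform mesh this projection is $H^1$-stable with constants independent of $t$ and $h$. Since $\matdev_h e_h\in S_h(t)$, we have $m_h(\matdev_h e_h,\phi)=m_h(\matdev_h e_h,P_h\phi)$, which by \eqref{eqn: semidiscrete stefan} equals $m_h(f_h,P_h\phi)-g_h(e_h,P_h\phi)-a_h(\mathcal U(e_h),P_h\phi)$. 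Each term is bounded by
\[
C\big(\|f_h\|+\|e_h\|+\|\gradoh\mathcal U(e_h)\|\big)\|\phi\|_{H^1}.
\]
Squaring, integrating in time and applying Step 2 gives \eqref{eqn: semidiscrete energy estimate2}, where the factor $C_{\mathcal U}$ is absorbed into $C$.
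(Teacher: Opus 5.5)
Your proof follows the paper's route: ODE existence extended by the a priori bound, testing with $e_h$, the discrete transport theorem, the chain-rule bound $a_h(\mathcal{U}(e_h),e_h)\ge C_{\mathcal U}^{-1}\|\gradoh\mathcal{U}(e_h)\|^2$, then Young, Gr\"onwall and duality. That chain-rule bound actually comes from $0\le\mathcal{U}'\le C_{\mathcal U}$, i.e.\ (A2), as in the paper, not from \eqref{eqn: beta strong monotonicity}.

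The one genuine difference is Step~3, and it is an improvement. The paper bounds $m_h(\matdev_h e_h,\phi_h)/\|\phi_h\|_{H^1(\Omega_h(t))}$ only over $\phi_h\in S_h(t)$, which controls a discrete dual norm. Your use of the $L^2(\Omega_h(t))$-projection, $H^1$-stable uniformly in $h$ and $t$ on uniformly quasi-uniform meshes, is exactly what turns this into the true $H^{-1}(\Omega_h(t))$ norm. That is the version needed later, e.g.\ in \eqref{eqn: new L2 projection time derivative bound}, where the test functions are lifts $\phi^{-\ell}$ rather than finite element functions.

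One justification in Step~1 is wrong as written, although its conclusion is true. When $\mathcal{U}$ has a kink, $\|\gradoh(\mathcal{U}(e_h)-\mathcal{U}(\tilde e_h))\|$ is not Lipschitz in the nodal values, in $L^2$ or, still less, in $L^\infty$. Take an element where $\gradoh e_h=\mathbf{g}\neq 0$, the classical $\mathcal{U}$, and shift all nodal values by $\delta$. Then $\gradoh\mathcal{U}(e_h)$ changes by $\mathbf{g}$ on a strip of width $\delta/|\mathbf{g}|$ along the kink level line. So the $L^2$ difference is of order $\delta^{1/2}$, and the $L^\infty$ difference is of order one.

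What is Lipschitz is the vector field itself. Since $\gradoh\chi_i$ is constant on each flat element $K$, the elementwise divergence theorem gives $a_h(\mathcal{U}(e_h),\chi_i)=\sum_K \gradoh\chi_i|_K\cdot\int_{\partial K}\mathcal{U}(e_h)\,\boldsymbol{\mu}_K$, with $\boldsymbol{\mu}_K$ the outward conormal. This is Lipschitz in $E$ with constant $C_{\mathcal U}$ times $h$-dependent geometric factors, for any Lipschitz $\mathcal{U}$, and it is continuous in $t$. Picard--Lindel\"of (or Carath\'eodory, for data merely measurable in time) then applies directly and gives a unique global solution. So neither your concern about $W^{1,\infty}$-in-time regularity nor the mollification fallback is needed.
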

\begin{proof}
    The local-in-time existence of such a function is a straightforward result of standard ODE theory, since the bilinear forms are differentiable in $t$, and the nonlinearities are Lipschitz continuous.
    As is standard, we now show this solution exists on the interval $[0,T]$ by establishing energy estimates.
    For this, we test~\eqref{eqn: semidiscrete stefan} with $e_h$ to find that
    \begin{equation}
        m_h(\matdev_h e_h, e_h) + g_h(e_h, e_h) + a_h(\mathcal{U}(e_h), e_h) = m_h(f_h, e_h). \label{eqn: semidisc energy pf1}
    \end{equation}
    By using Lemma~\ref{lemma: discrete transport theorem} we find that
    \[ m_h(\matdev_h e_h, e_h) + g_h(e_h, e_h) = \frac{1}{2}\ddt{} \|e_h\|_{L^2(\Omega_h(t))}^2 + \frac{1}{2} g_h(e_h, e_h). \]
    We now claim that the $a_h(\cdot, \cdot)$ term is bounded below by
    \[ a_h(\mathcal{U}(e_h), e_h) \geq \frac{1}{C_\mathcal{U}}a_h(\mathcal{U}(e_h), \mathcal{U}(e_h)). \]
    To see this, observe that by H\"older's inequality, and the Lipschitz continuity of $\mathcal{U}(\cdot)$, one has
    \[ a_h(\mathcal{U}(e_h), \mathcal{U}(e_h)) = \int_{\Omega_h(t)} \mathcal{U}'(e_h) \gradoh e_h \cdot \gradoh \mathcal{U}(e_h) \leq C_\mathcal{U} \int_{\Omega_h(t)} \left| \gradoh e_h \cdot \gradoh \mathcal{U}(e_h) \right|, \]
    and this rightmost integral may be written as
    \[\int_{\Omega_h(t)} \left| \gradoh e_h \cdot \gradoh \mathcal{U}(e_h) \right| = \int_{\Omega_h(t)} \gradoh e_h \cdot \gradoh \mathcal{U}(e_h), \]
    since $\gradoh \mathcal{U}(e_h) = \mathcal{U}'(e_h) \gradoh e_h $ and $\mathcal{U}'(\cdot)$ is nonnegative.
    We now combine these two facts in \eqref{eqn: semidisc energy pf1} to obtain
    \begin{equation}
        \frac{1}{2}\ddt{} \|e_h\|_{L^2(\Omega_h(t))}^2 + \frac{1}{C_\mathcal{U}} \|\gradoh \mathcal{U}(e_h)\|_{L^2(\Omega_h(t))}^2 \leq m_h(f_h, e_h) -\frac{1}{2} g_h(e_h, e_h). \label{eqn: semidisc energy pf2}
    \end{equation}
    By using Young's inequality and the smoothness of $\mbf{V}$ one finds that
    \begin{equation}
        \frac{1}{2}\ddt{} \|e_h\|_{L^2(\Omega_h(t))}^2 + \frac{1}{C_\mathcal{U}} \|\gradoh \mathcal{U}(e_h)\|_{L^2(\Omega_h(t))}^2 \leq \|f_h\|_{L^2(\Omega_h(t))}^2 + C\|e_h\|_{L^2(\Omega_h(t))}^2, \label{eqn: semidisc energy pf3}
    \end{equation}
    for a constant, $C$, depending on $\mbf{V}$.
    \eqref{eqn: semidiscrete energy estimate1} now follows by integrating~\eqref{eqn: semidisc energy pf3} in time and using a Gr\"onwall inequality.
    One now obtains~\eqref{eqn: semidiscrete energy estimate2} as a consequence of ~\eqref{eqn: semidiscrete energy estimate1} since
    \[ \frac{m_h(\matdev_h e_h, \phi_h)}{\|\phi_h\|_{H^1(\Omega_h(t))}} \leq \|\gradoh \cdot \mbf{V}_h\|_{L^\infty(\Omega_h(t))}\|e_h\|_{L^2(\Omega_h(t))} + \|\gradoh \mathcal{U}(e_h)\|_{L^2(\Omega_h(t)))} + \|f_h\|_{L^2(\Omega_h(t))}. \]
\end{proof}
Next we will show a result concerning continuous dependence on the data.
For this we require the following Ritz projection and error bound.
\begin{definition}
    \label{defn: ritz projection}
    Given $z_h \in H^1(\Omega_h(t))$ we define the Ritz projection, $R_h z_h \in S_h(t)$, to be the unique function such that
    \begin{align*}
        a_h(R_h z_h, \phi_h) &= a_h(z_h, \phi_h) \quad \forall \phi_h \in S_h(t),\\
        \mval{R_h z_h}{\Omega_h(t)} &= \mval{z_h}{\Omega_h(t)}.
    \end{align*}
\end{definition}
\begin{lemma}
    \label{lemma: ritz error}
    For $z_h \in H^1(\Omega_h(t))$ and $R_h z_h \in S_h(t)$ as defined above one has
    \begin{align}
        \|R_h z_h\|_{H^1(\Omega_h(t))} &\leq C \|z_h\|_{H^1(\Omega_h(t))}, \label{eqn: ritz stability}\\
        \|z_h - R_h z_h\|_{L^2(\Omega_h(t))} &\leq Ch\|z_h\|_{H^1(\Omega_h(t))}, \label{eqn: ritz error}
    \end{align}
    where $C$ denotes a constant independent of $h$ and $t \in [0,T]$.
\end{lemma}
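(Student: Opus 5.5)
The plan is to prove stability first and then obtain the $L^2$ error bound by a Poincaré-type argument applied to the mean-free part of the error. No duality or Aubin--Nitsche argument is needed, since we only claim first order in $h$ for $H^1$ data.

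\emph{Stability.} The defining relation is an $a_h$-orthogonality, so testing it with $\phi_h = R_h z_h$ gives
\[ \|\gradoh R_h z_h\|_{L^2(\Omega_h(t))}^2 = a_h(z_h, R_h z_h) \leq \|\gradoh z_h\|_{L^2(\Omega_h(t))} \|\gradoh R_h z_h\|_{L^2(\Omega_h(t))}. \]
Hence $\|\gradoh R_h z_h\|_{L^2} \leq \|\gradoh z_h\|_{L^2}$. For the $L^2$ part, write
\[ R_h z_h = \Bigl(R_h z_h - \mval{z_h}{\Omega_h(t)}\Bigr) + \mval{z_h}{\Omega_h(t)}. \]
The first summand has zero mean, by the mean-value condition in the definition of $R_h$. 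We then apply a Poincaré inequality on $\Omega_h(t)$ whose constant is uniform in $h$ and $t$. To get that uniformity, lift to $\Omega(t)$ and use the norm equivalence \eqref{eqn: lift stability}. The mean of the lift differs from zero only by $O(h^2)$, by \eqref{eqn: geometric perturbation m}. On $\Omega(t)$ itself the Poincaré constant is uniform in $t$, because $\{(H^1(\Omega(t)),\Phi_t)\}$ are compatible pairs. The mean term is bounded by $|\Omega_h(t)|^{-1/2}\|z_h\|_{L^2}$, and $|\Omega_h(t)|$ is bounded below uniformly. Together these give \eqref{eqn: ritz stability}.

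\emph{Error bound.} Since $\mval{(z_h - R_h z_h)}{\Omega_h(t)} = 0$, the same uniform Poincaré inequality gives
\[ \|z_h - R_h z_h\|_{L^2(\Omega_h(t))} \leq C\|\gradoh(z_h - R_h z_h)\|_{L^2(\Omega_h(t))}. \]
By Galerkin orthogonality, this gradient is bounded by the best approximation:
\[ \|\gradoh(z_h - R_h z_h)\|_{L^2} \leq \inf_{\chi_h\in S_h(t)}\|\gradoh(z_h-\chi_h)\|_{L^2}. \]
This only gives $\|z_h - R_h z_h\|_{L^2} \leq C\|z_h\|_{H^1}$, with no factor of $h$, so we need a genuine $L^2$ approximation argument instead. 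Write $z_h - R_h z_h = (z_h - \chi_h) - R_h(z_h - \chi_h)$ for a suitable quasi-interpolant $\chi_h = I_h z_h \in S_h(t)$; this identity holds because $R_h\chi_h = \chi_h$. We choose $I_h$ to be a Scott--Zhang or Clément-type interpolant on the piecewise flat surface $\Omega_h(t)$. Applied elementwise on the uniformly quasi-uniform mesh, such an interpolant satisfies
\[ \|z_h - I_h z_h\|_{L^2} \leq Ch\|\gradoh z_h\|_{L^2}, \qquad \|\gradoh(z_h - I_h z_h)\|_{L^2} \leq C\|\gradoh z_h\|_{L^2}. \]
It remains to bound $\|R_h w\|_{L^2}$ by $Ch\|w\|_{H^1}$ for $w = z_h - I_h z_h$. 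This is not immediate, because $w$ is only small in $L^2$, not in $H^1$. To handle it, we modify $I_h$ to preserve the mean value over $\Omega_h(t)$, by adding a constant at cost $O(h\|\gradoh z_h\|_{L^2})$. Then $R_h w$ has mean equal to $\mval{w}{\Omega_h(t)} = 0$. A duality argument then gives
\[ \|R_h w\|_{L^2} \leq C\|w\|_{L^2} + Ch\|\gradoh w\|_{L^2}. \]
For this we let $\psi$ solve $-\Delta_{\Omega_h}\psi = R_h w$ in the sense of $\GSh$ (Definition~\ref{defn: discrete inverse laplace}) and use the $H^1$-stability of the discrete $L^2$ projection on quasi-uniform meshes. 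Combining the three estimates yields \eqref{eqn: ritz error}.

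\emph{Main obstacle.} The main obstacle is this last step, since $\Omega_h(t)$ has no $H^2$-regularity. I would first try to bypass it by lifting everything to $\Omega(t)$. There $H^2$-regularity of the Laplace--Beltrami operator holds because $\Omega(t)$ is $C^2$, uniformly in $t$. The Aubin--Nitsche argument of \cite[Theorem 4.9]{Dziuk2013SurfacePDEs} can then be carried out with dual solution $\mathcal{G}((z_h - R_h z_h)^\ell - \text{mean})$ and Lemma~\ref{lemma: inverse laplacian error}. Its $H^2$ norm controls the interpolation error, which yields $\|z_h - R_h z_h\|_{L^2} \leq Ch\|\gradoh(z_h - R_h z_h)\|_{L^2}$ plus geometric terms of order $h^2\|z_h\|_{H^1}$ coming from Lemma~\ref{lemma: geometric perturbations}. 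Combined with $H^1$-stability, this gives \eqref{eqn: ritz error}.
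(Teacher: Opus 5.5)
Your final route is correct and is essentially what the paper means by ``the usual error analysis for the Ritz projection''. Stability comes from testing with $R_h z_h$ together with a uniform Poincar\'e inequality. The $L^2$ bound comes from an Aubin--Nitsche argument carried out on the smooth surface $\Omega(t)$: the dual solution is $\mathcal{G}$ applied to the mean-corrected lifted error, $H^2$-regularity holds on $\Omega(t)$, and the geometric perturbation terms are $O(h^2)$.

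The Scott--Zhang detour should be dropped. As sketched, its duality step via $\GSh$ and $H^1$-stability of the $L^2$ projection only gives $\|R_h w\|_{L^2} \leq \|w\|_{L^2} + C\|\gradoh w\|_{L^2}$, with no factor $h$. The reason is that $\|\gradoh \GSh R_h w\|_{L^2}$ is a discrete $H^{-1}$ norm, of size $\|R_h w\|_{L^2}$ rather than $h\|R_h w\|_{L^2}$. Recovering the factor $h$ needs exactly the lifted regularity argument, which already proves the error bound directly.
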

\begin{proof}
    This proof is follows by same arguments used in the usual error analysis for the Ritz projection, cf.~\cite{Elliott2021EvolvingFiniteElement,Thomee2006GalerkinFEM}.
\end{proof}
\begin{lemma}
	\label{lemma: semidiscrete cts dependence}
	Let $e_{h,1}^0, e_{h,2}^0 \in S_h(0)$ such that $\int_{\Omega_h(0)} e_{h,1}^0 = \int_{\Omega_h(0)} e_{h,2}^0$, and $f_{h,1}, f_{h,2} \in \mathcal{S}_h$ such that $\int_{\Omega_h(t)} f_{h,1} = \int_{\Omega_h(t)} f_{h,2}$ for all $t \in [0,T]$.
	Then letting $e_{h,i}$, for $i = 1,2$, denote a solution of
	\[ m_h(\matdev_h e_{h,i}, \phi_h) + g_h(e_{h,i}, \phi_h) + a_h(\mathcal{U}(e_{h,i}), \phi_h) = m_h(f_{h,i}, \phi_h) \]
	for all $\phi_h \in S_h(t)$, for all $t \in [0,T]$, and such that $e_{h,i}(0) = e_{h,i}^0$, one has
	\begin{multline}
		\|e_{h,1} - e_{h,2}\|_{S_h(T)}^2 + C_\beta \int_{0}^T \|\mathcal{U}(e_{h,1}) - \mathcal{U}(e_{h,2})\|_{L^2(\Omega_h(t))}^2\\
		\leq C(T) \left(h + \|e_{h,1}^0 - e_{h,2}^0\|_{S_h(0)}^2 + \int_0^T \|f_{h,1} - f_{h,2}\|_{S_h(t)}^2 \right), \label{eqn: semidiscrete cts dependence}
	\end{multline}
	for a constant $C$ independent of $h$, but depending on $T$, $e_{h,1}^0$, $e_{h,2}^0$, and $C_\mathcal{U}$.
\end{lemma}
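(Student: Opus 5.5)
We work with the differences $E_h \coloneqq e_{h,1} - e_{h,2}$, $F_h \coloneqq f_{h,1} - f_{h,2}$ and $W_h \coloneqq \mathcal{U}(e_{h,1}) - \mathcal{U}(e_{h,2})$, and test the difference of the two equations with $\GSh E_h$. This is the standard $H^{-1}$-type argument for the Stefan problem. First we check that $\GSh E_h$ is well defined. Testing each equation with $\phi_h = 1$ and using Lemma~\ref{lemma: discrete transport theorem}, we get $\ddt{}\int_{\Omega_h(t)} e_{h,i} = \int_{\Omega_h(t)} f_{h,i}$. The assumptions on the data therefore give $\int_{\Omega_h(t)} E_h = 0$ for all $t$, so $\GSh E_h$ exists, and similarly $\GSh F_h$ exists.

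Subtracting the two equations and testing with $\phi_h = \GSh E_h$ gives
\[ m_h(\matdev_h E_h, \GSh E_h) + g_h(E_h, \GSh E_h) + a_h(W_h, \GSh E_h) = m_h(F_h, \GSh E_h). \]
The diffusion term is handled by Definition~\ref{defn: discrete inverse laplace}. We have $a_h(W_h,\GSh E_h) = m_h(W_h, E_h)$, since $W_h$ differs from a function in $S_h(t)$ only through the nonlinearity. More precisely, $W_h \notin S_h(t)$, so we write $a_h(W_h,\GSh E_h) = a_h(R_h W_h, \GSh E_h) = m_h(R_h W_h, E_h)$ using Definition~\ref{defn: ritz projection}. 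We then split
\[ m_h(R_hW_h,E_h) = m_h(W_h,E_h) + m_h(R_hW_h - W_h, E_h). \]
The first term is bounded below pointwise by \eqref{eqn: beta strong monotonicity}, giving $m_h(W_h,E_h)\ge C_\beta\|W_h\|_{L^2}^2$.

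The remainder is controlled by \eqref{eqn: ritz error}:
\[ |m_h(R_hW_h - W_h, E_h)| \le Ch\|W_h\|_{H^1(\Omega_h(t))}\|E_h\|_{L^2(\Omega_h(t))}. \]
After integration in time, this is bounded by $Ch$ times quantities controlled by Lemma~\ref{lemma: semidiscrete stability}, namely $\int_0^T\|\gradoh\mathcal{U}(e_{h,i})\|^2$, $\sup_t\|e_{h,i}\|_{L^2}$, and $\|W_h\|_{L^2}\le C_\mathcal{U}\|E_h\|_{L^2}$. This is the source of the $h$ term in \eqref{eqn: semidiscrete cts dependence}, and of the dependence of the constant on $e_{h,i}^0$ and $C_\mathcal{U}$. (The data $f_{h,i}$ enter these bounds as well; we treat them as fixed.)

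For the time derivative term we need the identity
\[ m_h(\matdev_h E_h, \GSh E_h) = \tfrac12\ddt{}\|E_h\|_{S_h(t)}^2 + \text{(geometric terms)}. \]
To derive it, we differentiate $a_h(\GSh E_h,\GSh E_h)$ using Lemma~\ref{lemma: discrete transport theorem}, and then differentiate the defining relation $a_h(\GSh E_h,\phi_h) = m_h(E_h,\phi_h)$ along transported test functions with $\matdev_h\phi_h = 0$. This yields
\[ a_h(\matdev_h \GSh E_h, \phi_h) + b_h(\GSh E_h,\phi_h) = m_h(\matdev_h E_h,\phi_h) + g_h(E_h,\phi_h). \]
Taking $\phi_h = \GSh E_h$ gives
\[ m_h(\matdev_hE_h,\GSh E_h) + g_h(E_h,\GSh E_h) = \tfrac12\ddt{}\|E_h\|_{S_h}^2 + \tfrac12 b_h(\GSh E_h,\GSh E_h). \]
This calculation also requires that the mean-zero constraint is preserved, which holds since $\ddt{}\int \GSh E_h$ can be handled by the same transport argument. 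The $b_h$ term is bounded by $C\|E_h\|_{S_h}^2$ because $\mbf{V}_h$ is smooth uniformly in $h$. Conveniently, the $g_h$ term is absorbed exactly by this identity.

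The right-hand side is bounded by $m_h(F_h,\GSh E_h) = a_h(\GSh F_h,\GSh E_h)\le \tfrac12\|F_h\|_{S_h}^2+\tfrac12\|E_h\|_{S_h}^2$. Collecting everything, we obtain
\[ \tfrac12\ddt{}\|E_h\|_{S_h}^2 + C_\beta\|W_h\|_{L^2}^2 \le C\|E_h\|_{S_h}^2 + \tfrac12\|F_h\|_{S_h}^2 + Ch\,\Xi(t), \]
where $\int_0^T\Xi\le C$ by Lemma~\ref{lemma: semidiscrete stability}. Integrating in time and applying Gr\"onwall's inequality gives \eqref{eqn: semidiscrete cts dependence} with the supremum over $t$, and in particular at $T$.

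We expect the main obstacle to be the rigorous derivation of the time-derivative identity for $\|E_h\|_{S_h(t)}^2$ on the evolving mesh. This is where the differentiability of $\GSh E_h$ in time (Lemma~\ref{lemma: inverse laplacian time derivative} in discrete form, immediate in finite dimensions) and the commutation of $\matdev_h$ with the mean-value constraint must be checked. The Ritz-remainder bound also needs care to keep only a single power of $h$ with stability-controlled factors.
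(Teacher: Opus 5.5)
Your proposal is correct and follows essentially the same route as the paper. Both test the difference equation with $\GSh E_h$ and pass the diffusion term through the Ritz projection, which gives $C_\beta\|W_h\|_{L^2}^2$ plus an $\mathcal{O}(h)$ remainder controlled by Lemma~\ref{lemma: semidiscrete stability}; both then use $m_h(\matdev_h E_h,\GSh E_h)+g_h(E_h,\GSh E_h)=\frac12\ddt{}\|E_h\|_{S_h(t)}^2+\frac12 b_h(\GSh E_h,\GSh E_h)$ and close with Young's and Gr\"onwall's inequalities. Your derivation of that identity by differentiating the defining relation of $\GSh$ against transported test functions is equivalent to the paper's, and you are right that the constant also depends on $f_{h,i}$ through the stability bound, which the paper's stated dependence omits.
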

\begin{proof}
	 If we define a function
	 \[ E_h \coloneqq e_{h,1} - e_{h,2}, \]
	 then immediately one finds that
	 \begin{align}
	 	m_h(\matdev_h E_h, \phi_h) + g_h(E_{h}, \phi_h) + a_h(\mathcal{U}(e_{h,1}) - \mathcal{U}(e_{h,2}), \phi_h) = m_h(f_{h,1} - f_{h,2}, \phi_h), \label{eqn: semidiscrete stability pf1}
	 \end{align}
	 for all $\phi_h \in \mathcal{S}_h$ and $E_h(0) = e_{h,1}^0 - e_{h,2}^0$.
	 Testing \eqref{eqn: semidiscrete stability pf1} with $\phi_h \equiv 1$ we see that, by the assumptions made on $f_{h,1}, f_{h,2}$,
	 \[ 0 = m_h(\matdev_h E_h, 1) + g_h(E_{h}, 1) = \ddt{} m_h(E_h, 1). \]
	 Our assumptions on $e_{h,1}^0$ and $e_{h,2}^0$ now imply that
	 \[ \int_{\Omega_h(t)} E_h = 0, \quad \text{for a.e. } t \in [0,T].\]
	 Hence we find that $\GSh E_h$ is well-defined.
	 Testing \eqref{eqn: semidiscrete stability pf1} with $\GSh E_h$ we find that
	 \begin{equation}
	 	m_h(\matdev_h E_h, \GSh E_h) + g_h(E_{h}, \GSh E_h) + a_h( \mathcal{U}(e_{h,1}) - \mathcal{U}(e_{h,2}), \GSh E_h) = m_h(f_{h,1} - f_{h,2}, \GSh E_h). \label{eqn: semidiscrete stability pf2}
	 \end{equation}
	 From Definition~\ref{defn: discrete inverse laplace}, Definition~\ref{defn: ritz projection}, and~\eqref{eqn: beta strong monotonicity} we find that
	 \begin{align*}
	     a_h(\mathcal{U}(e_{h,1}) - \mathcal{U}(e_{h,2}), \GSh E_h) &= a_h(R_h\left(\mathcal{U}(e_{h,1}) - \mathcal{U}(e_{h,2})\right), \GSh E_h)\\
         &= m_h(R_h\left(\mathcal{U}(e_{h,1}) - \mathcal{U}(e_{h,2})\right), E_h)\\
         &\geq C_{\beta} \|\mathcal{U}(e_{h,1}) - \mathcal{U}(e_{h,2})\|_{L^2(\Omega_h(t))}^2\\
         &+ m_h(R_h\left(\mathcal{U}(e_{h,1}) - \mathcal{U}(e_{h,2})\right) - \mathcal{U}(e_{h,1}) - \mathcal{U}(e_{h,2}), E_h). 
	 \end{align*}
	 Next we observe that from Lemma~\ref{lemma: discrete transport theorem} and the definition of $\|\cdot\|_{S_h(t)}$ we have
	 \begin{align*}
	 	m_h(\matdev_h E_h, \GSh E_h) + g_h(E_{h}, \GSh E_h) &= \ddt{} \|E_h\|_{S_h(t)}^2 - m_h(E_h, \matdev_h \GSh E_h)\\
	 	&= \ddt{} \|E_h\|_{S_h(t)}^2 - a_h(\GSh E_h, \matdev_h \GSh E_h)\\
	 	&= \frac{1}{2} \ddt{} \|E_h\|_{S_h(t)}^2 + \frac{1}{2} b_h(\GSh E_h, \GSh E_h),
	\end{align*}
	where the final equality follows from Lemma~\ref{lemma: discrete transport theorem} since
	\[  a_h(\GSh E_h, \matdev_h \GSh E_h) = \frac{1}{2} \ddt{} \|E_h\|_{S_h(t)}^2 - \frac{1}{2} b_h(\GSh E_h, \GSh E_h). \]
	Combining these facts together in \eqref{eqn: semidiscrete stability pf2} we see that
	\begin{align*}
    \frac{1}{2} \ddt{} \|E_h\|_{S_h(t)}^2 + C_{\beta} \|\mathcal{U}(e_{h,1}) - \mathcal{U}(e_{h,2})\|_{L^2(\Omega_h(t))}^2 &\leq m_h(f_{h,1} - f_{h,2}, \GSh E_h) + \frac{1}{2} b_h(\GSh E_h, \GSh E_h)\\
    &+ m_h(R_h\left(\mathcal{U}(e_{h,1}) - \mathcal{U}(e_{h,2})\right) - \mathcal{U}(e_{h,1}) - \mathcal{U}(e_{h,2}), E_h).
    \end{align*}
	From our smoothness assumptions on $\mbf{V}$ there exists a constant, $C$, independent of $h$ such that
	\[ |b_h(\GSh E_h, \GSh E_h)| \leq C\|E_h\|_{-h,t}^2. \]
    Next we use Lemma~\ref{lemma: ritz error} to see that
    \[ |m_h(R_h\left(\mathcal{U}(e_{h,1}) - \mathcal{U}(e_{h,2})\right) - \mathcal{U}(e_{h,1}) - \mathcal{U}(e_{h,2}), E_h)| \leq Ch\|\mathcal{U}(e_1) - \mathcal{U}(e_2)\|_{H^1(\Omega_h(t))}\|E_h\|_{L^2(\Omega_h(t)}, \]
    for a constant $C$ independent of $h$.
    We use the Lipschitz continuity of $\mathcal{U}(\cdot)$ to see that
    \begin{align*}\|\mathcal{U}(e_1) - \mathcal{U}(e_2)\|_{H^1(\Omega_h(t))}\|E_h\|_{L^2(\Omega_h(t)} &\leq C_{\mathcal{U}}\|E_h\|_{L^2(\Omega_h(t))}^2\\
    &+ \|\gradoh(\mathcal{U}(e_1) - \mathcal{U}(e_2))\|_{L^2(\Omega_h(t))}\|E_h\|_{L^2(\Omega_h(t))},
    \end{align*}
    where we recall that $C_\mathcal{U}$ denotes the Lipschitz constant of $\mathcal{U}$.
    From Lemma~\ref{lemma: semidiscrete stability} and H\"older's inequality one now obtains
    \[ \int_0^T \|\mathcal{U}(e_1) - \mathcal{U}(e_2)\|_{H^1(\Omega_h(t))}\|E_h\|_{L^2(\Omega_h(t)} \leq C\left( \|e_{h,1}^0\|_{L^2(\Omega_h(0))}^2 + \|e_{h,2}^0\|_{L^2(\Omega_h(0))}^2 \right), \]
    where the constant $C$ depends on $T$ and $C_\mathcal{U}$.
	Finally, we use Definition~\ref{defn: discrete inverse laplace}, after noting that $\GSh(f_{h,1} - f_{h,2})$ is well-defined, along with Young's inequality and Lemma~\ref{lemma: semidiscrete stability} to find that
	\begin{align*}\|E_h\|_{S_h(t)}^2 + 2C_{\beta} \int_0^T \|\mathcal{U}(e_{h,1}) -  \mathcal{U}(e_{h,2})\|_{L^2(\Omega_h(t))}^2 &\leq Ch + \|E_h\|_{S_h(0)}^2\\
    &+ \int_0^T \|f_{h,1} - f_{h,2}\|_{S_h(t)}^2 + C\int_0^T \|E_h\|_{S_h(t)}^2 .
    \end{align*}
	\eqref{eqn: semidiscrete cts dependence} now follows by an application of the Gr\"onwall inequality.
\end{proof}
\begin{remark}\
    \label{remark: mean value cts dependence}
	\begin{enumerate}
	\item One can drop the requirements that $\int_{\Omega_h(0)} e_{h,1}^0 = \int_{\Omega_h(0)} e_{h,2}^0$ and $\int_{\Omega_h(t)} f_{h,1} = \int_{\Omega_h(t)} f_{h,2}$ for all $t \in [0,T]$ by separately treating the mean values.
    This is a straightforward modification of the above lemma, but is notationally more confusing and so we do note treat this here.
	\item This continuous dependence result is a weaker version of the result shown in \cite{Alphonse2015Stefan} wherein the authors prove continuous dependence in the $L^1$ norm (see \cite[Theorem 1.4/1.5]{Alphonse2015Stefan}) for the non-discretised problem.
\end{enumerate}
\end{remark}

\subsubsection{A method with quadrature}
We now end this section with a result comparing our numerical method to a numerical method using a quadrature rule on the nonlinear term.
Notice that we do note consider the effects of mass-lumping since our results will not use discrete maximum principle arguments.
This is in contrast to previous numerical analysis of the Stefan problem~\cite{Elliott1987StefanError}.
For this we firstly recall the following result.
\begin{lemma}[{\cite[Lemma 2.27]{Elliott2026Logarithmic}}]
    \label{lemma: quadrature error}
    Let $\mathcal{U} \in C^{0,1}(\mbb{R})$ be monotonically increasing (i.e.~nondecreasing).
    Then for $I_h$ denoting the Lagrange interpolant on $\Omega_h$ one has that
    \begin{align}
        \label{eqn: quadrature error}
        \|I_h \mathcal{U}(\phi_h) - \mathcal{U}(\phi_h) \|_{L^2(\Omega_h(t))} \leq C h \|\gradoh I_h \mathcal{U}(\phi_h)\|_{L^2(\Omega_h(t))} \quad \forall \phi_h \in S_h(t),
    \end{align}
    for a constant $C$ independent of $h$ and $t$.
\end{lemma}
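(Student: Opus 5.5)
The estimate is local to each element, so I would prove it element by element and then sum over the triangulation. Fix $t$ and $K \in \mathcal{T}_h(t)$. Since $\phi_h|_K$ is affine, it attains its extreme values on $\overline{K}$ at vertices of $K$. Because $\mathcal{U}$ is continuous and nondecreasing, $\mathcal{U}(\phi_h)$ on $K$ also takes values between the smallest and largest nodal values $\mathcal{U}(\phi_h(\mbf{x}_i))$, $i$ ranging over the vertices of $K$.

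The interpolant $I_h \mathcal{U}(\phi_h)$ is affine on $K$ with exactly these nodal values, so it lies in the same interval $[\min_i \mathcal{U}(\phi_h(\mbf{x}_i)), \max_i \mathcal{U}(\phi_h(\mbf{x}_i))]$. This is where monotonicity is essential: it is what controls the error by the nodal oscillation of $\mathcal{U}(\phi_h)$, rather than by a second derivative that $\mathcal{U}$ does not possess. Hence, pointwise on $K$,
\[ |I_h \mathcal{U}(\phi_h) - \mathcal{U}(\phi_h)| \leq \max_{i,j} |\mathcal{U}(\phi_h(\mbf{x}_i)) - \mathcal{U}(\phi_h(\mbf{x}_j))| = \operatorname{osc}_K\, I_h\mathcal{U}(\phi_h). \]

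Since $I_h \mathcal{U}(\phi_h)$ is affine on $K$, its oscillation over the element is at most $\diam(K)\,|\gradoh I_h\mathcal{U}(\phi_h)|_K|$, and the gradient is constant on $K$. Squaring and integrating over $K$ therefore gives
\[ \|I_h\mathcal{U}(\phi_h) - \mathcal{U}(\phi_h)\|_{L^2(K)}^2 \leq h^2 |K|\, |\gradoh I_h\mathcal{U}(\phi_h)|_K|^2 = h^2 \|\gradoh I_h\mathcal{U}(\phi_h)\|_{L^2(K)}^2. \]
Summing over $K \in \mathcal{T}_h(t)$ yields \eqref{eqn: quadrature error} with $C = 1$, or with a harmless geometric constant if one prefers to use local mesh sizes. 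This constant is independent of $t$ because $h$ is defined as a supremum over $t$, and the argument needs no quasi-uniformity.

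The only step requiring care is the pointwise sandwich argument. It rests on two facts: the composition of a nondecreasing function with the affine function $\phi_h|_K$ attains its extremes at the vertices, and the nodal values of the interpolant coincide with those of $\mathcal{U}(\phi_h)$. Both are elementary, so I do not expect a genuine obstacle here.
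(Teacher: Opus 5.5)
Your proof is correct and complete. The paper gives no proof of its own and simply cites \cite[Lemma 2.27]{Elliott2026Logarithmic}; your elementwise argument is the standard one for this estimate: monotonicity traps both $\mathcal{U}(\phi_h)$ and $I_h\mathcal{U}(\phi_h)$ between the extreme nodal values on $K$, and the nodal oscillation of the affine function $I_h\mathcal{U}(\phi_h)|_K$ is at most $\diam(K)$ times the constant value of $|\gradoh I_h\mathcal{U}(\phi_h)|$ on $K$. As you note, this gives $C=1$ with no quasi-uniformity needed; in fact only monotonicity of $\mathcal{U}$ is used, so the Lipschitz hypothesis plays no role in this estimate.
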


\begin{lemma}
    \label{lemma: numerical integration comparison}
    Let $\beta$ satisfy Assumption~\ref{assumption: enthalpy}.
    Let $e_{h,1}$ denote the solution to \eqref{eqn: semidiscrete stefan}, and let $e_{h,2}$ solve
    \[ m_h(\matdev_h e_{h,2}, \phi_h) + g_h(e_{h,2}, \phi_h) + a_h(I_h \mathcal{U}(e_{h,2}), \phi_h) = m_h(f_h, \phi_h) \quad \forall \phi_h \in S_h(t), \]
    for almost all $t \in [0,T]$ and such that $e_{h,2}(0) = e_h^0$.
    Then if $e_{h,2}$ is such that
    \begin{align}
        \sup_{t \in [0,T]} \|e_{h,2}\|_{L^2(\Omega_h(t))} + \int_0^T \|\gradoh I_h \mathcal{U}(e_{h,2})\|_{L^2(\Omega_h(t))}  \leq C \|e_h^0\|_{L^2(\Omega_h(0))},
    \end{align}
    for a constant $C$ independent of $h$.
    Then 
    \begin{align*}\|E_h\|_{S_h(T)}^2 + 2C_{\beta} \int_0^T \|\mathcal{U}(e_{h,1}) -  \mathcal{U}(e_{h,2})\|_{L^2(\Omega_h(t))}^2 &\leq C(T)h \|e_h^0\|_{L^2(\Omega_h(0))}^2,
    \end{align*}
    for a constant $C(T)$ a constant independent of $h$ but depending on $T$.
\end{lemma}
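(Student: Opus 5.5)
We mirror the proof of Lemma~\ref{lemma: semidiscrete cts dependence}. Set $E_h \coloneqq e_{h,1} - e_{h,2}$. Subtracting the two schemes gives
\[ m_h(\matdev_h E_h, \phi_h) + g_h(E_h, \phi_h) + a_h(\mathcal{U}(e_{h,1}) - I_h\mathcal{U}(e_{h,2}), \phi_h) = 0 \quad \forall \phi_h \in S_h(t), \]
with $E_h(0) = 0$. Testing with $\phi_h \equiv 1$ and using Lemma~\ref{lemma: discrete transport theorem} gives $\ddt{} m_h(E_h,1) = 0$. Hence $E_h$ has zero mean for all $t$, and $\GSh E_h$ is well defined.

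We then test with $\GSh E_h$. As in the proof of Lemma~\ref{lemma: semidiscrete cts dependence}, the time derivative and $g_h$ terms combine to $\tfrac12 \ddt{}\|E_h\|_{S_h(t)}^2 + \tfrac12 b_h(\GSh E_h, \GSh E_h)$. The $b_h$ term is bounded by $C\|E_h\|_{S_h(t)}^2$. We split the nonlinear term as
\[ a_h(\mathcal{U}(e_{h,1}) - I_h\mathcal{U}(e_{h,2}), \GSh E_h) = a_h\big(R_h(\mathcal{U}(e_{h,1}) - \mathcal{U}(e_{h,2})), \GSh E_h\big) + a_h(\mathcal{U}(e_{h,2}) - I_h\mathcal{U}(e_{h,2}), \GSh E_h). \]
For the first term, we use Definition~\ref{defn: ritz projection} and Definition~\ref{defn: discrete inverse laplace} to convert it to $m_h(R_h(\cdot), E_h)$. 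We then apply \eqref{eqn: beta strong monotonicity} pointwise to get the lower bound $C_\beta\|\mathcal{U}(e_{h,1}) - \mathcal{U}(e_{h,2})\|_{L^2}^2$. The remainder $m_h((R_h - \mathrm{id})(\mathcal{U}(e_{h,1}) - \mathcal{U}(e_{h,2})), E_h)$ is bounded via Lemma~\ref{lemma: ritz error} by $Ch\|\mathcal{U}(e_{h,1}) - \mathcal{U}(e_{h,2})\|_{H^1}\|E_h\|_{L^2}$. This is handled exactly as before, using Lemma~\ref{lemma: semidiscrete stability} for $e_{h,1}$ and the assumed bound for $e_{h,2}$. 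Note that $\gradoh\mathcal{U}(e_{h,2})$ is controlled pointwise by $C_\mathcal{U}|\gradoh e_{h,2}|$, which is not uniformly bounded. So the $H^1$ norm of $\mathcal{U}(e_{h,2})$ should first be reduced to $\gradoh I_h\mathcal{U}(e_{h,2})$ plus the interpolation error. Alternatively, and more simply, we rewrite this term as $a_h(R_h\mathcal{U}(e_{h,1}) - I_h\mathcal{U}(e_{h,2}), \GSh E_h)$ and split around $\mathcal{U}(e_{h,2})$ only in the $m_h$ pairing.

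The second term is the genuinely new quadrature term and the main obstacle. Since $\GSh E_h \in S_h(t)$, Definition~\ref{defn: ritz projection} gives $a_h(\mathcal{U}(e_{h,2}) - I_h\mathcal{U}(e_{h,2}), \GSh E_h) = m_h(R_h\mathcal{U}(e_{h,2}) - I_h\mathcal{U}(e_{h,2}), E_h)$, up to the mean-value adjustment, which is harmless since $E_h$ has zero mean. We then write
\[ R_h\mathcal{U}(e_{h,2}) - I_h\mathcal{U}(e_{h,2}) = (R_h - \mathrm{id})\mathcal{U}(e_{h,2}) + (\mathcal{U}(e_{h,2}) - I_h\mathcal{U}(e_{h,2})). \]
The second piece is bounded in $L^2$ by Lemma~\ref{lemma: quadrature error} by $Ch\|\gradoh I_h\mathcal{U}(e_{h,2})\|_{L^2}$. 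For the first piece, note that $(R_h - \mathrm{id})\mathcal{U}(e_{h,2}) = R_h(\mathcal{U}(e_{h,2}) - I_h\mathcal{U}(e_{h,2})) - (\mathcal{U}(e_{h,2}) - I_h\mathcal{U}(e_{h,2}))$, because $R_h$ fixes $S_h(t)$. It therefore suffices to control $R_h$ of the interpolation error in $L^2$. By an Aubin--Nitsche duality argument, the $L^2$ norm of $R_h w$ is bounded by $C(\|w\|_{L^2} + h\|\gradoh w\|_{L^2})$. Moreover, $\|\gradoh(\mathcal{U}(e_{h,2}) - I_h\mathcal{U}(e_{h,2}))\|_{L^2} \le C\|\gradoh I_h\mathcal{U}(e_{h,2})\|_{L^2}$ elementwise, by monotonicity and Lipschitz continuity as in Lemma~\ref{lemma: quadrature error}. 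All contributions are therefore $\le Ch\|\gradoh I_h\mathcal{U}(e_{h,2})\|_{L^2}\|E_h\|_{L^2}$.

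Finally, we integrate in time. We bound $\|E_h\|_{L^2} \le \sup_t(\|e_{h,1}\|_{L^2} + \|e_{h,2}\|_{L^2}) \le C\|e_h^0\|_{L^2}$ using Lemma~\ref{lemma: semidiscrete stability} and the hypothesis. We also use $\int_0^T\|\gradoh I_h\mathcal{U}(e_{h,2})\|_{L^2} \le C\|e_h^0\|_{L^2}$. This gives all $h$-terms of size $Ch\|e_h^0\|^2_{L^2(\Omega_h(0))}$. Since $E_h(0) = 0$, Grönwall's inequality yields the claim with a constant $C(T)$.
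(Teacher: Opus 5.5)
Your skeleton (zero mean of $E_h$, testing with $\GSh E_h$, the identity $\tfrac12\ddt{}\|E_h\|_{S_h(t)}^2+\tfrac12 b_h(\GSh E_h,\GSh E_h)$, strong monotonicity, Gr\"onwall) is the paper's. The gap is that the split you develop applies $R_h$ to quantities containing $\mathcal{U}(e_{h,2})$, and the estimate you use to control them is false.

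The elementwise bound $\|\gradoh(\mathcal{U}(e_{h,2})-I_h\mathcal{U}(e_{h,2}))\|_{L^2(K)}\le C\|\gradoh I_h\mathcal{U}(e_{h,2})\|_{L^2(K)}$ does not follow from monotonicity and Lipschitz continuity. Take $\mathcal{U}$ as in \eqref{eq:classicalU} and an element on which $e_{h,2}$ has nodal values $0,1+\delta,1+\delta$. Then $I_h\mathcal{U}(e_{h,2})$ has nodal values $0,\delta,\delta$, so $\|\gradoh I_h\mathcal{U}(e_{h,2})\|_{L^2(K)}^2\sim\delta^2$. By contrast, $\mathcal{U}(e_{h,2})-I_h\mathcal{U}(e_{h,2})$ has gradient of size $\sim h^{-1}$ on the strip $\{e_{h,2}>1\}$, which has area $\sim\delta h^2$, so its squared gradient norm is $\sim\delta$. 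The ratio blows up as $\delta\to0$. Lemma~\ref{lemma: quadrature error} holds because monotonicity traps $\mathcal{U}(e_{h,2})|_K$ between the extreme nodal values of $I_h\mathcal{U}(e_{h,2})$. That is an oscillation statement carrying no derivative information. Consequently, neither your reduction of $\|\mathcal{U}(e_{h,2})\|_{H^1}$ in the first term nor your Aubin--Nitsche bound for $R_h(\mathcal{U}(e_{h,2})-I_h\mathcal{U}(e_{h,2}))$ in the second term is justified.

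The missing idea is simply never to apply $R_h$ to $\mathcal{U}(e_{h,2})$. Your ``more simply'' remark points there, but $a_h(R_h\mathcal{U}(e_{h,1})-I_h\mathcal{U}(e_{h,2}),\GSh E_h)$ equals the \emph{whole} nonlinear term, not your first term. This uses linearity of $R_h$ and $R_h I_h\mathcal{U}(e_{h,2})=I_h\mathcal{U}(e_{h,2})$. Carried through, it makes your second term disappear. Converting with Definition~\ref{defn: discrete inverse laplace} and adding and subtracting inside the $L^2$ pairing gives
\begin{align*}
m_h(R_h\mathcal{U}(e_{h,1})-I_h\mathcal{U}(e_{h,2}),E_h) &= m_h(\mathcal{U}(e_{h,1})-\mathcal{U}(e_{h,2}),E_h) + m_h(R_h\mathcal{U}(e_{h,1})-\mathcal{U}(e_{h,1}),E_h)\\
&\quad + m_h(\mathcal{U}(e_{h,2})-I_h\mathcal{U}(e_{h,2}),E_h).
\end{align*}
The first term yields the $C_\beta$ contribution. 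The second needs only Lemma~\ref{lemma: ritz error} together with the gradient control of $\mathcal{U}(e_{h,1})$ from Lemma~\ref{lemma: semidiscrete stability}. The third needs only the $L^2$ estimate of Lemma~\ref{lemma: quadrature error} and the hypothesis on $e_{h,2}$. This is precisely the paper's proof, and it never requires gradient information on $\mathcal{U}(e_{h,2})$ or on its interpolation error.
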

\begin{proof}
    As in the previous lemma, we define $E_h \coloneqq e_{h,1} - e_{h,2}$.
    It is now straightforward to verify that $E_h$ is such that
    \begin{align}
        m_h(\matdev_h E_h, \phi_h) + g_h(E_h, \phi_h) + a_h(\mathcal{U}(e_{h,1})-I_h \mathcal{U}(e_{h,2}), \phi_h) = 0 \quad \forall \phi_h \in S_h(t), \label{eqn: quadrature pf1}
    \end{align}
    for almost all $t \in [0,T]$ and such that $E_h(0) = 0$.
    The proof from here is now similar to that of Lemma~\ref{lemma: semidiscrete cts dependence}.
    We test~\eqref{eqn: quadrature pf1} with $\GSh E_h$, and use Definition~\ref{defn: discrete inverse laplace} and Definition~\ref{defn: ritz projection} to see that
    \[ m_h(\matdev_h E_h, \GSh E_h) + g_h(E_h, \GSh E_h) + m_h(R_h(\mathcal{U}(e_{h,1})-I_h \mathcal{U}(e_{h,2})), E_h) = 0. \]
    We now recall from the proof of Lemma~\ref{lemma: semidiscrete cts dependence} that
    \begin{align*}
	 	m_h(\matdev_h E_h, \GSh E_h) + g_h(E_{h}, \GSh E_h) = \frac{1}{2} \ddt{} \|E_h\|_{S_h(t)}^2 + \frac{1}{2} b_h(\GSh E_h, \GSh E_h),
	\end{align*}
    from which one can now observe that
    \begin{align}
        \frac{1}{2} \ddt{} \|E_h\|_{S_h(t)}^2 + m_h(\mathcal{U}(e_{h,1}) - \mathcal{U}(e_{h,2}), E_h) &= m_h(\mathcal{U}(e_{h,1}) - R_h \mathcal{U}(e_{h,1}),E_h)\notag \\
        &+ m_h(I_h \mathcal{U}(e_{h,2}) - \mathcal{U}(e_{h,2}), E_h) \label{eqn: quadrature pf2}\\
        &-\frac{1}{2} b_h(\GSh E_h, \GSh E_h), \notag
    \end{align}
    where we have also used the linearity of the Ritz projection, and the fact that $R_h I_h \mathcal{U}(e_{h,2}) = I_h \mathcal{U}(e_{h,2})$.
    The proof will now follow by the same ideas as in the proof of Lemma~\ref{lemma: semidiscrete cts dependence} where the only new ingredient we require is the use of Lemma~\ref{lemma: quadrature error} to see that
    \[ |m_h(I_h \mathcal{U}(e_{h,2}) - \mathcal{U}(e_{h,2}), E_h)| \leq Ch \|\gradoh I_h \mathcal{U}(e_{h,2})\|_{L^2(\Omega_h(t))} \left( \|e_{h,1}\|_{L^2(\Omega_h(t))} + \|e_{h,2}\|_{L^2(\Omega_h(t))} \right). \]
    We omit further details.
\end{proof}
\begin{remark}
    One can verify the hypotheses of this lemma with minor adaptions to the proof of Lemma~\ref{lemma: semidiscrete stability} to verify the hypotheses.
    For this one wishes to use~\cite[Lemma 2.28]{Elliott2026Logarithmic} which requires some notion of mesh-acuteness, which is problematic on evolving surfaces as discussed in Remark~\ref{remark: mass-lumping}.
\end{remark}

\subsection{Error bounds}

We now prove our main result, Theorem~\ref{thm: error theorem}, for which we shall make the following assumption on the data.

\begin{assumption}[Data approximation]
    \label{assumption: data}
    We assume that the initial data, $e_h^0 \in S_h(0)$, is such that
    \[ \|e_h^0\|_{L^2(\Omega_h(0))} \leq C \|e_0\|_{L^2(\Omega(0))} \text{ and } \int_{\Omega_h(0)} e_h^0 = \int_{\Omega(0)} e_0, \]
    where $C$ is a constant independent of $h$.
    Likewise we assume that $f_h(t) \in S_h(t)$ is such that
    \[ \|f_h(t)\|_{L^2(\Omega_h(t))} \leq C \|f(t)\|_{L^2(\Omega(t))} \text{ and } \int_{\Omega_h(t)} f_h(t) = \int_{\Omega(t)} f(t) \quad \text{for a.e. } t \in [0,T], \]
    where $C$ is a constant independent of $h$ and $t$.
\end{assumption}
Examples of data satisfying this are $e_h^0 = \Lambda_h(0) e_0$ and $f_h(t) = \Lambda_h(t) f(t)$, where $\Lambda_h(t) : L^2(\Omega(t)) \rightarrow S_h(t)$ denotes the $L^2$ projection onto $S_h(t)$, or suitably defined\footnote{For example the Ritz projection of \cite[Definition 3.6]{Elliott2021EvolvingFiniteElement}, rather than that in our Definition~\ref{defn: ritz projection}.} Ritz projections.

\begin{remark}
    \label{remark: interpolant initial data}
    For practical methods one may wish to use the Lagrange interpolant of the data, for example $e_h^0 = I_h e_0^{-\ell}$ for $e_0 \in C^0(\Omega(0)) \cap W^{1,1}(\Omega(0))$.
    In this case it is sufficient for our analysis to note that
    \[ \|I_h e_0^{-\ell}\|_{C^0(\Omega_h(0))} \leq C \|e_0\|_{C^0(\Omega(0))}, \]
    but the mean value condition may not hold. 
    Notice however that standard error bounds for the Lagrange interpolant (cf.~\cite[Corollary 7.1]{Elliott2021EvolvingFiniteElement}) imply that
    \[ \left|\int_{\Omega_h(0))} I_h e_0^{-\ell} - \int_{\Omega(0)} e_0\right| \leq C h \|e_0\|_{W^{1,1}(\Omega(0))}, \]
    and so a suitable choice of initial data would be
    \[e_h^0 = I_h e_0^{-\ell} + \frac{1}{|\Omega_h(0)|}\left(\int_{\Omega(0)} e_0 - \int_{\Omega_h(0))} I_h e_0^{-\ell}\right).\]
    Moreover, owing to Lemma~\ref{lemma: semidiscrete cts dependence} and Remark~\ref{remark: mean value cts dependence} one may in practice neglect these mean value contributions.
    In practice the initial data may be discontinuous in $\Omega(0)$ but piecewise (H\"older) continuous in the phases $\Omega_{\pm}(0)$ --- in this case we may nonetheless choose the initial data as the Lagrange interpolant (see \cite[Remark 6]{Nochetto1988Degenerate}).
\end{remark}

\subsubsection{Preliminaries}

We now introduce some preliminary results to be used in our subsequent error analysis. 
Firstly we define an $L^2$ projection-type operator from $\Omega_h(t)$ onto $\Omega(t)$, which to our knowledge has not previously been used in the analysis of surface finite element methods.
\begin{definition}
\label{defn: new L2 projection}
For $z_h \in L^2(\Omega_h(t))$ we define $\Ph z_h \in L^2(\Omega(t))$ to be the unique function such that
\begin{align}
    m(\Ph z_h, \phi) = m_h(z_h, \phi^{-\ell}) \quad \forall \phi \in L^2(\Omega(t)). \label{eqn: new L2 projection defn}
\end{align}
\end{definition}
Clearly such a function exists by the Riesz representation theorem.
We note that this operator is not truly a projection since $L^2(\Omega(t))$ is not a subset of $L^2(\Omega_h(t))$ --- however, it is ``almost a projection'' as we shall see in Lemma~\ref{lemma: new L2 projection bounds}.
We now state some of the basic properties of this operator.

\begin{lemma}
    \label{lemma: new L2 projection bounds}
    Given $z_h \in L^2(\Omega_h(t))$, there exists a constant $C$ independent of $z_h$, $t$, and $h$ such that
    \begin{gather}
        \|\Ph z_h\|_{L^2(\Omega(t))} \leq C \|z_h\|_{L^2(\Omega_h(t))} \label{eqn: new L2 projection bound}\\
        \|\Ph z_h - z_h^\ell\|_{L^2(\Omega(t))} \leq C h^2 \|z_h\|_{L^2(\Omega_h(t))}. \label{eqn: new L2 projection error}
    \end{gather}
    Moreover, $\Ph$ is almost a projection in the sense that
    \begin{align}
        \|\Ph(\Ph z_h)^{-\ell} - \Ph z_h\|_{L^2(\Omega(t))} \leq Ch^2\|z_h\|_{L^2(\Omega_h(t))}. \label{eqn: L2 almost projection}
    \end{align}
\end{lemma}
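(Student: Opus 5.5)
The plan is to write $\Ph$ explicitly through the area element relating $\Omega_h(t)$ and $\Omega(t)$. As in \cite{Elliott2021EvolvingFiniteElement}, let $\delta_h$ be the quotient of surface measures defined by $\int_{\Omega_h(t)} \chi^{-\ell} = \int_{\Omega(t)} \chi \,\delta_h^{-1}$. Equivalently, $d\sigma_h = \delta_h^{-1}\, d\sigma$ under the lift, or the reverse convention, depending on how $\delta_h$ is normalised. We will use the standard estimate $\|1 - \delta_h\|_{L^\infty(\Omega(t))} \le Ch^2$, uniformly in $t$ and for $h$ sufficiently small; this is the same geometric fact underlying \eqref{eqn: geometric perturbation m} in Lemma~\ref{lemma: geometric perturbations}. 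With this notation,
\[ m_h(z_h, \phi^{-\ell}) = \int_{\Omega(t)} z_h^\ell \phi \, \mu_h, \]
where $\mu_h$ denotes the appropriate ratio of area elements (either $\delta_h$ or $\delta_h^{-1}$). Since this identity holds for every $\phi \in L^2(\Omega(t))$, Definition~\ref{defn: new L2 projection} gives the explicit formula $\Ph z_h = \mu_h z_h^\ell$, and in particular identifies the Riesz representative.

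The estimates \eqref{eqn: new L2 projection bound} and \eqref{eqn: new L2 projection error} then follow directly. We have $\|\Ph z_h\|_{L^2(\Omega(t))} \le \|\mu_h\|_{L^\infty}\|z_h^\ell\|_{L^2(\Omega(t))}$, and \eqref{eqn: lift stability} bounds $\|z_h^\ell\|_{L^2(\Omega(t))}$ by $C\|z_h\|_{L^2(\Omega_h(t))}$. For the error, $\Ph z_h - z_h^\ell = (\mu_h - 1) z_h^\ell$, so it is bounded by $\|\mu_h - 1\|_{L^\infty}\|z_h^\ell\|_{L^2} \le Ch^2\|z_h\|_{L^2(\Omega_h(t))}$.

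If one prefers to avoid the explicit formula, there is an alternative argument that uses only Lemma~\ref{lemma: geometric perturbations}. Test the definition with $\phi = \Ph z_h - z_h^\ell$ and write
\[ \|\Ph z_h - z_h^\ell\|^2 = m_h(z_h, \phi^{-\ell}) - m(z_h^\ell, \phi). \]
The term $(z_h^\ell)^{-\ell} = z_h$, so \eqref{eqn: geometric perturbation m} bounds the right-hand side by $Ch^2 \|z_h^\ell\|\,\|\phi\|$. Dividing by $\|\phi\|$ yields \eqref{eqn: new L2 projection error}, and \eqref{eqn: new L2 projection bound} then follows by the triangle inequality together with \eqref{eqn: lift stability}. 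I would present this second argument, since it relies only on results already stated in the paper.

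For \eqref{eqn: L2 almost projection}, set $w \coloneqq (\Ph z_h)^{-\ell} \in L^2(\Omega_h(t))$, so that $w^\ell = \Ph z_h$. Applying \eqref{eqn: new L2 projection error} to $w$ gives
\[ \|\Ph w - w^\ell\| = \|\Ph(\Ph z_h)^{-\ell} - \Ph z_h\| \le Ch^2\|w\|_{L^2(\Omega_h(t))}. \]
By \eqref{eqn: lift stability} and \eqref{eqn: new L2 projection bound}, $\|w\|_{L^2(\Omega_h(t))} \le C\|\Ph z_h\|_{L^2(\Omega(t))} \le C\|z_h\|_{L^2(\Omega_h(t))}$, which completes the bound.

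The only delicate point is checking that \eqref{eqn: geometric perturbation m} is valid for general $L^2$ functions rather than only smooth ones. It holds for $L^2$ functions by density, or directly from the $L^\infty$ bound on the area quotient. The remaining steps are routine.
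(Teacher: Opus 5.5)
Your proposal is correct and follows essentially the paper's route. The error bound \eqref{eqn: new L2 projection error} comes from the same test function $\phi = \Ph z_h - z_h^\ell$ combined with \eqref{eqn: geometric perturbation m}, and your explicit formula $\Ph z_h = \mu_h z_h^\ell$ is the paper's Remark~\ref{remark: projection-type operator determinant}. The only differences are minor. You deduce \eqref{eqn: new L2 projection bound} from the error bound via the triangle inequality, whereas the paper tests directly with $\phi = \Ph z_h$ and needs only lift stability. For \eqref{eqn: L2 almost projection} you apply \eqref{eqn: new L2 projection error} to $w = (\Ph z_h)^{-\ell}$, whereas the paper tests the definition with $\Ph w - \Ph z_h$ and bounds it by $C\|\Ph z_h - z_h^\ell\|_{L^2(\Omega(t))}$; both variants are equally short and valid.
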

\begin{proof}
    Proving~\eqref{eqn: new L2 projection bound} is a straightforward consequence of the stability of the lift after testing \eqref{eqn: new L2 projection defn} with $\phi = \Ph z_h$.
    In order to show~\eqref{eqn: new L2 projection error} we observe that
    \[ \|\Ph z_h -z_h^\ell\|_{L^2(\Omega(t))}^2 = m_h(z_h, (\Ph z_h - z_h^\ell)^{-\ell}) - m(z_h^\ell, \Ph z_h - z_h^\ell), \]
    whence using Lemma~\ref{lemma: geometric perturbations} yields the result.
    Finally, we verify the ``almost projection'' property~\eqref{eqn: L2 almost projection}.
    It is straightforward to see from~\eqref{eqn: new L2 projection defn} and the stability of the lift that
    \begin{align*}
        \|\Ph(\Ph z_h)^{-\ell} - \Ph z_h\|_{L^2(\Omega(t))}^2 &= m_h((\Ph z_h)^{-\ell} - z_h,(\Ph(\Ph z_h)^{-\ell} - \Ph z_h)^{-\ell})\\
        &\leq C \|\Ph z_h - z_h^\ell\|_{L^2(\Omega(t))}\|\Ph(\Ph z_h)^{-\ell} - \Ph z_h\|_{L^2(\Omega(t))},
    \end{align*}
    from~\eqref{eqn: L2 almost projection} now follows by applying~\eqref{eqn: new L2 projection error}.
\end{proof}

\begin{remark}
    \label{remark: projection-type operator determinant}
    By a straightforward change of variables it is easy to verify that
    \[ \Ph z_h = \det(D \mbf{p}^{-1}) z_h^\ell \quad \text{a.e. on } \Omega(t), \]
    where $\mbf{p}^{-1} : \Omega(t) \rightarrow \Omega_h(t)$ is the inverse of the closest point projection.
\end{remark}

The following technical result we prove concerns the differentiability in time of $\Ph z_h$ for sufficiently smooth (in time) finite element functions $z_h$.
\begin{lemma}
    \label{lemma: new L2 projection time derivative}
    Let $z_h \in \mathcal{S}_h$.
    Then $\Ph z_h$ as defined in Definition~\ref{defn: new L2 projection} is an element of $H^1_{H^{-1}}$.
    Moreover, there exists a constant, $C$, independent of $z_h$ and $h$, such that
    \begin{gather}
        \int_0^T \|\matdev \Ph z_h\|_{H^{-1}(\Omega(t))}^2 \leq C \int_0^T \left( \|\matdev_h z_h\|_{H^{-1}(\Omega_h(t))}^2 + \|z_h\|_{L^2(\Omega_h(t))}^2 \right), \label{eqn: new L2 projection time derivative bound}\\
        \int_0^T \left| m_*(\matdev \Ph z_h, \phi) - m_h(\matdev_h z_h, \phi^{-\ell}) \right| \leq Ch^2 \int_0^T \|z_h\|_{L^2(\Omega_h(t))}\|\phi\|_{H^1(\Omega(t))}. \label{eqn: new L2 projection time derivative error}
    \end{gather}
\end{lemma}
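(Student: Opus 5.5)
The plan is to compute the material derivative of $\Ph z_h$ weakly via the transport theorems, using the defining relation $m(\Ph z_h,\phi)=m_h(z_h,\phi^{-\ell})$ as an identity in $t$. Fix a smooth test function $\phi$ on the space-time surface, so that $\phi\in H^1_{H^1}$ and $\matdev_\ell\phi\in L^2_{H^1}$. Since $\phi^{-\ell}$ is then a smooth enough function on $\Omega_h(t)$ moving with $\mbf{V}_h$, the discrete transport theorem (Lemma~\ref{lemma: discrete transport theorem}, whose $m_h$-part extends to $H^1$-in-time functions) gives
\[ \ddt{} m_h(z_h,\phi^{-\ell}) = m_h(\matdev_h z_h,\phi^{-\ell}) + m_h(z_h,\matdev_h \phi^{-\ell}) + g_h(z_h,\phi^{-\ell}). \]
Here $\matdev_h\phi^{-\ell}=(\matdev_\ell\phi)^{-\ell}$ by definition of the lifted derivative. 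On the continuous side, I define the candidate derivative $\mathcal{L}(t)\in H^{-1}(\Omega(t))$ by
\[ m_*(\mathcal{L},\phi) \coloneqq m_h(\matdev_h z_h,\phi^{-\ell}) + m_h(z_h,(\matdev_\ell\phi)^{-\ell}) + g_h(z_h,\phi^{-\ell}) - m(\Ph z_h,\matdev\phi) - g(\Ph z_h,\phi). \]
Integrating the displayed identity in time against compactly supported $\phi$ shows that $\mathcal{L}$ is the weak material derivative of $\Ph z_h$, in the sense of the definition in \cite{Alphonse2023EvolvingBanach}. Measurability in time is inherited from $z_h\in\mathcal{S}_h$.

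To see that $\mathcal{L}\in H^{-1}$ with the bound \eqref{eqn: new L2 projection time derivative bound}, rewrite the last two terms using the definition of $\Ph$:
\[ m(\Ph z_h,\matdev\phi) = m_h(z_h,(\matdev\phi)^{-\ell}). \]
This leaves
\[ m_*(\mathcal{L},\phi) = m_h(\matdev_h z_h,\phi^{-\ell}) + m_h\bigl(z_h,(\matdev_\ell\phi-\matdev\phi)^{-\ell}\bigr) + g_h(z_h,\phi^{-\ell}) - g(\Ph z_h,\phi). \]
Each term is then controlled as follows.
\begin{itemize}
\item The first term is at most $C\|\matdev_h z_h\|_{H^{-1}(\Omega_h)}\|\phi\|_{H^1(\Omega)}$, by the lift stability \eqref{eqn: lift stability}.
\item The second term is at most $Ch^2\|z_h\|_{L^2}\|\phi\|_{H^1}$, by Lemma~\ref{lemma: time derivative difference}.
\item For the last two, write $g(\Ph z_h,\phi)=g_\ell(\Ph z_h,\phi)+O(h)$ via \eqref{eqn: geometric perturbation g2}. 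Then $g_\ell(\Ph z_h,\phi)$ differs from $g_h((\Ph z_h)^{-\ell},\phi^{-\ell})$ by $O(h^2)$ via \eqref{eqn: geometric perturbation g1}, and $(\Ph z_h)^{-\ell}$ differs from $z_h$ by $O(h^2)$ via \eqref{eqn: new L2 projection error}. Each of these is at most a constant times $\|z_h\|_{L^2}\|\phi\|_{L^2}$.
\end{itemize}
Squaring and integrating gives \eqref{eqn: new L2 projection time derivative bound}, and density extends everything to $\phi\in H^1(\Omega(t))$.

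For the error estimate \eqref{eqn: new L2 projection time derivative error}, the quantity $m_*(\matdev\Ph z_h,\phi)-m_h(\matdev_h z_h,\phi^{-\ell})$ equals the sum of the remaining terms above. The term with $\matdev_\ell\phi-\matdev\phi$ is already $O(h^2)$. The difficulty is the pair $g_h(z_h,\phi^{-\ell})-g(\Ph z_h,\phi)$. Routing it through \eqref{eqn: geometric perturbation g2}, as above, loses a power of $h$, giving only $O(h)$. I expect this to be the main obstacle.

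To get $h^2$ I would avoid comparing $g$ with $g_\ell$. Instead I would apply the lifted transport theorem (Lemma~\ref{lemma: lifted transport theorem}) to $m(\Ph z_h,\phi)$, so that $\matdev_\ell$ and $g_\ell$ appear on the continuous side:
\[ m_*(\matdev_\ell\Ph z_h,\phi) + m(\Ph z_h,\matdev_\ell\phi) + g_\ell(\Ph z_h,\phi) = m_h(\matdev_h z_h,\phi^{-\ell}) + m_h(z_h,(\matdev_\ell\phi)^{-\ell}) + g_h(z_h,\phi^{-\ell}). \]
By the definition of $\Ph$ the terms involving $\matdev_\ell\phi$ cancel exactly. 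Then $g_\ell(\Ph z_h,\phi)-g_h(z_h,\phi^{-\ell})=O(h^2)$ follows from \eqref{eqn: geometric perturbation g1} and \eqref{eqn: new L2 projection error}.

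What remains is to relate $m_*(\matdev_\ell\Ph z_h,\phi)$ to $m_*(\matdev\Ph z_h,\phi)$. Comparing the two transport theorems gives
\[ m_*(\matdev\Ph z_h-\matdev_\ell\Ph z_h,\phi) = m(\Ph z_h,\matdev_\ell\phi-\matdev\phi) + g_\ell(\Ph z_h,\phi) - g(\Ph z_h,\phi). \]
The first term on the right is $O(h^2)$ by Lemma~\ref{lemma: time derivative difference}. The $g_\ell-g$ difference is the delicate point. Here I would try to exploit that $\grado\cdot(\mbf{V}-\mbf{V}_h^\ell)$ pairs with $\phi\in H^1$: integrate by parts tangentially, using that $\mbf{V}-\mbf{V}_h^\ell$ is $O(h^2)$ in $L^\infty$. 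Alternatively I would use the explicit form $\Ph z_h=\det(D\mbf{p}^{-1})z_h^\ell$ from Remark~\ref{remark: projection-type operator determinant} to compute $\matdev\Ph z_h$ directly and check the cancellation.
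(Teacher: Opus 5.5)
Your construction of the weak material derivative and your proof of \eqref{eqn: new L2 projection time derivative bound} follow the paper: once the definition of $\Ph$ is used, your formula for $m_*(\matdev \Ph z_h,\phi)$ is exactly \eqref{eqn: L2 time derivative pf1}. The gap is in \eqref{eqn: new L2 projection time derivative error}. You correctly reduce it to the term
\[ g_h(z_h,\phi^{-\ell})-g(\Ph z_h,\phi)=\int_{\Omega(t)}\Ph z_h\,\phi\,\grado\cdot(\mbf{V}_h^\ell-\mbf{V})+\mathcal{O}(h^2)\|z_h\|_{L^2(\Omega_h(t))}\|\phi\|_{L^2(\Omega(t))}. \]
However, you never bound the integral by $Ch^2\|z_h\|_{L^2(\Omega_h(t))}\|\phi\|_{H^1(\Omega(t))}$, and neither remedy you suggest can do so.

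\begin{itemize}
\item The detour through Lemma~\ref{lemma: lifted transport theorem} is circular. It returns the same identity, with the same integral now sitting in $m_*(\matdev\Ph z_h-\matdev_\ell\Ph z_h,\phi)$.
\item The formula $\Ph z_h=\det(D\mbf{p}^{-1})z_h^\ell$ merely recomputes that integral.
\item Integrating by parts does exploit $|\mbf{V}-\mbf{V}_h^\ell|\le Ch^2$, but it moves $\grado$ onto $\Ph z_h$. That function jumps across lifted element edges, and its elementwise gradient is controlled only by an inverse estimate, $Ch^{-1}\|z_h\|_{L^2(\Omega_h(t))}$. The outcome is again $\mathcal{O}(h)$.
\end{itemize}

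This defect is not specific to your route. The paper handles the same term by citing Lemma~\ref{lemma: geometric perturbations}. The only comparison of $g_\ell$ with $g$ there is \eqref{eqn: geometric perturbation g2}, which is $\mathcal{O}(h)$. Note also that the paper's own bound for $I_2$, which contains the same discrepancy, is stated only as $\mathcal{O}(h)$.

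The $h^2$ rate should not be expected with only $\|z_h\|_{L^2}\|\phi\|_{H^1}$ on the right. Taking $\phi\equiv 1$ in \eqref{eqn: L2 time derivative pf1} gives exactly
\[ m_*(\matdev\Ph z_h,1)-m_h(\matdev_h z_h,1)=\int_{\Omega_h(t)}z_h\bigl(\gradoh\cdot\mbf{V}_h-(\grado\cdot\mbf{V})^{-\ell}\bigr). \]
The bracket is an element-scale oscillation of size $h$. On a generic unstructured quasi-uniform mesh, its integrals against the nodal basis functions are of order $h^3$ with mesh-dependent signs; a one-dimensional mesh with alternating element lengths already shows this. Choosing the nodal values of $z_h$ to match these signs makes the left-hand side of order $h\|z_h\|_{L^2(\Omega_h(t))}$.

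What your argument, and the paper's, actually establishes is \eqref{eqn: new L2 projection time derivative error} with $h$ in place of $h^2$. This weaker bound suffices downstream. It only lowers the $I_1$ estimate in Lemma~\ref{lemma: consistency errors} to the order of $I_2$, $I_4$ and $I_6$, so Theorem~\ref{thm: error theorem} is unaffected.
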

\begin{proof}
    The outline of this technical result is as follows: $z_h \in \mathcal{S}_h$ implies that the right-hand side of~\eqref{eqn: new L2 projection defn} is differentiable in time; we can use this to find a candidate for $\matdev \Ph z_h$ in the sense of \cite[Definition 2.28]{Alphonse2015Abstract}; finally, with this notion of derivative one can use~\eqref{eqn: new L2 projection error} and Lemma~\ref{lemma: geometric perturbations} to obtain~\eqref{eqn: new L2 projection time derivative error}.
    
    Formally, by differentiating~\eqref{eqn: new L2 projection defn} in time and using Lemma~\ref{lemma: transport theorem} and Lemma~\ref{lemma: discrete transport theorem} one obtains, for~$\phi \in H^1_{H^1}$,
    \[ m_*(\matdev \Ph z_h, \phi) + m(\Ph z_h, \matdev \phi) + g(\Ph z_h, \phi) = m_h(\matdev_h z_h, \phi^{-\ell}) + m_h(z_h, \matdev_h \phi^{-\ell}) + g_h(z_h, \phi^{-\ell}). \]
    By using~\eqref{eqn: new L2 projection defn} and the definition of $\matdev_\ell$ one finds that
    \begin{equation}
    m_*(\matdev \Ph z_h, \phi) = m_h(\matdev_h z_h, \phi^{-\ell}) + m(\Ph z_h, \matdev_\ell \phi - \matdev \phi) + g_h(z_h, \phi^{-\ell}) - g(\Ph z_h, \phi),\label{eqn: L2 time derivative pf1}
    \end{equation}
    where one uses Lemma~\ref{lemma: time derivative difference} to see that this right-hand side is a linear functional acting on~$H^1(\Omega(t))$, and moreover all of the terms are known to exist.
    Hence we define a linear functional, $\widetilde{z}$, acting on $L^2_{H^1}$ by
    \[ \angled{\widetilde z, \phi}_{L^2_{H^{-1}} \times L^2_{H^1}} \coloneqq \int_0^T \left(m_h(\matdev_h z_h, \phi^{-\ell}) + m(\Ph z_h, \matdev_\ell \phi - \matdev \phi) + g_h(z_h, \phi^{-\ell}) - g(\Ph z_h, \phi) \right), \]
    which one then verifies (cf.~\cite[Definition 2.28]{Alphonse2015Abstract}) is the weak time derivative of $\Ph z_h$.
    We remark that we have abused notation here, since the functional~$\phi \mapsto \int_0^T m(\Ph z_h, (\matdev_\ell - \matdev)\phi)$ is not defined on $L^2_{H^1}$.
    However, owing to Lemma~\ref{lemma: time derivative difference}, one finds that for all $\phi \in H^1_{H^1}$
    \[ \left|\int_0^T m(\Ph z_h, \matdev_\ell \phi - \matdev \phi)\right| \leq Ch^2 \left(\int_0^T \|z_h\|_{L^2(\Omega_h(t))}^2\right)^\frac{1}{2} \left(\int_0^T \|\phi\|_{H^1(\Omega(t))}^2\right)^\frac{1}{2}, \]
    whence one may observe that by the dense embedding $H^1_{H^1} \overset{d}{\hookrightarrow} L^2_{H^1}$ we may uniquely extend this functional to act on elements of $L^2_{H^1}$.
    One can readily obtain~\eqref{eqn: new L2 projection time derivative bound} from the definition of $\widetilde{z}$.
    The error bound~\eqref{eqn: new L2 projection time derivative error} is also straightforward to show from~\eqref{eqn: L2 time derivative pf1}.
    By noting that
    \begin{align*}
        |g_h(z_h, \phi^{-\ell}) - g(\Ph z_h, \phi)| \leq |g_h(z_h, \phi^{-\ell}) - g(z_h^\ell, \phi)| + |g(z_h^\ell - \Ph z_h, \phi)|,
    \end{align*}
    one may use \eqref{eqn: new L2 projection error}, Lemma~\ref{lemma: time derivative difference} and Lemma~\ref{lemma: geometric perturbations} to obtain~\eqref{eqn: new L2 projection time derivative error} after integrating~\eqref{eqn: L2 time derivative pf1} in time.
\end{proof}

\begin{remark}
    \label{remark: new l2 projection}
    The motivation for this projection is twofold.
    Firstly, this provides a notion of a lift such that functions defined on $\Omega_h(t)$ with mean value $0$ are mapped to functions on $\Omega(t)$ with mean value $0$ --- hence this will be compatible with the use of inverse Laplacians.
    Secondly, due to Lemma~\ref{lemma: new L2 projection time derivative}, this projection allows one to mitigate any issues to do with the different material derivatives, $\matdev$ and $\matdev_h$.
    In particular this means that our analysis does not require $L^2_{H^2}$ regularity of $\matdev e$, as is often required in the error analysis of ESFEM~\cite{Elliott2021EvolvingFiniteElement}, which one typically does not have for the Stefan problem.
    As such we believe the ideas used here will be useful in the analysis of ESFEM for singular/degenerate PDEs with limited regularity, such as the porous medium equation~\cite{Alphonse2016Porous}; the Cahn--Hilliard equation~\cite{Elliott2026Degenerate,Elliott2026Logarithmic}; and the parabolic p-Laplace equation~\cite[Example 7.3]{Alphonse2023EvolvingBanach}\cite{Miura2026pLaplacian,Miura2026NeumannPLaplacian}.
\end{remark}

\subsubsection{Error analysis}
To now derive suitable error equations let us introduce shorthand notation
\[ \Eh \coloneqq e - \Ph e_h. \]
We note that we may test \eqref{eqn: stefan problem2} with $\phi = \mathcal{G}\Eh$, which we observe is well-defined since
\[ \int_{\Omega(t)} e(t) = \int_{\Omega(0)} e_0 = \int_{\Omega_h(0)} e_h^0 = \int_{\Omega_h(t)} e_h(t) = \int_{\Omega(t)} \Ph e_h(t). \]
Likewise we may test we test \eqref{eqn: semidiscrete stefan} with $\phi_h = \mathcal{G}_h \Eh$ which is well-defined for the same reason.
Doing this one obtains
\begin{gather}
    m_*(\matdev e, \mathcal{G}\Eh) + g(e, \mathcal{G}\Eh) + m(\mathcal{U}(e), \Eh) = m(f, \mathcal{G}\Eh), \label{eqn: error pf1}\\
    m_h(\matdev_h e_h, \mathcal{G}_h\Eh) + g_h(e_h, \Gh \Eh) + m_h\left(R_h\mathcal{U}(e_h), \Eh^{-\ell} - \mval{\Eh^{-\ell}}{\Omega_h(t)}\right) = m_h(f_h, \mathcal{G}_h\Eh), \label{eqn: error pf2}
\end{gather}
where we have used the Definition~\ref{defn: inverse laplace Omega}, Definition~\ref{defn: inverse laplace Omegah} and Definition~\ref{defn: ritz projection}.
We now rewrite \eqref{eqn: error pf2} in terms of $\Ph e_h$ as
\begin{align}
        m_*(\matdev \Ph e_h, \Gh^\ell \Eh) + g(\Ph e_h, \Gh^\ell\Eh) + m(\mathcal{U}(\Ph e_h),\Eh) = m_h(f_h, \mathcal{G}_h \Eh) + \sum_{n=1}^5 I_n
    \label{eqn: error pf3}
\end{align}
where we have defined consistency errors
\begin{gather*}
    I_1 \coloneqq m_*(\matdev \Ph e_h, \Gh^\ell \Eh ) - m_h(\matdev_h e_h, \Gh \Eh),\\
    I_2 \coloneqq g(\Ph e_h, \Gh^\ell \Eh) - g_h(e_h, \Gh \Eh),\\
    I_3 \coloneqq m(\mathcal{U}(\Ph e_h), \Eh) - m_h(\mathcal{U}(e_h), \Eh^{-\ell}),\\
    I_4 \coloneqq m_h(\mathcal{U}(e_h) - R_h \mathcal{U}(e_h), \Eh^{-\ell}),\\
    I_5 \coloneqq \left( \int_{\Omega_h(t)} \mathcal{U}(e_h) \right)\left( \mval{\Eh^{-\ell}}{\Omega_h(t)} \right),
\end{gather*}
so that when we subtract \eqref{eqn: error pf3} from \eqref{eqn: error pf1} we obtain
\begin{align}
    m_*(\matdev \Eh, \mathcal{G}\Eh) + g(\Eh, \mathcal{G}\Eh) + m(\mathcal{U}(e) - \mathcal{U}(\Ph e_h), \Eh) = \sum_{n = 1}^8 I_n, \label{eqn: error pf4}
\end{align}
where we have introduced further consistency errors defined by
\begin{gather*}
    I_6 \coloneqq m_*(\matdev \Ph e_h, \mathcal{G} \Eh - \mathcal{G}_h^\ell \Eh),\\
    I_7 \coloneqq g(\Ph e_h, \mathcal{G} \Eh - \mathcal{G}_h^\ell \Eh),\\
    I_8 \coloneqq m(f , \mathcal{G}\Eh) - m_h(f_h, \mathcal{G}_h \Eh).
\end{gather*}
The main component of our error analysis is the following lemma concerning these consistency errors.

\begin{lemma}[Consistency errors]
\label{lemma: consistency errors}
    Let $\beta$ satisfy Assumption~\ref{assumption: enthalpy}, and let $e_h^0$ and $f_h$ satisfy Assumption~\ref{assumption: data}.
    For $I_1, \ldots, I_8$ as defined above one has
    \begin{align*}
        \int_0^T |I_1| &\leq C_1 h^2 \int_0^T \|e_h\|_{L^2(\Omega_h(t))}\|\Gh^\ell \Eh\|_{H^1(\Omega(t))},\\
        \int_0^T |I_2| &\leq  C_2 h \int_0^T \|e_h\|_{L^2(\Omega_h(t))}\|\Gh^\ell \Eh\|_{L^2(\Omega(t))}, \\
        \int_0^T |I_3| &\leq C_3 h^2 \int_0^T \|e_h\|_{L^2(\Omega_h(t))} \|\mathcal{E}_h^{-\ell}\|_{L^2(\Omega(t))},\\
        \int_0^T |I_4| &\leq C_4 h \int_0^T \|\mathcal{U}(e_h)\|_{H^1(\Omega_h(t))} \|\Eh\|_{L^2(\Omega(t))},\\
        \int_0^T |I_5| &\leq C_5 h^2 \int_0^T \|e_h\|_{L^1(\Omega_h(t))} \|\Eh\|_{L^2(\Omega(t))},\\
        \int_0^T |I_6| &\leq C_6 h \left(\|e_h^0\|_{L^2(\Omega_h(0))}^2 + \int_0^T \|f_h\|_{L^2(\Omega_h(t))}^2 \right)^{\frac{1}{2}} \left( \int_0^T\|\Eh\|_{L^2(\Omega(t))}^2\right)^{\frac{1}{2}},  \\
        \int_0^T |I_7| &\leq C_7 h^2 \int_0^T \|e_h\|_{L^2(\Omega_h(t))} \|\Eh\|_{L^2(\Omega(t))},\\
        \int_0^T |I_8| &\leq C_8 \int_0^T \|f - f_h^\ell\|_{L^2(\Omega_h(t))} \|\Eh\|_{-1,t} + h^2\|f_h\|_{L^2(\Omega_h(t))} \|\Eh\|_{L^2(\Omega(t))},
    \end{align*}
    where the constants are independent of $h, e, e_h$.
    Note also that $C_3$ and $C_5$ depend linearly on $C_\mathcal{U}$, the Lipschitz constant of $\mathcal{U}(\cdot)$.
\end{lemma}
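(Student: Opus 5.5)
The plan is to bound each consistency error separately, pointwise in time where possible, using the tools already assembled: the properties of $\Ph$ in Lemma~\ref{lemma: new L2 projection bounds} and Lemma~\ref{lemma: new L2 projection time derivative}, the geometric perturbation estimates of Lemma~\ref{lemma: geometric perturbations}, the inverse Laplacian error of Lemma~\ref{lemma: inverse laplacian error}, the Ritz error of Lemma~\ref{lemma: ritz error}, and the stability bounds of Lemma~\ref{lemma: semidiscrete stability}. The natural grouping is into three kinds of term: pure geometric perturbation terms ($I_1,I_2,I_3,I_5,I_8$), a Ritz projection term ($I_4$), and inverse-Laplacian discrepancy terms ($I_6,I_7$).

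\textbf{Geometric terms.} For $I_1$, observe that $(\Gh^\ell\Eh)^{-\ell} = \Gh\Eh$. Applying \eqref{eqn: new L2 projection time derivative error} with $\phi = \Gh^\ell \Eh$ then gives the bound directly. For $I_2$, split
\[ g(\Ph e_h,\psi)-g_h(e_h,\psi^{-\ell}) = g(\Ph e_h - e_h^\ell,\psi) + \bigl(g(e_h^\ell,\psi)-g_\ell(e_h^\ell,\psi)\bigr) + \bigl(g_\ell(e_h^\ell,\psi) - g_h(e_h,\psi^{-\ell})\bigr). \]
The three pieces are controlled by \eqref{eqn: new L2 projection error}, \eqref{eqn: geometric perturbation g2} (this is the $O(h)$ piece), and \eqref{eqn: geometric perturbation g1} respectively. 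For $I_3$, use Remark~\ref{remark: projection-type operator determinant}: $\Ph e_h = \det(D\mbf p^{-1})e_h^\ell$ with $|\det(D\mbf p^{-1})-1|\le Ch^2$. Lipschitz continuity of $\mathcal U$ then gives $\|\mathcal U(\Ph e_h)-\mathcal U(e_h)^\ell\|\le C_{\mathcal U}Ch^2\|e_h\|$. The remaining term $m(\mathcal U(e_h)^\ell,\Eh)-m_h(\mathcal U(e_h),\Eh^{-\ell})$ is handled by \eqref{eqn: geometric perturbation m}, with $\|\mathcal U(e_h)\|\le C_{\mathcal U}\|e_h\|$ (taking $\mathcal U(0)=0$, or absorbing the constant). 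For $I_5$, note that $\int_\Omega \Eh = 0$. Hence $\bigl|\mval{\Eh^{-\ell}}{\Omega_h}\bigr| = |\Omega_h|^{-1}|m_h(\Eh^{-\ell},1)-m(\Eh,1)| \le Ch^2\|\Eh\|$, and $|\int_{\Omega_h}\mathcal U(e_h)|\le C_{\mathcal U}\|e_h\|_{L^1}$. For $I_8$, write
\[ m(f,\G\Eh)-m_h(f_h,\Gh\Eh) = m(f - \Ph f_h,\G\Eh) + m(\Ph f_h,\G\Eh - \Gh^\ell\Eh). \]
The first piece is handled by the mean-zero property of $f-\Ph f_h$ (Assumption~\ref{assumption: data}) and the duality $|m(z,\G\Eh)|\le \|z\|_{L^2}\|\G\Eh\|_{L^2}\lesssim \|z\|\,\|\Eh\|_{-1,t}$ via Poincaré, together with $\|\Ph f_h - f_h^\ell\|\le Ch^2\|f_h\|$. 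The second piece uses Lemma~\ref{lemma: inverse laplacian error}.

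\textbf{Ritz and inverse-Laplacian terms.} $I_4$ follows immediately from \eqref{eqn: ritz error} and lift stability. $I_7$ follows from Lemma~\ref{lemma: inverse laplacian error} in $L^2$, which gives $h^2$, together with boundedness of $\grado\cdot\mbf V$ and \eqref{eqn: new L2 projection bound}.

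\textbf{Main obstacle: $I_6$.} Here $\matdev\Ph e_h$ is only in $L^2_{H^{-1}}$, so I would bound
\[ |I_6|\le \|\matdev\Ph e_h\|_{H^{-1}}\,\|\G\Eh-\Gh^\ell\Eh\|_{H^1} \le Ch\|\matdev\Ph e_h\|_{H^{-1}}\|\Eh\|_{L^2}, \]
using the gradient part of Lemma~\ref{lemma: inverse laplacian error}. Integrating in time and applying Cauchy--Schwarz, I would then combine \eqref{eqn: new L2 projection time derivative bound} with both estimates of Lemma~\ref{lemma: semidiscrete stability}. A subtlety is that the $H^{-1}(\Omega_h)$ norm used in \eqref{eqn: semidiscrete energy estimate2} is really a dual norm over $S_h$ rather than over $H^1(\Omega_h)$. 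I would address this by estimating $m_h(\matdev_h e_h,\phi^{-\ell})$ directly through the equation with $\phi_h=R_h\phi^{-\ell}$, or through an $H^1$-stable $L^2$ projection on the quasi-uniform mesh, together with the estimate $\|\matdev_h e_h\|_{L^2}$-free bound. This is the step I expect to need the most care.
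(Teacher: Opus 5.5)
Your proposal is correct and follows the paper's proof essentially term by term: $I_1$ via \eqref{eqn: new L2 projection time derivative error}, $I_2,I_3,I_5,I_7$ via the geometric perturbation estimates and the bounds on $\Ph$, $I_4$ via the Ritz error, and $I_6$ via the gradient part of Lemma~\ref{lemma: inverse laplacian error} combined with \eqref{eqn: new L2 projection time derivative bound} and the stability estimates. Your splitting of $I_8$ through $\Ph f_h$, which makes the purely geometric piece vanish exactly by Definition~\ref{defn: new L2 projection}, is only a cosmetic variant of the paper's splitting through $f_h^\ell$.

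The dual-norm subtlety you raise for $I_6$ is genuine, and the paper passes over it: the proof of \eqref{eqn: semidiscrete energy estimate2} only tests with $\phi_h\in S_h(t)$. Your $L^2$-projection remedy is the clean one, because $\matdev_h e_h\in S_h(t)$ gives $m_h(\matdev_h e_h,\psi)=m_h(\matdev_h e_h,\Lambda_h\psi)$ for $\psi\in H^1(\Omega_h(t))$, and $\Lambda_h$ is $H^1$-stable on uniformly quasi-uniform meshes. The $R_h$ variant, by contrast, would additionally need an inverse estimate to control $m_h(\matdev_h e_h,\psi-R_h\psi)$.
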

\begin{proof}
    It is clear that the bound for $I_1$ is an immediate consequence of Lemma~\ref{lemma: new L2 projection time derivative}.
    The bound for $I_2$ is a consequence of combining \eqref{eqn: geometric perturbation g1}, \eqref{eqn: geometric perturbation g2} and Lemma~\ref{lemma: new L2 projection bounds}.
    The bound for $I_3$ follows by writing
    \[ m(\mathcal{U}(\Ph e_h), \Eh) - m_h(\mathcal{U}(e_h), \Eh^{-\ell}) = m(\mathcal{U}(\Ph e_h) - \mathcal{U}(e_h^\ell), \Eh) + m(\mathcal{U}(e_h^\ell), \Eh) - m_h(\mathcal{U}(e_h), \Eh^{-\ell}), \]
    and combining this with \eqref{eqn: geometric perturbation m}, Lemma~\ref{lemma: new L2 projection bounds}, and the Lipschitz continuity of $\mathcal{U}$.
    $I_4$ is bounded as an immediate consequence of Lemma~\ref{lemma: ritz error}.
    One bounds $I_5$ by noting that, since $\int_{\Omega(t)} \Eh = 0$,
    \[ \mval{\Eh^{-\ell}}{\Omega_h(t)} = \frac{1}{|\Omega_h(t)|} \left( \int_{\Omega_h(t)} \Eh^{-\ell} - \int_{\Omega(t)} \Eh \right), \]
    and then by using~\eqref{eqn: geometric perturbation m} and the Lipschitz continuity of $\mathcal{U}(\cdot)$.
    The bound on $I_6$ follows by using Lemma~\ref{lemma: inverse laplacian error} along with~\eqref{eqn: semidiscrete energy estimate2},~\eqref{eqn: new L2 projection time derivative bound} and H\"older's inequality.
    One bounds $I_7$ by combining Lemma~\ref{lemma: inverse laplacian error} and \eqref{eqn: new L2 projection bound}.
    Finally to bound $I_8$ one writes
    \begin{align*}
        m(f, \G \Eh) - m_h(f_h, \Gh \Eh) &= m(f - f_h^\ell, \G \Eh) + m(f_h^\ell, \G \Eh - \Gh^\ell \Eh)\\
        &+ m(f_h^\ell, \Gh^\ell \Eh) - m_h(f_h, \Gh \Eh),
    \end{align*}
    and bounding these terms by using Lemma~\ref{eqn: inverse laplacian error},~\eqref{eqn: geometric perturbation m}, and the Poincar\'e inequality where necessary.
\end{proof}

\begin{remark}
    From Lemma~\ref{lemma: consistency errors} it is clear that the bottleneck in our error is due to our bounds for $I_4$ and $I_6$.
    Ultimately these will lead the $\mathcal{O}(\sqrt{h})$ error in the following result.
\end{remark}

We now state, and prove, our main result.
\begin{theorem}
\label{thm: error theorem}
	Let $\beta$ satisfy Assumption~\ref{assumption: enthalpy}, and let $e_h^0$ and $f_h$ satisfy Assumption~\ref{assumption: data}.
    Then if $e$ denotes the unique solution of~\eqref{eqn: weak stefan} and $e_h$ denotes the unique solution of~\eqref{eqn: semidiscrete stefan} one has
    \begin{multline}
        \|e - e_h^\ell\|_{H^{-1}(\Omega(T))}^2 + 2C_\beta \int_0^T \|\mathcal{U}(e) - \mathcal{U}(e_h^\ell)\|_{L^2(\Omega(t))}^2\\
        \leq \widehat{C}\left(h + \|e_0 - \Ph e_h^0\|_{-1,0}^2 + \int_0^T \|f - f_h^\ell\|_{L^2(\Omega(t))}^2 \right), \label{eqn: error theorem bound}
    \end{multline}
    where $\mathcal{P}_h$ is as defined in Definition~\ref{defn: new L2 projection}, and $\widehat{C}$ is a constant independent of $h$, but depending on $T$, $e$, $C_\mathcal{U}$, and $C_\beta$.
\end{theorem}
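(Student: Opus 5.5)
We work from the error identity \eqref{eqn: error pf4} for $\Eh = e - \Ph e_h$, which has zero mean on $\Omega(t)$, so $\G\Eh$ is well defined. The argument has three stages: turn the left-hand side of \eqref{eqn: error pf4} into an energy inequality for $\|\Eh\|_{-1,t}$, absorb the consistency errors of Lemma~\ref{lemma: consistency errors} using the a priori bounds of Lemma~\ref{lemma: semidiscrete stability}, and then pass from $\Ph e_h$ to $e_h^\ell$ with Lemma~\ref{lemma: new L2 projection bounds}.

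\textbf{Energy inequality.} For the time-derivative term we use Lemma~\ref{lemma: inverse laplacian time derivative}, which places $\G\Eh$ in $H^1_{H^1}$; Lemma~\ref{lemma: new L2 projection time derivative} ensures $\Eh\in H^1_{H^{-1}}$. Differentiating $\|\Eh\|_{-1,t}^2 = m(\Eh,\G\Eh) = a(\G\Eh,\G\Eh)$ with both parts of Lemma~\ref{lemma: transport theorem} gives
\[ m_*(\matdev \Eh, \G\Eh) + g(\Eh,\G\Eh) = \tfrac12 \ddt{}\|\Eh\|_{-1,t}^2 + \tfrac12 b(\G\Eh,\G\Eh), \]
by the same computation as in the proof of Lemma~\ref{lemma: semidiscrete cts dependence}, now at the continuous level. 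Smoothness of $\mbf{V}$ gives $|b(\G\Eh,\G\Eh)| \le C\|\Eh\|_{-1,t}^2$. For the nonlinear term, \eqref{eqn: beta strong monotonicity} yields
\[ m(\mathcal{U}(e)-\mathcal{U}(\Ph e_h),\Eh) \ge C_\beta \|\mathcal{U}(e)-\mathcal{U}(\Ph e_h)\|_{L^2(\Omega(t))}^2. \]

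\textbf{Absorbing the consistency errors.} Integrating over $(0,T)$, we bound each $I_n$ by Lemma~\ref{lemma: consistency errors}. The key observation is that every factor other than $h$ is already uniformly bounded in $h$:
\begin{itemize}
\item $\|\G\Eh\|_{H^1}$ and $\|\Gh^\ell\Eh\|_{H^1}$ are bounded by $C\|\Eh\|_{L^2}$, using Lemma~\ref{lemma: inverse laplacian error} and the Poincar\'e inequality;
\item $\|\Eh\|_{L^2}\le \|e\|_{L^2}+C\|e_h\|_{L^2}$ by \eqref{eqn: new L2 projection bound};
\item $\|e_h\|_{L^2}$ and $\int_0^T\|\mathcal{U}(e_h)\|_{H^1}^2$ are controlled by Lemma~\ref{lemma: semidiscrete stability} and Assumption~\ref{assumption: data}, using $\|\mathcal{U}(e_h)\|_{L^2}\le C(1+\|e_h\|_{L^2})$ from the Lipschitz property.
\end{itemize}
With these, $I_1,\dots,I_7$ integrate to at most $Ch$, where $C$ depends on $\|e\|_{L^\infty_{L^2}}$, $e_0$, $f$ and $C_\mathcal{U}$. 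The dominant terms are $I_4$ and $I_6$, which are only $O(h)$ because they carry no further smallness. For $I_8$ we apply Young's inequality: the term $\|f-f_h^\ell\|\,\|\Eh\|_{-1,t}$ splits into $\tfrac12\|f-f_h^\ell\|^2+\tfrac12\|\Eh\|_{-1,t}^2$, and the $h^2$ part is $O(h^2)$. This gives
\[ \|\Eh(T)\|_{-1,T}^2 + 2C_\beta\int_0^T\|\mathcal{U}(e)-\mathcal{U}(\Ph e_h)\|^2 \le Ch + \|\Eh(0)\|_{-1,0}^2 + \int_0^T\|f-f_h^\ell\|^2 + C\int_0^T\|\Eh\|_{-1,t}^2, \]
and Gr\"onwall's inequality removes the last term.

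\textbf{Passing from $\Ph e_h$ to $e_h^\ell$.} By the Lipschitz continuity of $\mathcal{U}$ and \eqref{eqn: new L2 projection error},
\[ \|\mathcal{U}(\Ph e_h)-\mathcal{U}(e_h^\ell)\|_{L^2} \le C_\mathcal{U} C h^2\|e_h\|_{L^2}, \]
which contributes $O(h^4)$ to the $L^2_{L^2}$ term. For the $H^{-1}$ term we write $e-e_h^\ell = \Eh + (\Ph e_h - e_h^\ell)$. On the mean-zero part, $\|\cdot\|_{H^{-1}}\le C\|\cdot\|_{-1,t}$. The remainder $\Ph e_h - e_h^\ell$ is $O(h^2)$ in $L^2$, hence also in $H^{-1}$.

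\textbf{Main obstacle.} The delicate step is making the energy identity for $\Eh$ rigorous given its limited regularity: $\matdev\Ph e_h$ exists only as the functional constructed in Lemma~\ref{lemma: new L2 projection time derivative}. We handle this by approximating $\G\Eh$ through the dense embedding $H^1_{H^1}\hookrightarrow L^2_{H^1}$ and passing to the limit in the transport identity. A secondary point is checking that the bound on $I_6$ has the time-integrated form stated in Lemma~\ref{lemma: consistency errors}, so that it produces the $h$ term without requiring pointwise-in-time control of $\matdev_h e_h$.
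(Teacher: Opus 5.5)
Your proposal is correct and follows essentially the same route as the paper: the transport-theorem identity $m_*(\matdev \Eh,\G\Eh)+g(\Eh,\G\Eh)=\tfrac12\ddt{}\|\Eh\|_{-1,t}^2+\tfrac12 b(\G\Eh,\G\Eh)$, strong monotonicity, the consistency bounds of Lemma~\ref{lemma: consistency errors} combined with the a priori bounds, Young's inequality and Gr\"onwall, and finally \eqref{eqn: new L2 projection error} to replace $\Ph e_h$ by $e_h^\ell$ in both error terms. Your additional remarks on making the energy identity rigorous and on using the time-integrated form of the $I_6$ bound address points the paper passes over quickly, but they do not change the argument.
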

\begin{proof}
We recall that we have defined~$\Eh \coloneqq e - \Ph e_h$.
We use Lemma~\ref{lemma: transport theorem} on the first two terms in \eqref{eqn: error pf3} to see that
\[ m_*(\matdev \Eh, \mathcal{G}\Eh) + g(\Eh, \mathcal{G}\Eh) = \frac{1}{2} \ddt{} \|\Eh\|_{-1,t}^2 + \frac{1}{2} b(\mathcal{G}\Eh, \mathcal{G}\Eh). \]
By combining this with by using \eqref{eqn: beta strong monotonicity} one finds that \eqref{eqn: error pf4} yields
\begin{align}
    \frac{1}{2} \ddt{} \|\Eh\|_{-1,t}^2 + C_\beta \|\mathcal{U}(e) - \mathcal{U}(\Ph e_h)\|_{L^2(\Omega(t))}^2 \leq -\frac{1}{2} b(\mathcal{G}\Eh, \mathcal{G}\Eh) + \sum_{n=1}^8 |I_n|. \label{eqn: error pf5}
\end{align}
From the smoothness assumptions on $\mbf{V}$ one finds that
\[ |b(\mathcal{G}\Eh, \mathcal{G}\Eh)| \leq C \|\Eh\|_{-1,t}^2, \]
and we use this bound and integrate in time for 
\begin{align}
    \|\Eh\|_{-1,T}^2 + 2C_\beta \int_0^T \|\mathcal{U}(e) - \mathcal{U}(\Ph e_h)\|_{L^2(\Omega(t))}^2 \leq \|\Eh(0)\|_{-1,0}^2 + C \int_0^T \|\Eh\|_{-1,t}^2 + 2\sum_{n=1}^8 \int_0^T |I_n| . \label{eqn: error pf6}
\end{align}
One now uses Lemma~\ref{lemma: consistency errors} and Young's inequality to see that
\begin{align}
    \|\Eh\|_{-1,T}^2 + 2C_\beta \int_0^T \|\mathcal{U}(e) - \mathcal{U}(\Ph e_h)\|_{L^2(\Omega(t))}^2 \leq C\left(h + \|\Eh(0)\|_{-1,0}^2 + \int_0^T \|f - f_h^\ell\|_{L^2(\Omega(t))}^2 \right), \label{eqn: error pf7}
\end{align}
for a constant $C$ independent of $h$, but depending on $T$ and $C_\mathcal{U}$.
The final step is now to replace $\Ph e_h$ with $e_h^\ell$, for which we firstly recall that
\[ \|\Eh\|_{H^{-1}(\Omega(t))} \leq \| \Eh\|_{-1,t}. \]
We use this, and~\eqref{eqn: new L2 projection error}, to compute
\begin{equation}
\begin{aligned}
\|e - e_h^\ell\|_{H^{-1}(\Omega(T))} &= \sup_{\phi \in H^1(\Omega(T))} \frac{m(e - e_h^\ell, \phi)}{\|\phi\|_{H^1(\Omega(T))}} \leq \|\Eh\|_{-1,T} + \sup_{\phi \in H^1(\Omega(T))} \frac{m(\Ph e - e_h^\ell, \phi)}{\|\phi\|_{H^1(\Omega(T))}}\\
& \leq \|\Eh\|_{-1,T}^2 + Ch^2 \|e_h\|_{L^2(\Omega(T))}^2.
\end{aligned}
\label{eqn: error pf8}
\end{equation}
Similarly one can use the Lipschitz continuity of $\mathcal{U}(\cdot)$, and~\eqref{eqn: new L2 projection error} to see that
\begin{align}
    \notag
    \int_{0}^T \|\mathcal{U}(e) - \mathcal{U}(e_h^\ell)\|_{L^2(\Omega(t))}^2 &\leq \int_0^T \|\mathcal{U}(e) - \mathcal{U}(\Ph e_h)\|_{L^2(\Omega(t))}^2 + \int_0^T \|\mathcal{U}(\Ph e_h) - \mathcal{U}(e_h^\ell)\|_{L^2(\Omega(t))}^2\\
    &\leq \int_0^T \|\mathcal{U}(e) - \mathcal{U}(\Ph e_h)\|_{L^2(\Omega(t))}^2 + \widetilde{C}h^2 \int_0^T \| e_h \|_{L^2(\Omega_h(t))}^2
\label{eqn: error pf9}
\end{align}
where $\widetilde{C}$ depends linearly on $C_{\mathcal{U}}$.
The bound~\eqref{eqn: error theorem bound} follows by combining~\eqref{eqn: error pf7},~\eqref{eqn: error pf8} and~\eqref{eqn: error pf9}.
\end{proof}

\begin{remark}
    \label{remark: convergence_order_comparison}
    The error bound we obtain here is of the same order as that in the stationary, flat case, cf.~\cite[Theorem 3.3]{Elliott1987StefanError} and \cite[Corollary 1]{Nochetto1988Degenerate}.
\end{remark}

\section{Numerical examples}
\label{section: numerics}
\subsection{Numerical methods}
In this section we consider the discretisation in time of \eqref{eqn: semidiscrete stefan} by a backward Euler time discretisation, similar to that of~\cite{Dziuk2012FullyDiscreteESFEM}.
In the following we consider a uniform timestep size $\tau = \frac{T}{N_T}$ for some $N_T \in \mbb{N}$.
We also introduce some shorthand notation for the current time, $t_n \coloneqq n \tau$, and the space of finite element functions at time $t_n$, $S_h^n \coloneqq S_h(t_n)$.
The fully discrete problem is now as follows:
Given data $e_h^{n-1} \in S_h^{n-1}$ and $f_h^n \in S_h^n$, find $e_h^n \in S_h^n$ such that
\begin{align}
	\frac{1}{\tau} \left(m_h(t_n; e_h^n, \phi_h^n) - m_h(t_{n-1}; e_h^{n-1}, \underline{\phi_h^n})\right) + a_h(t_n;\mathcal{U}(e_h^n), \phi_h^n) = m_h(t_n; f_h^n, \phi_h^n), \label{eqn: fullydiscrete stefan}
\end{align}
for all $\phi_h^n \in S_h^n$.
Here $\underline{\phi_h^n} \in S_h^{n-1}$ denotes the vector with the same nodal values as $\phi_h^n \in S_h^n$ but defined over the previous surface.
We do not analyse this fully discrete numerical method, but we expect that the analysis will follow by combining techniques introduced in this present work with those developed for fully discrete ESFEM, cf.~\cite{Dziuk2012FullyDiscreteESFEM,Elliott2025FullyDiscrete,Elliott2026Logarithmic,Kovacs2016,Lubich2013BackwardDifference}.
We now introduce two numerical methods to approximate \eqref{eqn: fullydiscrete stefan}.
\subsubsection{A method using quadrature}
For our first numerical method, we introduce a quadrature rule for the nonlinear term, as we did in Lemma~\ref{lemma: numerical integration comparison}.
In matrix-vector form, the fully discrete scheme \eqref{eqn: fullydiscrete stefan} with a quadrature rule may be written as
\begin{align}
	M^n \codeletter{e}^n + \tau A^n \mathcal{U}(\codeletter{e}^n) = M^{n-1} \codeletter{e}^{n-1} + \tau M^n \codeletter{f}^n. \label{eqn: matrix vector form quadrature}
\end{align}
where $\codeletter{e}^n$ and $\codeletter{f}^n$ denote the vector of nodal values for $e_h^n \in S_h^n$ and $f_h^n \in S_h^n$ respectively, and the mass and stiffness matrices are given by components
\[ M^n_{ij} = m_h(t_n; \phi_i^n, \phi_j^n), \quad A^n_{ij} = a_h(t_n; \phi_i^n, \phi_j^n), \]
for $\phi_i^n$ the `$i$'th basis function of $S_h^n$.
We implement this method in DUNE~\cite{Dedner2010DUNE}, solving the nonlinear problem by using a non-smooth Newton method~\cite{Qi1993NonsmoothNewton} where the corresponding linear problems are solved with an exact solver and our tolerance for the Newton solver is chosen as $10^{-7}$.

\subsubsection{A method using an exact discretisation}
We now consider a new approach to the discretisation of this problem, based on writing~\eqref{eqn: stefan problem2} as
\begin{align*}
	\matdev e + e (\grado \cdot \mbf{V}) - \grado \cdot( \mathcal{U}'(e) \grado e) = f.
\end{align*}

In matrix-vector form this yields a system of the form
\begin{align}
	M^n \codeletter{e}^n + \tau \mathcal{A}(t_n;\codeletter{e}^n)\codeletter{e}^n = M^{n-1} \codeletter{e}^{n-1} + \tau M^n \codeletter{f}^n, \label{eqn: matrix vector form exact}
\end{align}
where we have defined the solution-dependent matrix
\[  \mathcal{A}(t_n;\codeletter{e}^n)_{ij} = \int_{\Omega_h(t_n)} \mathcal{U}'(e_h^n) \gradoh \phi_i \cdot \gradoh \phi_j. \]
To construct this matrix for general functions $e_h^n$, or $\mathcal{U}$ may be challenging.
In the case that $e_h^n$ is piecewise linear, one may divide each of the simplices into smaller regions in which the integration of $\mathcal{U}'(e_h^n)$ follows via simple quadrature.
For example, when considering $\mathcal{U}$ as given in \eqref{eq:classicalU}, a piecewise polynomial function, one divides each of the simplices into the polygons/polyhedra in which  $e_h^n<0$ and $e_h^n\geq1$, in these regions, exact integration is the result of a classical quadrature rule.
This strategy extends to a wide class of functions $\mathcal{U}$ which are piecewise smooth and exact quadrature rules available on each smooth region.
We note that in applications it is typically the case that $\mathcal{U}$ is piecewise linear, cf.~\cite[Chapter 2]{Gupta2003Stefan}.

With this matrix assembled, the problem is solved by a fixed point iteration where we solve sequences of linear problems
\[ M^n \codeletter{e}^{n,k} + \tau \mathcal{A}(t_n;\codeletter{e}^{n,k-1})\codeletter{e}^{n,k} = M^{n-1} \codeletter{e}^{n-1} + \tau M^n \codeletter{f}^n, \]
with an exact solver, provided $\tau$ is sufficiently small.
We choose our initial guess as $\codeletter{e}^{n,0} = \codeletter{e}^{n-1}$, and choose our stopping criteria to be
\[|M^n \codeletter{e}^{n,k} + \tau \mathcal{A}(t_n;\codeletter{e}^{n,k})\codeletter{e}^{n,k} - M^{n-1} \codeletter{e}^{n-1} - \tau M^n \codeletter{f}^n| < \mathsf{tol},\]
where we set the tolerance for our fixed point solver to be $\mathsf{tol} = 10^{-7}$.

This method is somewhat similar to a recent method designed in~\cite{Peters2025FlagUpdates} where the authors solve a linear system determined by a ``flag'' indicating which phase the mesh point is in.
This approach also relies on the piecewise linear nature of $\mathcal{U}$.

Due to the fixed point nature of this algorithm, we found that this approach was slower than the approach using quadrature.
Our implementation of this fixed point method found that, for sufficiently small $\tau$, this method converged within a few iterations.
We leave the topic of more efficient solvers based on this approach for future work. 

\subsection{Examples on an evolving surface}

In this subsection we demonstrate some phenomena exhibited by the evolving surface Stefan problem which arise only on evolving domains.

\subsubsection{Nucleation in the absence of external heat sources}
\label{subsubsection: nucleation}
We now demonstrate a phenomenon which cannot happen on a stationary domain.
In the following we shall assume $f \equiv 0$.
Consider the graph $\beta$ given by
\begin{align}
    \beta(r) \coloneqq \begin{cases}
    \{r\}, & r < 1,\\
    [1,2], & r = 1,\\
    \{ r + 1\}, & r > 1,
\end{cases}
\label{eqn: nucleation_graph}
\end{align}
and initial data $e_0 \in L^\infty(\Omega(0))$ which takes values in $[1,1+\delta]$ for some $\delta \in (0,1)$.
On a stationary domain the maximum principle (see for instance~\cite[Corollary 10.4]{Brezis2011FunctionalAnalysis}) implies that the solution to~\eqref{eqn: stefan problem2} must be valued in $[1,1+\delta]$, and in particular the temperature is $u \equiv 1$ for all time.
However, on an evolving domain the usual maximum principle does not apply and indeed the surface evolution can now force the enthalpy to take arbitrary values in $\mbb{R}^+$.
To illustrate this if one has initial data as above, then testing~\eqref{eqn: weak stefan} with $\phi \equiv 1$ and using Lemma~\ref{lemma: transport theorem} one finds
\[ 0 = m_*(\matdev e ,1) + g(e,1) = \ddt{} \int_{\Omega(t)} e(t).\]
Hence one has that
\[ \int_{\Omega(t)} e(t) = \int_{\Omega(0)} e_0 \quad \text{for a.e. } t \in [0,T], \]
which one may rewrite as
\[ \mval{e(t)}{\Omega(t)} = \frac{|\Omega(0)|}{|\Omega(t)|} \mval{e_0}{\Omega(0)}. \]
In particular if the surface area, $|\Omega(t)|$, decreases enough so that there exists $t^* \in [0,T]$ such that
\[ |\Omega(t^*)| < \frac{1}{2}\int_{\Omega(0)} e_0, \]
then one finds
\[ \mval{e(t^*)}{\Omega(t^*)} = \frac{|\Omega(0)|}{|\Omega(t^*)|} \mval{e_0}{\Omega(0)} > 2. \]
Hence there must exist a region of positive measure on $\Omega(t^*)$ such that $e(t^*) > 2$, meaning that the surface evolution has caused the $\{u > 1\}$ phase to nucleate.
Conversely, if we consider an expanding surface, where there exists some $t^* \in [0,T]$ such that
\[ |\Omega(t^*)| > \int_{\Omega(0)} e_0, \]
then one finds that
\[ \mval{e(t^*)}{\Omega(t^*)} = \frac{|\Omega(0)|}{|\Omega(t^*)|} \mval{e_0}{\Omega(0)} < 1, \]
and hence the $\{u < 1\}$ phase has nucleated.

We demonstrate this phenomena in Figure~\ref{fig: phase_nucleation} in the following numerical example, solving~\eqref{eqn: matrix vector form quadrature}, on an ellipsoid given by the $0$-level set of
\[ \phi(x,y,z;t) = \frac{2x^2}{1 + \exp\left(\frac{3t}{2}\right)} + \frac{y^2}{(2 - \tanh(2t))^2} + z^2 - 1,\]
over a time interval $t \in [0,1]$, with initial data $e_0(x,y,z) \equiv  1.5$.
Here we consider a mesh with $h \approx 0.3878$, and a timestep size $\tau = 2\cdot 10^{-4}$.
The surface evolution here has no tangential component, and hence the nucleation of phases is due to geometric motion of the domain rather than advective effects.

\begin{figure}[ht]
    \centering
    \begin{subfigure}[b]{.8\linewidth}
        \begin{subfigure}[b]{.5\linewidth}
        \centering \includegraphics[width =\linewidth]{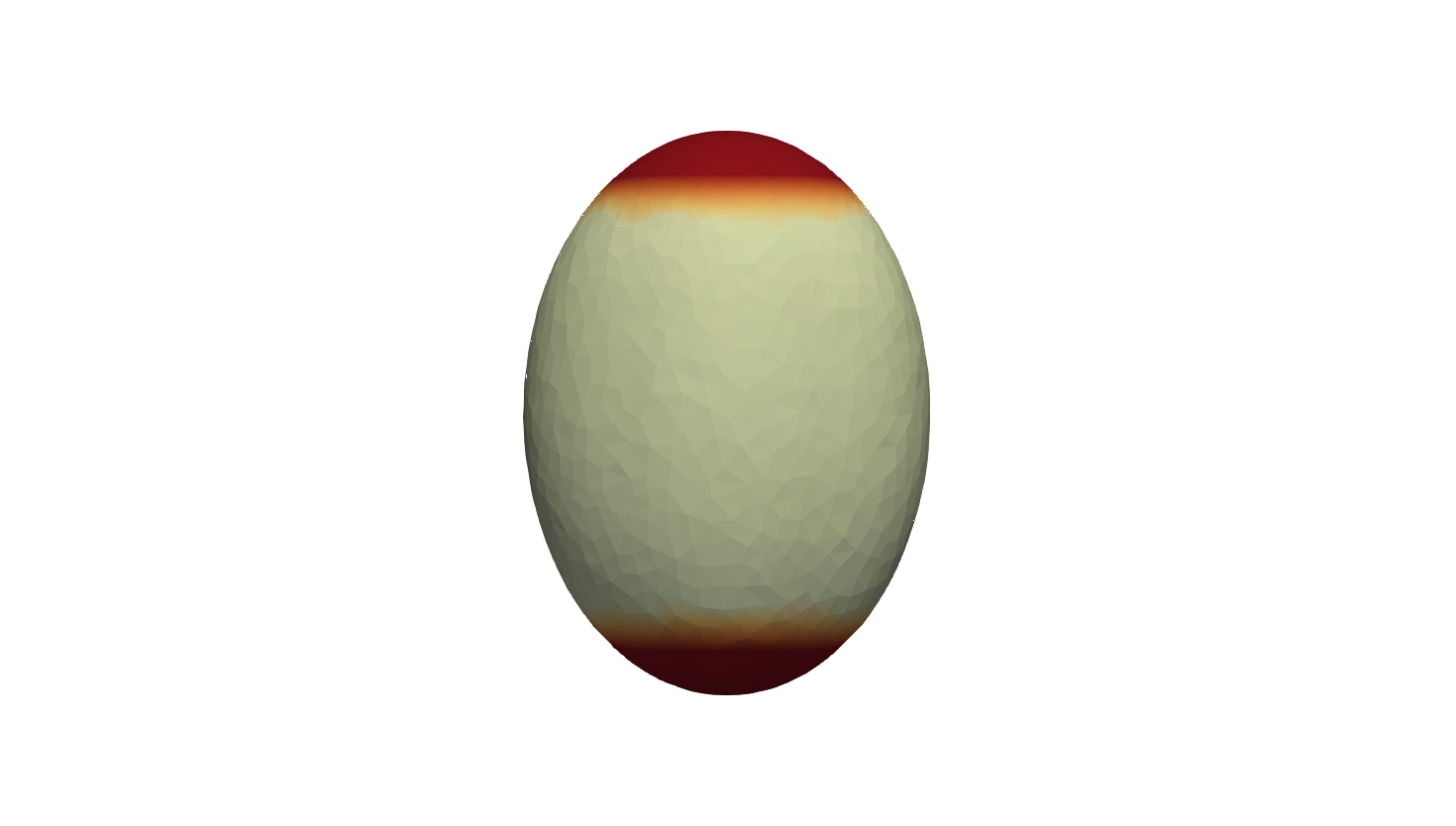}
        \caption{$t=0.25$.}
        \end{subfigure}
        ~
        \begin{subfigure}[b]{.5\linewidth}
        \centering \includegraphics[width =\linewidth]{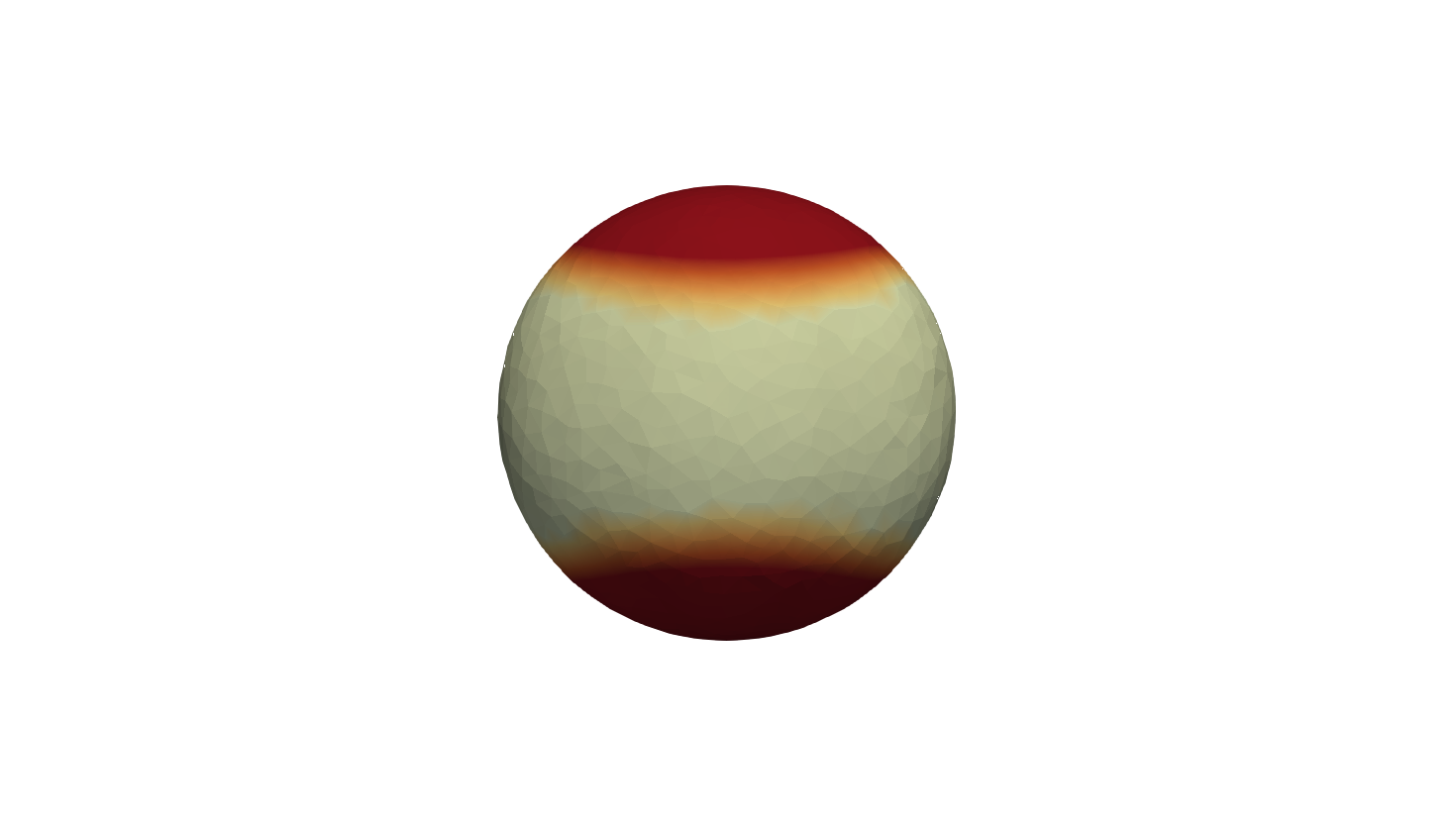}
        \caption{$t=0.5$.}
        \end{subfigure}
        \newline
        \begin{subfigure}[b]{.5\linewidth}
        \centering \includegraphics[width =\linewidth]{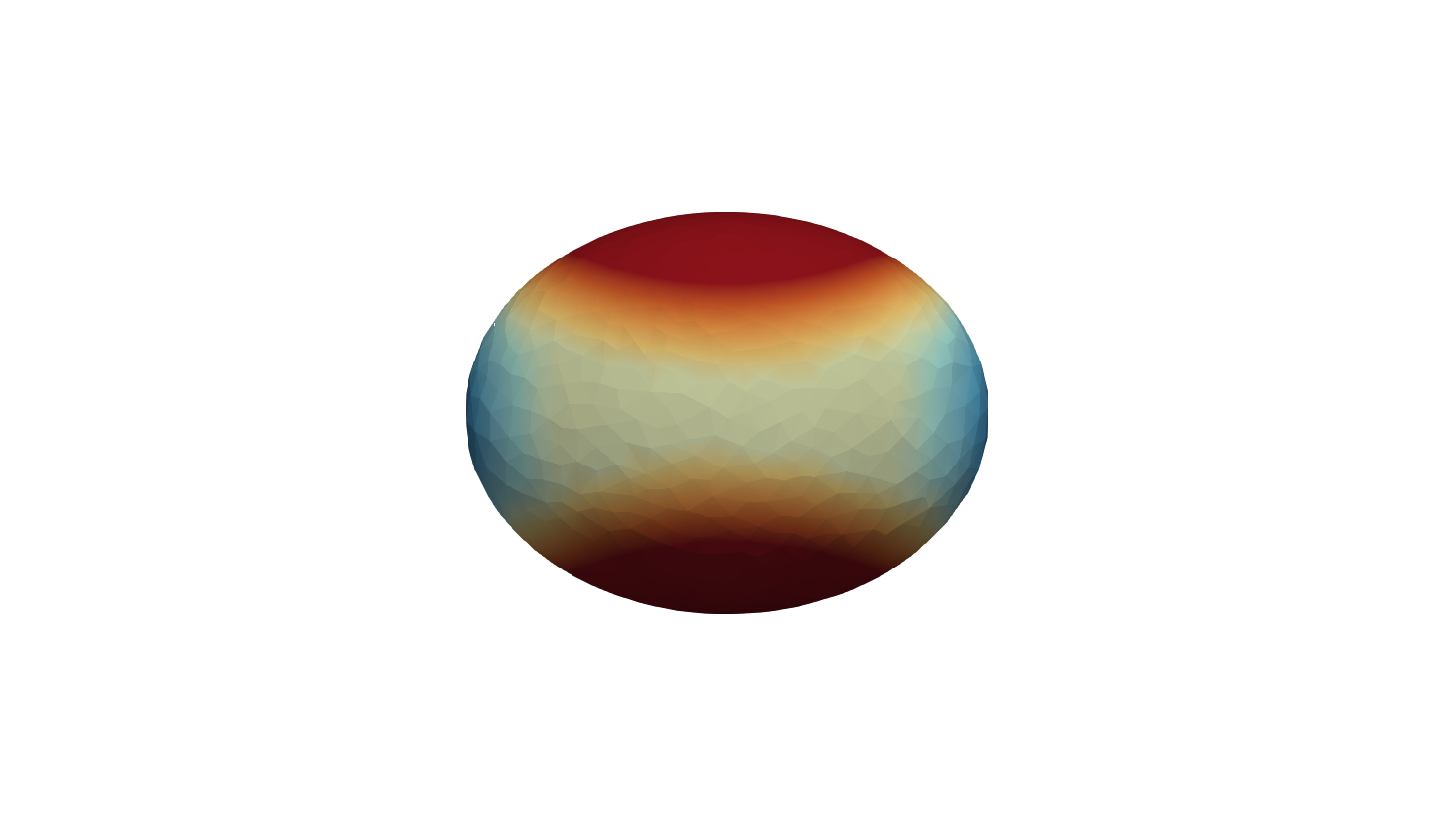}
        \caption{$t=0.75$.}
        \end{subfigure}
        ~
        \begin{subfigure}[b]{.5\linewidth}
        \centering \includegraphics[width =\linewidth]{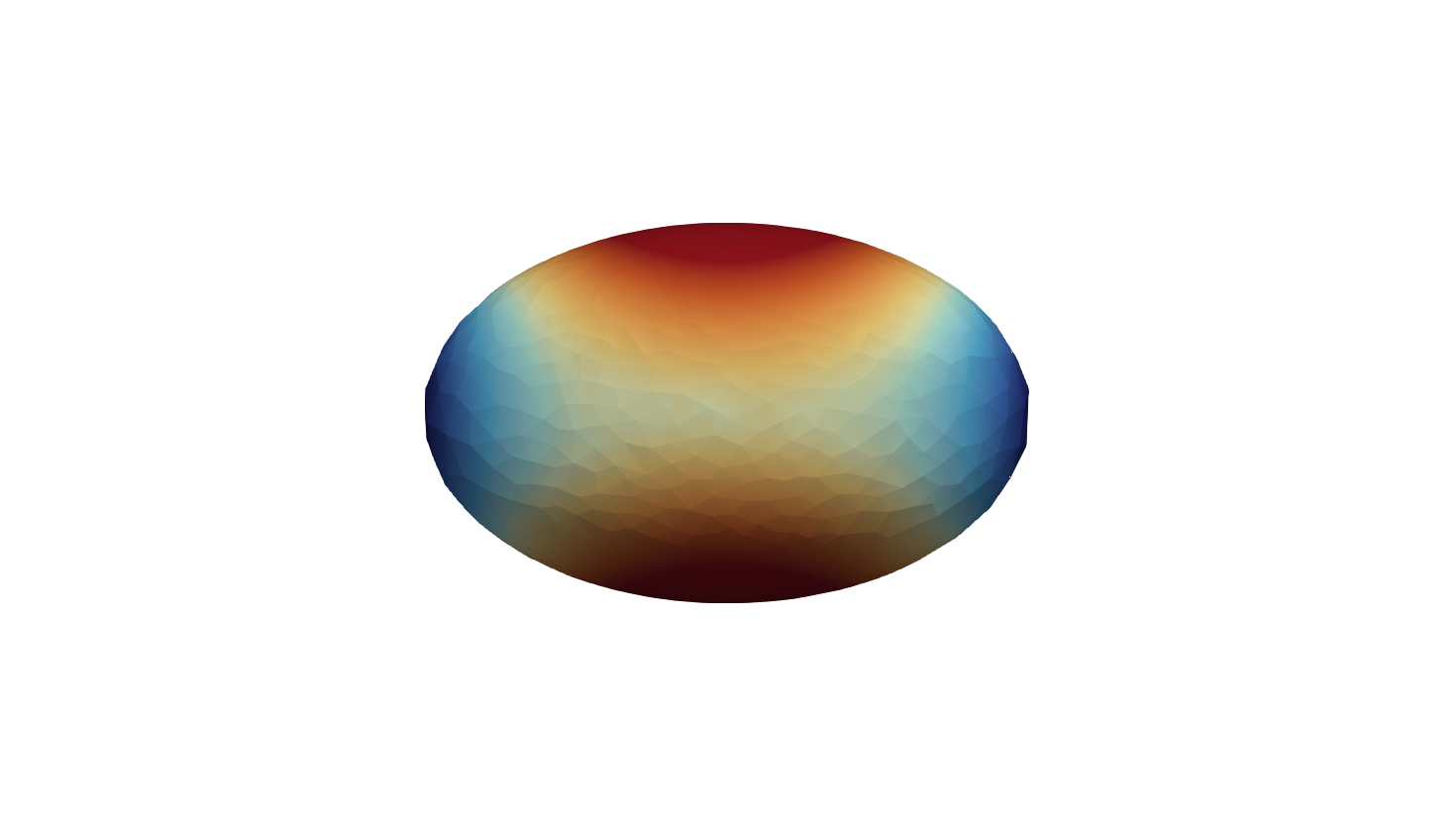}
        \caption{$t=1$.}
        \end{subfigure}
    \end{subfigure}
    ~
    \begin{subfigure}[b]{.15\linewidth}
            \centering
            \includegraphics[width = .5\linewidth]{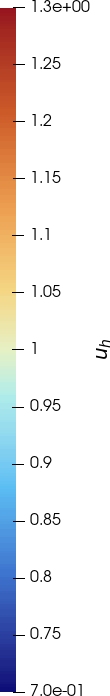}
    \end{subfigure}
    \caption{Plots of the temperature, $\mathcal{U}(e)$, indicating the nucleation of phases in the absence of external heat sources.
    Blue represents a region where the temperature is less than $1$, and red a region where the temperature is greater than $1$.
    Here we consider $\beta$ given by~\eqref{eqn: nucleation_graph}.}
    \label{fig: phase_nucleation}
\end{figure}

\subsubsection{Formation of mushy regions in the absence of external heat sources}
\label{subsubsection: mushy regions}
In the stationary, Euclidean setting it is known~\cite{Elliott1982MovingBoundaryProblems} that one may develop a mushy region in the presence of an external heat source.
Conversely, it is known~\cite{Nochetto1987NondegenerateStefanProblems,Rodrigues1989StefanRevisted}, that in the absence of external heat sources, and sufficiently smooth data, that mushy regions do not spontaneously develop.
However, on an evolving surface it appears to be the case that mushy regions can spontaneously develop even when one does not have an external heat source.
We demonstrate this in Figure~\ref{fig: stefan_mushy}, where we solve~\eqref{eqn: stefan problem2} with $f \equiv 0$, and choose $\beta$ given by~\eqref{eqn: enthalpy example}.
Heuristically this can be explained by thinking of the surface evolution of $\Omega(t)$ acting as an external heat source in some sense.
This implies that the existence of strong solutions, for which there cannot be a mushy region, to the Stefan problem on an evolving surface is quite delicate, and to our knowledge there are indeed no results of this kind.

In Figure~\ref{fig: stefan_mushy} we demonstrate the formation of such a mushy region in the absence of external heat sources.
Here we consider an evolving torus, with major radius $R(t) = 0.75 + 0.75t$ and minor radius $r(t) \equiv 0.25$, and initial data
\[ e_0(x,y,z) = \begin{cases}
    \tanh(10(x-0.4)), & x \leq 0.4,\\
    \tanh(10(x-0.4)) + 1, & x > 0.4.
\end{cases} \]
We consider a mesh with $h \approx 0.0772$ and a timestep size $\tau = 10^{-4}$.

\begin{figure}[ht]
    \centering
        \begin{subfigure}[b]{.8\linewidth}
        \begin{subfigure}[b]{.5\linewidth}
        \centering \includegraphics[width =\linewidth]{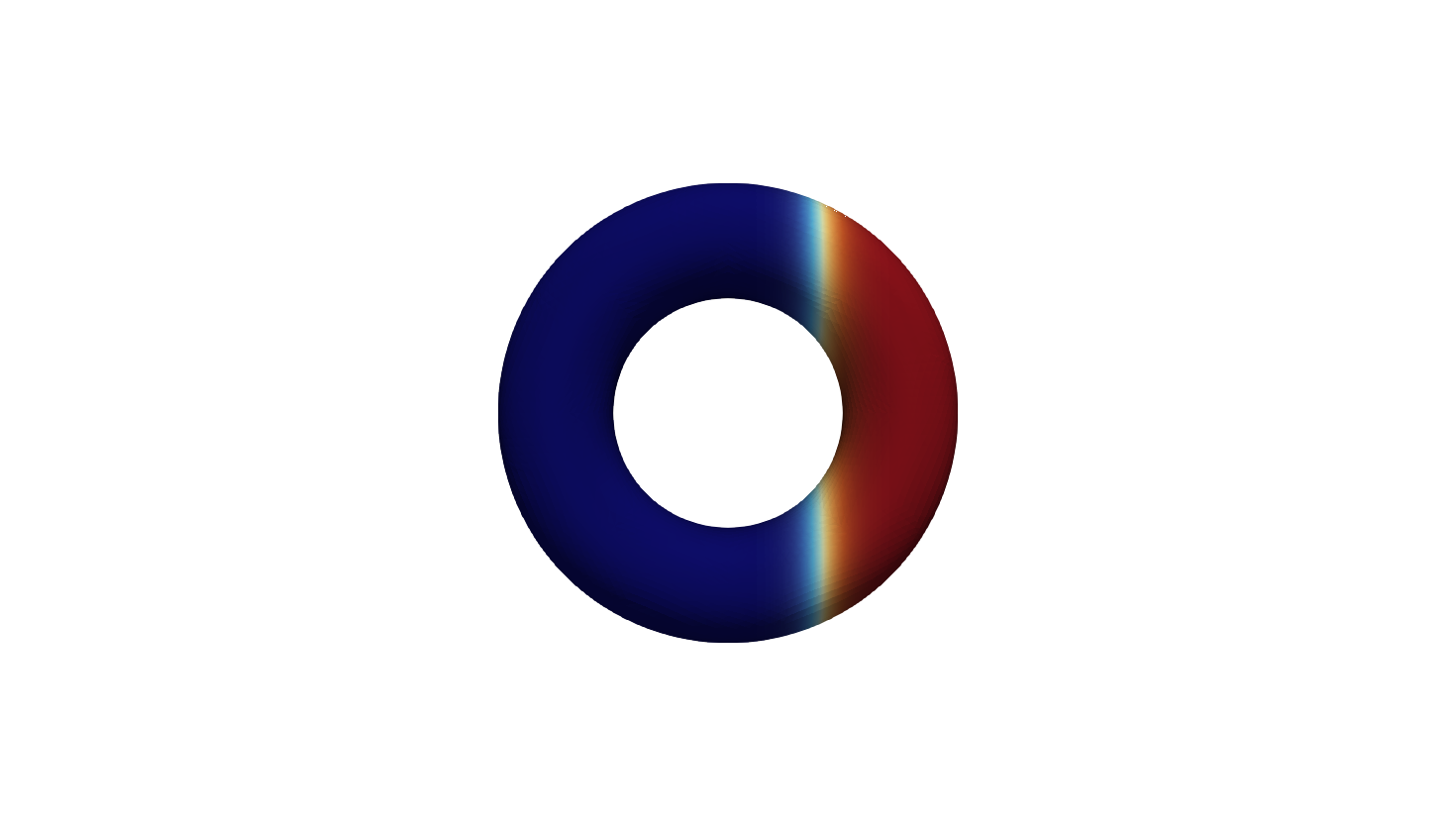}
        \caption{$t=0$.}
        \end{subfigure}
        ~
        \begin{subfigure}[b]{.5\linewidth}
        \centering \includegraphics[width =\linewidth]{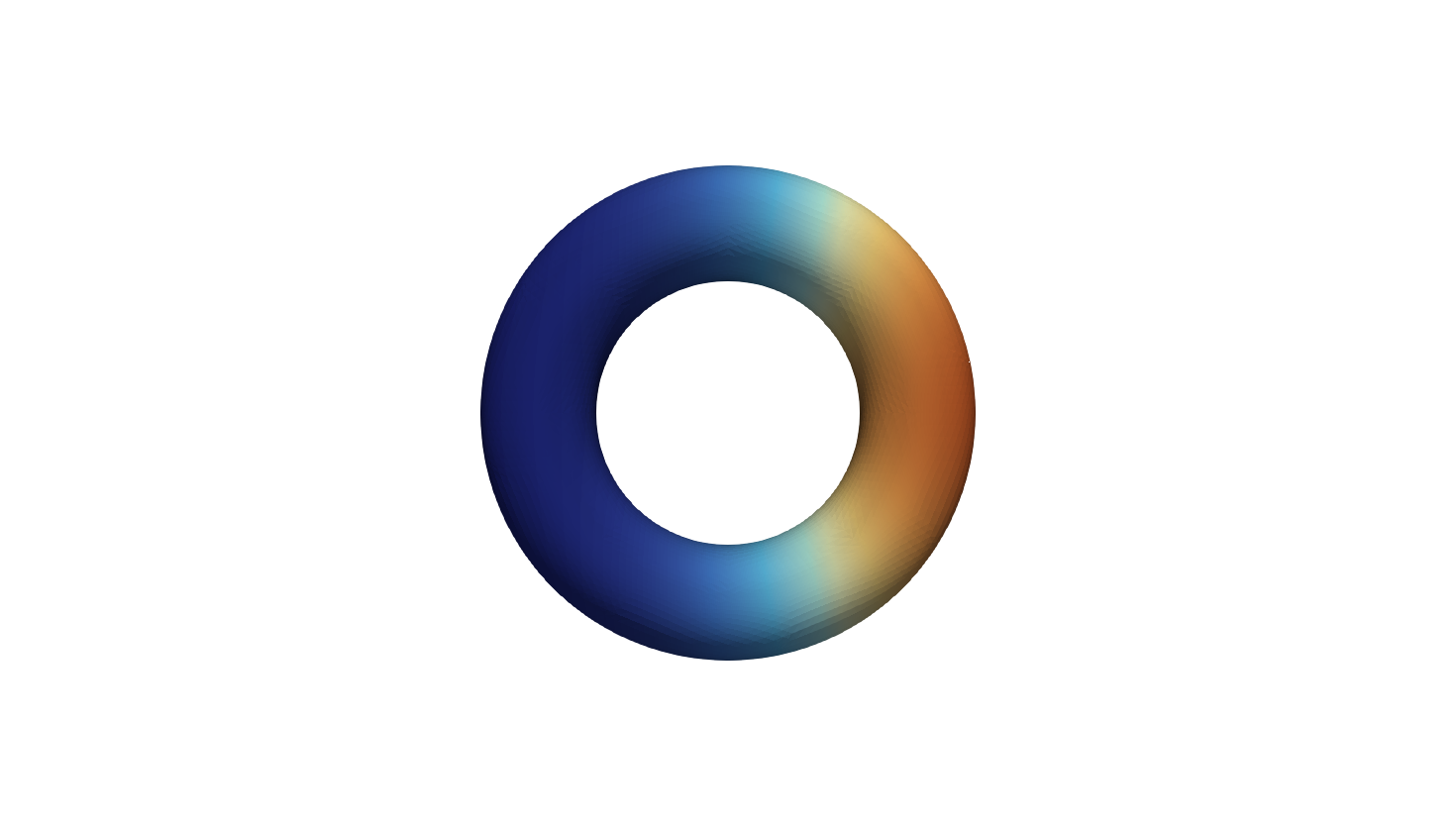}
        \caption{$t=0.1$.}
        \end{subfigure}
        \newline
        \begin{subfigure}[b]{.5\linewidth}
        \centering \includegraphics[width =\linewidth]{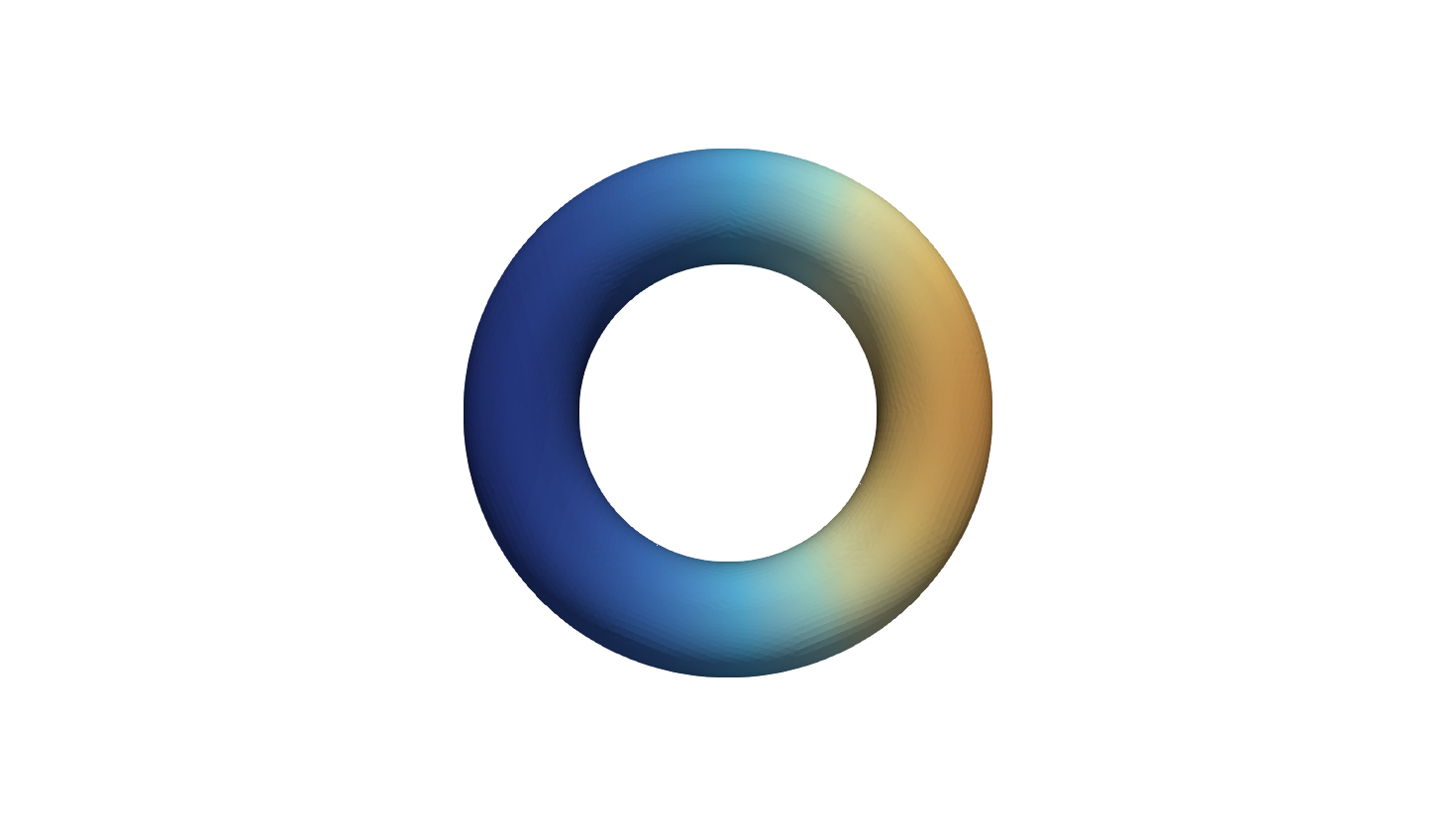}
        \caption{$t=0.2$.}
        \end{subfigure}
        ~
        \begin{subfigure}[b]{.5\linewidth}
        \centering \includegraphics[width =\linewidth]{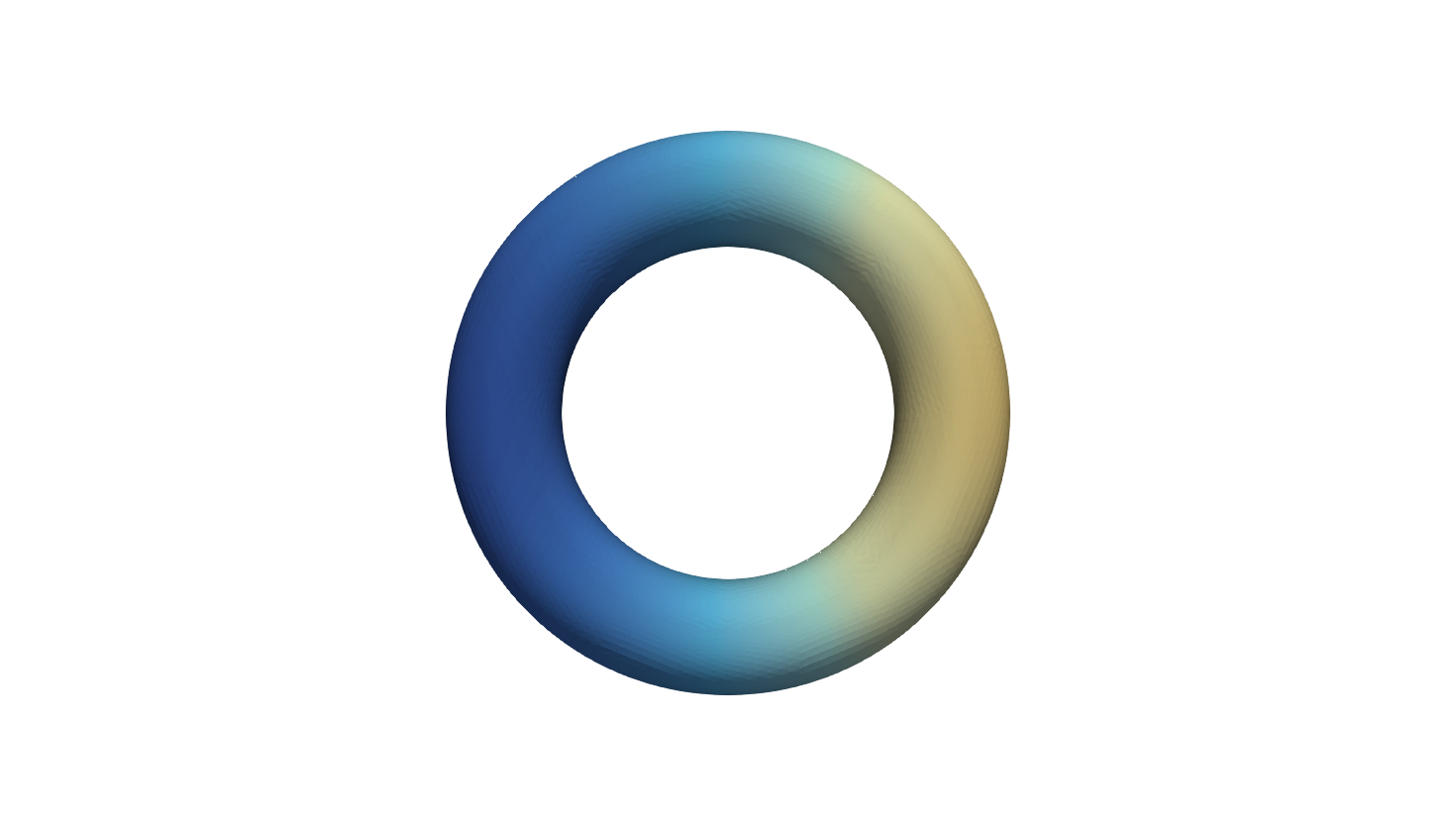}
        \caption{$t=0.3$.}
        \end{subfigure}
        \newline
        \begin{subfigure}[b]{.5\linewidth}
        \centering \includegraphics[width =\linewidth]{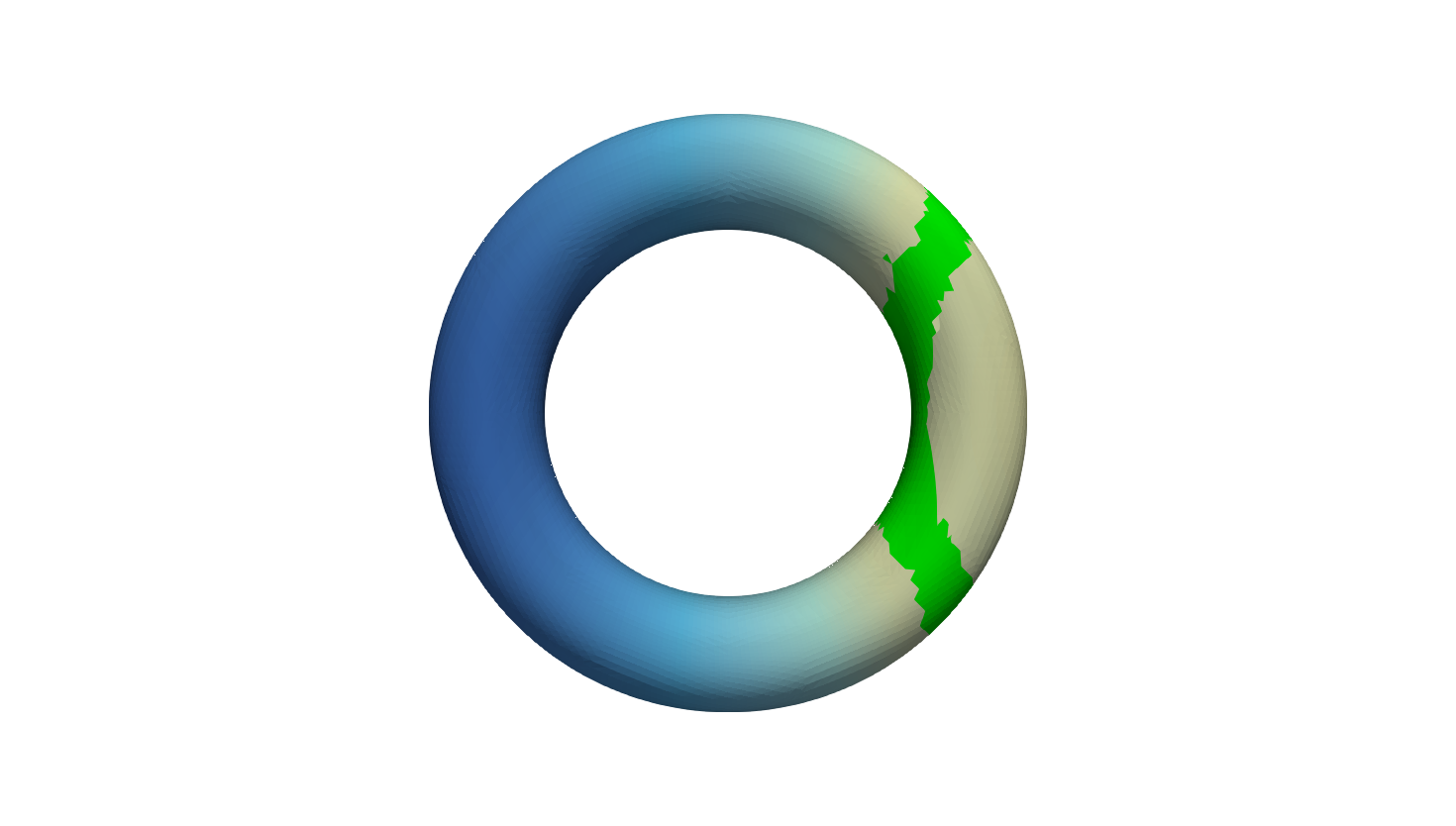}
        \caption{$t=0.4$.}
        \end{subfigure}
        ~
        \begin{subfigure}[b]{.5\linewidth}
        \centering \includegraphics[width =\linewidth]{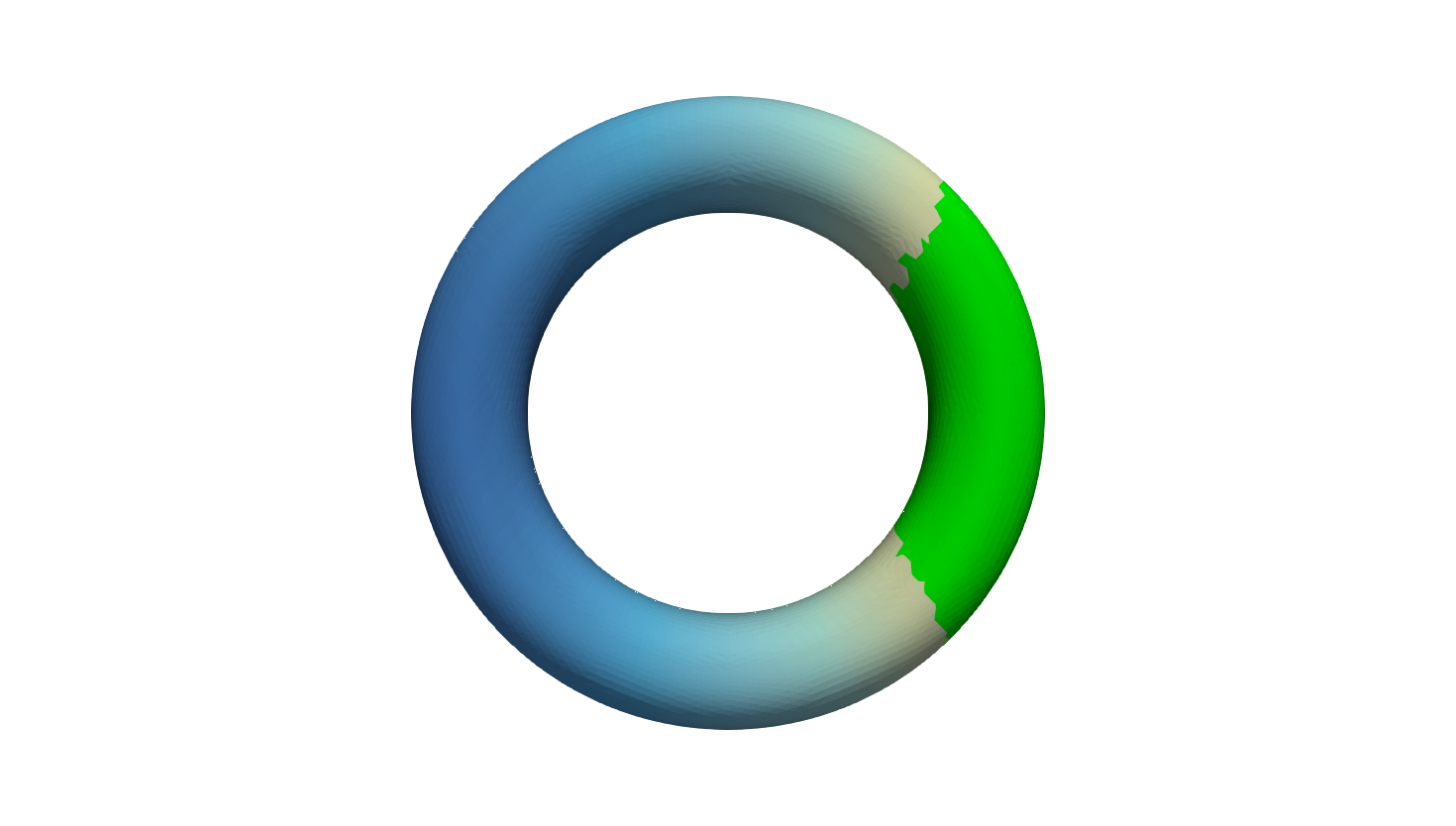}
        \caption{$t=0.5$.}
        \end{subfigure}
        \end{subfigure}
        ~
        \begin{subfigure}[b]{.15\linewidth}
            \centering
            \includegraphics[width = 1.1\linewidth]{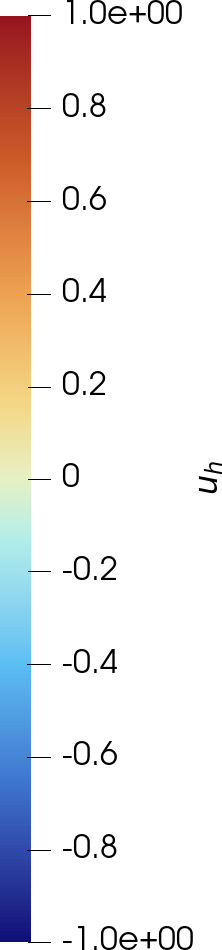}
        \end{subfigure}
    \caption{Plots of the temperature indicating the formation of a mushy region in the absence of external heat sources.
    Here blue represents a region where the temperature, $\mathcal{U}(e)$, is negative, red a region where the temperature is positive, and green a mushy region where enthalpy takes values in $[0,1]$.
    Here we consider $\beta$ given by~\eqref{eqn: enthalpy example}.
    }
    \label{fig: stefan_mushy}
\end{figure}

\subsection{Experimental order of convergence}
In the following we choose our timestep size to be $\tau = \mathcal{O}(h^2)$ so that the error we observe is dominated to the spatial discretisation, as considered in our analysis.
To approximate the $L^2_{L^2}$ norm of a function $z_h$ (with nodal vector $\codeletter{z}^n$ at $t = t_n$) we shall use the approximation
\[ \|z_h\|_{L^2_{L^2}} \approx \left( \sum_{n=1}^{N_T} \tau \|z_h^n\|_{L^2(\Omega_h(t))}^2 \right)^{\frac{1}{2}} = \left( \sum_{n=1}^{N_T} \tau \codeletter{z}^n \cdot M^n \codeletter{z}^n \right)^{\frac{1}{2}}.  \]
Similarly we approximate the $L^{\infty}_{H^{-1}}$ norm via
\[ \|z_h\|_{L^{\infty}_{H^{-1}}}^2 \approx \max_{n=1,\ldots,N_T} \left(\codeletter{z}^n \cdot M^n (M^n + A^n)^{-1}M^n \codeletter{z}^n  \right).\]
This right-hand expression can be viewed as a finite element approximation $m(t_n;z,\widetilde{\mathcal{G}}z) \approx \|z\|_{H^1(t_n)}^2$ for a suitably defined inverse Laplacian operator $\widetilde{\mathcal{G}}$.
One can then justify the use of this approximation by appealing to well-known error bounds for surface finite elements~\cite{Dziuk2013SurfacePDEs} --- we omit such justification.

Our experimental order of convergence (EOC) plots (Figure~\ref{fig: EOC_expanding_sphere} and Figure~\ref{fig: EOC_rotating_sphere}) indicate a higher order of convergence than that predicted by Theorem~\ref{thm: error theorem}.
This has previously been observed in~\cite{Nochetto1987NondegenerateStefanProblems}, where the authors observe $\mathcal{O}(h)$ convergence for the temperature in the $L^2_{L^2}$ norm.
For comparison, in our experiments we observe $\mathcal{O}(h^2)$ convergence for the temperature in the $L^2_{L^2}$ norm --- we expect this is due to our exact solution being quite smooth, as it is difficult to construct rougher closed-form solutions to a surface Stefan problem.

\subsubsection{Stationary interface on a expanding sphere}\label{sec:expSphere}
Here we consider our spatial domain to be an expanding sphere, given by the zero level set of
\[ \phi(x,y,z;t) = x^2 + y^2 + z^2 - e^{4t}, \]
and choose our initial data to be given by the Lagrange interpolant of $e_0$, where
\[ e_0(x,y,z) = \begin{cases}
    P_1(y) + \frac{1}{2}P_3(y), & y \leq 0,\\
    1 + P_1(y) + \frac{1}{2}P_3(y), & y > 0,
\end{cases} \]
where $P_n(\cdot)$ denotes the `$n$'th Legendre polynomial.
For this given choice of initial data, and domain evolution, an exact solution of the Stefan problem is known and constructed in \S\ref{subappendix: varying radius}.

\begin{figure}[ht]
\centering
\makebox[\textwidth][c]{
\begin{subfigure}[b]{.4\textwidth}
\begin{tikzpicture}
\begin{axis}[
    width=\textwidth,
    height=\textwidth,
    scale only axis,
    xmode=log,
    ymode=log,
    xlabel={$h$},
    legend pos=south east,
    grid=both,
    log basis x=2,
    log basis y=10,
]
\addplot[
    mark=*,
    blue,
] coordinates {
    (0.139,0.00024475)
    (0.286,0.00098834)
    (0.5245,0.0036786)
    (1.0249,0.012029)
};
\addlegendentry{$\|u-u_h^\ell\|_{L^2_{L^2}}$};
\addplot[
    mark=diamond*,
    red,
] coordinates {
    (0.139,0.21693 )
    (0.286,0.41737)
    (0.5245,0.78022)
    (1.0249,1.3571)
};
\addlegendentry{$\|e-e_h^\ell\|_{L^\infty_{H^{-1}}}$};
\addplot[
    black, thick
] coordinates {
    (0.286,0.286)
    (0.572,0.572)
};

\addplot[
    black, thick
] coordinates {
    (0.286,0.002)
    (0.572,0.008)
};
\end{axis}
\node at (3.3,4.5) {$\mathcal{O}(h)$};
\node at (3.3,2.6) {$\mathcal{O}(h^2)$};
\end{tikzpicture}
\caption{Error plots for \eqref{eqn: matrix vector form quadrature} on an expanding sphere.}
\end{subfigure}
\hspace{2cm}%
\begin{subfigure}[b]{.4\textwidth}
\begin{tikzpicture}
\begin{axis}[
    width=\textwidth,
    height=\textwidth,
    scale only axis,
    xmode=log,
    ymode=log,
    xlabel={$h$},
    legend pos=south east,
    grid=both,
    log basis x=2,
    log basis y=10,
]
\addplot[
    mark=*,
    blue,
] coordinates {
    (0.139,0.00075059)
    (0.286,0.0020292)
    (0.5245,0.006165)
    (1.0249,0.015497)
};
\addlegendentry{$\|u-u_h^\ell\|_{L^2_{L^2}}$};
\addplot[
    mark=diamond*,
    red,
] coordinates {
    (0.139,0.21695)
    (0.286,0.41737)
    (0.5245,0.78018)
    (1.0249,1.3563)
};
\addlegendentry{$\|e-e_h^\ell\|_{L^\infty_{H^{-1}}}$};
\addplot[
    black, thick
] coordinates {
    (0.286,0.286)
    (0.572,0.572)
};

\addplot[
    black, thick
] coordinates {
    (0.286,0.004)
    (0.572,0.016)
};
\end{axis}
\node at (3.3,4.4) {$\mathcal{O}(h)$};
\node at (3.3,2.6) {$\mathcal{O}(h^2)$};
\end{tikzpicture}
\caption{Error plots for \eqref{eqn: matrix vector form exact} on an expanding sphere.}
\end{subfigure}
}
\caption{Error plots for the temperature (blue) and enthalpy (red), for both schemes~\eqref{eqn: matrix vector form quadrature} and~\eqref{eqn: matrix vector form exact}, on an expanding sphere as detailed in Section \ref{sec:expSphere}.
Notice that in both cases the error is better than the $\mathcal{O}(\sqrt{h})$ error predicted by Theorem~\ref{thm: error theorem}.
\label{fig: EOC_expanding_sphere}}
\end{figure}

\subsubsection{Rotating sphere}\label{sec:rotSphere}

Here we demonstrate experimental order of convergence results using the rotating sphere solution constructed in \S\ref{subappendix: rotating sphere}.
We consider the evolution of the unit sphere under the map $\Phi$ given by
\[\Phi(\mbf{x};t) = \begin{pmatrix} 1 & 0 & 0\\
0 & \cos(t) & \sin(t)\\
0 & -\sin(t) & \cos(t)
\end{pmatrix} \mbf{x}.\]
Here we choose our initial data to be given by the Lagrange interpolant of $e_0$, where
\[ e_0(x,y,z) = \begin{cases}
    P_1(y) + \frac{1}{2}P_3(y), & y \leq 0,\\
    1 + P_1(y) + \frac{1}{2}P_3(y), & y > 0,
\end{cases} \]
where $P_n(\cdot)$ denotes the `$n$'th Legendre polynomial.

\begin{figure}[ht]
\centering
\makebox[\textwidth][c]{
\begin{subfigure}[b]{.4\textwidth}
\begin{tikzpicture}
\begin{axis}[
    width=\textwidth,
    height=\textwidth,
    scale only axis,
    xmode=log,
    ymode=log,
    xlabel={$h$},
    legend pos=south east,
    grid=both,
    log basis x=2,
    log basis y=10,
]
\addplot[
    mark=*,
    blue,
] coordinates {
    (0.0511,0.00044387)
    (0.1051,0.0017688)
    (0.1927,0.0064622)
    (0.3752,0.020743)
};
\addlegendentry{$\|u-u_h^\ell\|_{L^2_{L^2}}$};
\addplot[
    mark=diamond*,
    red,
] coordinates {
    (0.0511,0.018117)
    (0.1051,0.03713)
    (0.1927,0.090914)
    (0.3752,0.16701)
};
\addlegendentry{$\|e-e_h^\ell\|_{L^\infty_{H^{-1}}}$};
\addplot[
    black, thick
] coordinates {
    (0.1051,0.06)
    (0.2102,0.12)
};

\addplot[
    black, thick
] coordinates {
    (0.1051,0.003)
    (0.2102,0.012)
};
\end{axis}
\node at (2.8,5.2) {$\mathcal{O}(h)$};
\node at (2.8,3) {$\mathcal{O}(h^2)$};
\end{tikzpicture}
\caption{Error plots for \eqref{eqn: matrix vector form quadrature} on a rotating sphere as detailed in Section \ref{sec:rotSphere}.}
\end{subfigure}
\hspace{2cm}%
\begin{subfigure}[b]{.4\textwidth}
\begin{tikzpicture}
\begin{axis}[
    width=\textwidth,
    height=\textwidth,
    scale only axis,
    xmode=log,
    ymode=log,
    xlabel={$h$},
    legend pos=south east,
    grid=both,
    log basis x=2,
    log basis y=10,
]
\addplot[
    mark=*,
    blue,
] coordinates {
    (0.0511,0.00057966)
    (0.1051,0.0020351)
    (0.1927,0.0077256)
    (0.3752,0.021837)
};
\addlegendentry{$\|u-u_h^\ell\|_{L^2_{L^2}}$};
\addplot[
    mark=diamond*,
    red,
] coordinates {
    (0.0511,0.018092)
    (0.1051,0.038086)
    (0.1927,0.090757)
    (0.3752,0.16688)
};
\addlegendentry{$\|e-e_h^\ell\|_{L^\infty_{H^{-1}}}$};
\addplot[
    black, thick
] coordinates {
    (0.1051,0.06)
    (0.2102,0.12)
};

\addplot[
    black, thick
] coordinates {
    (0.1051,0.003)
    (0.2102,0.012)
};
\end{axis}
\node at (2.8,5.2) {$\mathcal{O}(h)$};
\node at (2.8,3) {$\mathcal{O}(h^2)$};
\end{tikzpicture}
\caption{Error plots for \eqref{eqn: matrix vector form exact} on a rotating sphere.}
\end{subfigure}
}
\caption{Error plots for the temperature (blue) and enthalpy (red), for both methods~\eqref{eqn: matrix vector form quadrature} and~\eqref{eqn: matrix vector form exact}, on a rotating sphere.
Notice that in both cases the error is better than the $\mathcal{O}(\sqrt{h})$ error predicted by Theorem~\ref{thm: error theorem}. \label{fig: EOC_rotating_sphere}}
\end{figure}

\subsection*{\textbf{Acknowledgments}}
The authors would like to thank Vanessa Styles and James Van Yperen for their comments on an early draft of this manuscript.
TS is supported by the UK Engineering and Physical Sciences Research Council (Grant number: EP/Z535138/1).
PJH, TS, and CV are all supported by a UK Engineering and Physical Sciences Research Council Mathematical Sciences Small Grant.

\subsection*{\textbf{Conflict of interest}}
There are no conflicts of interest to declare.

\subsection*{\textbf{Data availability}}
The code used for the numerical examples in this paper is available upon reasonable request to the authors.

\appendix
\section{Exact solutions for the Stefan problem on an evolving sphere}
\label{appendix: exact solution}
In this appendix we construct some exact solutions for the Stefan problem on an evolving sphere.
We consider two kinds of evolution: uniform expansion/dilation in the normal direction, and tangential motion given by a uniform rotation.
Note that this latter case corresponds to a Stefan problem with advection posed on a stationary surface.
We also refer the reader to \cite{Garcke2024CrystalGrowth,Ratz2016Benchmark} wherein exact solutions to surface free boundary problems (namely the Mullins--Sekerka problem) are constructed for use as a benchmark.
\subsection{A sphere with varying radius}
\label{subappendix: varying radius}
Here we manufacture a solution for the the strong formulation of the Stefan problem,~\eqref{eqn: strong stefan}, on a shrinking sphere, with a fixed interface at $z=0$, to be used in the experimental order of convergence calculations in Section~\ref{section: numerics}.
Notice that although the calculations in~\cite[Remark 2.12]{Alphonse2015Stefan} do not consider an external heat source, given that our manufactured solution will be chosen such that there are no mushy regions we will be able to verify that it is also a solution of the enthalpy formulation.
We shall consider a ball of radius $\rho(t)$, centred at the origin, which we shall denote as $S^2(\rho(t))$
To construct our solution we will firstly solve the heat equation~\eqref{eqn: strong stefan 1} on the upper hemisphere $S^2(\rho(t)) \cap \{z \geq 0\}$ subject to homogeneous Dirichlet boundary conditions on $S^2(\rho(t)) \cap \{z=0\}$.
For this we pullback~\eqref{eqn: strong stefan 1} onto the unit sphere $S^2(1)$, noting that as we assume evolution is exclusively in the normal direction one finds
\[ \grado \cdot \mbf{V} = HV_N = \frac{2\rho'(t)}{\rho(t)}, \]
where $V_N = \rho'(t)$ is the normal velocity of the sphere and $H = \frac{2}{\rho(t)}$ is (twice) the mean curvature.
By using the definition of the material derivative, and the pullback equation for the Laplace--Beltrami operator~\cite[Lemma 3.4]{Church2020DomainMapping}, one can readily observe that the pullback $\widetilde{u}$ solves
\[ \frac{\partial \widetilde{u}}{\partial t} + 2(\widetilde{u} + 1)\frac{\rho'(t)}{\rho(t)} - \frac{1}{\rho(t)^2} \Delta_{S^2} \widetilde{u} = \widetilde{f}, \]
where $\widetilde{f}$ is the pullback of $f$ onto $S^2(1)$, and $\Delta_{S^2}$ is the Laplace--Beltrami operator on $S^2(1)$.

We now choose $\widetilde{u}$ as
\[ \widetilde{u}(x,y,z;t) = e^{-2t}P_{1}(z) + \frac{1}{2}e^{-12t}P_{3}(z),  \]
where $P_n(\cdot)$ denotes the `$n$'th Legendre polynomial.
It is well-known (see for instance~\cite{Muller1966SphericalHarmonics}) that the eigenfunctions of the Laplace--Beltrami operator on $S^2(1)$ are the spherical harmonics, which includes the family $\{P_n(z)\}_{n \in \mbb{N} \cup \{0\}}$, where one finds that
\[ \Delta_{S^2} P_n(z) = -n(n+1) P_n(z). \]
Moreover, by choosing only odd degree spherical harmonics we also satisfy the Dirichlet boundary condition, owing to the property that $P_{2n+1}(0) = 0$ for all $n \in \mbb{N}$.
As such, we now take $\widetilde{f}$ on the upper hemisphere $S^2(1) \cap \{ z \geq 0\}$ to be given by
\begin{align*}
\widetilde{f}(x,y,z;t) &= (-2e^{-2t}P_{1}(z)-6 e^{-12t}P_{3}(z))\left(1-\frac{1}{\rho(t)^2}\right)+ (2+ 2e^{-2t}P_{1}(z) + e^{-12t}P_{3}(z))\frac{\rho'(t)}{\rho(t)}\\
&=: \widetilde{f}_+(z;t).
\end{align*}

Repeating essentially the same calculations for~\eqref{eqn: strong stefan 2} we find that
\begin{align*}
    \widetilde{f}(x,y,z;t) &= (-2e^{-2t}P_{1}(z)-6 e^{-12t}P_{3}(z))\left(1-\frac{1}{\rho(t)^2}\right)+ (2e^{-2t}P_{1}(z) + e^{-12t}P_{3}(z))\frac{\rho'(t)}{\rho(t)}\\
    & =: \widetilde{f}_-(z;t),
\end{align*}
on the lower hemisphere $S^2(1) \cap \{ z < 0\}$.
All that remains is to push-forward our functions onto $S^2(\rho(t))$, for which we find that
\[u(x,y,z;t) = e^{-2t}P_{1}\left(\frac{z}{\rho(t)}\right) + \frac{1}{2}e^{-12t}P_{3}\left(\frac{z}{\rho(t)}\right),\]
and
\begin{align}
    f(x,y,z;t) = \begin{cases}
    \widetilde{f}_+\left( \frac{z}{\rho(t)}; t \right), & z \geq 0,\\
    \widetilde{f}_-\left( \frac{z}{\rho(t)}; t \right), & z < 0.
\end{cases}
\label{eqn: manufactured solution rhs}
\end{align}
We note that the values $f$ takes on the interface $S^2(\rho(t)) \cap \{z  = 0 \}$ are seemingly irrelevant as the PDEs are defined away from this interface.
Moreover, since the Laplace--Beltrami operator is invariant under isometries, we may choose the interface to be any great circle rather than specifically $\{z = 0\}$.
We shall do this in our examples, where we instead choose the interface to be $\{y = 0\}$, so that the interface is not fitted to our mesh.
In our numerical experiments in Section~\ref{section: numerics} we shall use this manufactured solution with $\rho(t) = e^{2t}$.

\subsection{A rotating sphere}
\label{subappendix: rotating sphere}
As a second benchmark in Section~\ref{section: numerics} we shall consider $\Omega(t)$ to be a rotating unit sphere.
In this case the evolution of the sphere is given by
\[ \mbf{V}(\mbf{x};t) = A \mbf{x}, \quad \forall \mbf{x} \in S^2(1),\]
where $\mbf{A} \in \mbb{R}^{3 \times 3}$ is some given antisymmetric matrix.
It is a straightforward calculation to verify that $\grado \cdot \mbf{V} = 0$, and hence the temperature, $u$, solves the same PDE on both sides of the interface.
We will solve the PDE by pulling back onto $S^2(1)$, as we did above, where we note that
\[ \Phi_{-t}(\lapo u) = \Delta_{S^2} (\Phi_{-t}u), \]
since $\Phi(t) : S^2(1) \rightarrow \Omega(t)$ is an isometry.
Thus, the pullback of~\eqref{eqn: strong stefan 1} is now
\[\frac{\partial \widetilde{u}}{\partial t} - \Delta_{S^2} \widetilde{u} = 0,\]
where $\widetilde{u} = \Phi_{-t}u$ is the pullback of $u$ onto $S^2(1)$.
Again by noting that the functions $P_n(z)$ are eigenfunctions of the Laplace--Beltrami operator on $S^2(1)$ we find that the above PDE is solved by
\[\widetilde{u}(x,y,z;t) = e^{-2t} P_1(z) + \frac{1}{2} e^{-12t} P_3(z),\]
which also satisfies the homogeneous Dirichlet condition on $z = 0$.
Moreover, this is a solution on both sides of the interface, i.e.~$S^2(1) \cap \{ \widetilde{u} < 0\}$ and $S^2(1) \cap \{ \widetilde{u} > 0\}$.
Thus a strong solution to the Stefan problem on $\Omega(t)$ is given by
\[ u(x,y,z;t) = e^{-2t} P_1(\Phi_t z) + \frac{1}{2} e^{-12t} P_3(\Phi_t z),\]
where the free boundary is the curve $\Gamma(t) = S^2(1) \cap \{\Phi_t z = 0\}$.
As above, we may we may choose the initial interface to be any great circle rather than $\{z = 0\}$.

\bibliographystyle{acm}
\bibliography{stefanBib.bib}

\end{document}